\newtheorem{theorem}{Theorem}
\newtheorem{proposition}[theorem]{Proposition}
\newtheorem{lemma}[theorem]{Lemma}
\newtheorem{corollary}[theorem]{Corollary}
\newtheorem*{unmarked-theorem}{Theorem}
\theoremstyle{definition} 
\newtheorem{definition}[theorem]{Definition}
\newtheorem{example}[theorem]{Example}
\newtheorem{remark}[theorem]{Remark}
\renewcommand{\AA}{\mathbb{A}}
\newcommand{\BB}{\mathbb{B}}
\newcommand{\CC}{\mathbb{C}}
\newcommand{\DD}{\mathbb{D}}
\newcommand{\XX}{\mathbb{X}}
\newcommand{\YY}{\mathbb{Y}}
\newcommand{\SPAN}{\mathrm{\mathbb{S}pan}}
\newcommand{\SMULT}{\mathrm{\mathbb{S}Mult}}
\newcommand{\LO}{\mathrm{\mathbb{L}o}}
\newcommand{\SQ}{\mathrm{\mathbb{S}q}}
\newcommand{\MOD}{\mathrm{\mathbb{M}od}}
\newcommand{\PROF}{\mathrm{\mathbb{P}rof}}
\newcommand{\el}{\mathcal{E}\ell}
\newcommand{\A}{\mathcal{A}}
\newcommand{\B}{\mathcal{B}}
\newcommand{\C}{\mathcal{C}}
\newcommand{\D}{\mathcal{D}}
\newcommand{\X}{\mathcal{X}}
\newcommand{\Y}{\mathcal{Y}}
\newcommand{\CAT}{\mathrm{\mathcal{C}AT}}
\newcommand{\Set}{\mathrm{\mathcal{S}et}}
\newcommand{\Cat}{\mathrm{\mathcal{C}at}}
\newcommand{\Lens}{\mathrm{\mathcal{L}ens}}
\newcommand{\RetFun}{\mathrm{\mathcal{R}etFun}}
\newcommand{\DiaLens}{\mathrm{\mathcal{D}iaLens}}
\newcommand{\Dbl}{\mathrm{\mathcal{D}bl}}
\newcommand{\DOpf}{\mathrm{\mathcal{D}Opf}}
\newcommand{\SOpf}{\mathrm{\mathcal{S}Opf}}
\newcommand{\Idx}{\mathrm{\mathcal{I}dx}}
\newcommand{\GlobCone}{\mathrm{\mathcal{G}lobCone}}
\DeclareMathOperator{\El}{El}
\newcommand{\two}{\mathbf{2}}
\DeclareMathOperator{\dom}{dom}
\DeclareMathOperator{\cod}{cod}
\DeclareMathOperator{\id}{id}
\DeclareMathOperator{\op}{op}
\DeclareMathOperator{\obj}{obj}
\DeclareMathOperator{\Dec}{Dec}
\DeclareMathOperator{\lax}{lax}
\def\slashedarrowfill@#1#2#3#4#5{%
$\m@th\thickmuskip0mu\medmuskip\thickmuskip\thinmuskip\thickmuskip
\relax#5#1\mkern-7mu%
\cleaders\hbox{$#5\mkern-2mu#2\mkern-2mu$}\hfill
\mathclap{#3}\mathclap{#2}%
\cleaders\hbox{$#5\mkern-2mu#2\mkern-2mu$}\hfill
\mkern-7mu#4$%
}
\def\rightslashedarrowfill@{%
\slashedarrowfill@\relbar\relbar\mapstochar\rightarrow}
\newcommand\xslashedrightarrow[2][]{%
\ext@arrow 0055{\rightslashedarrowfill@}{#1}{#2}}
\def\leftslashedarrowfill@{%
\slashedarrowfill@\leftarrow\relbar\mapsfromchar\relbar}
\newcommand\xslashedleftarrow[2][]{%
\ext@arrow 0055{\leftslashedarrowfill@}{#1}{#2}}
\newcommand{\xlto}{\xslashedrightarrow}
\newcommand{\lto}{\xlto{}}
\begin{document}

\title[]{The Grothendieck construction for delta lenses}
\author{Bryce Clarke}
\address{Tallinn University of Technology, Tallinn, Estonia}
\email{\href{mailto:bryce.clarke@taltech.ee}{bryce.clarke@taltech.ee}}
\urladdr{\href{https://bryceclarke.github.io}{bryceclarke.github.io}}
\subjclass[2020]{18A25, 18B10, 18D30, 18N10}
\keywords{Grothendieck construction, category of elements, delta lens, double category, split multivalued function, lax double functor}
\date{}

\begin{abstract}
Delta lenses are functors equipped with a functorial choice of lifts, generalising the notion of split opfibration. 
In this paper, we introduce a Grothendieck construction (or category of elements) for delta lenses, thus demonstrating a correspondence between delta lenses and certain lax double functors into the double category of sets, functions, and split multivalued functions. 
We show that the double category of split multivalued functions admits a universal property as a certain kind of limit, and inherits many nice properties from the double category of spans. 
Applications of this construction to the theory of delta lenses are explored in detail.
\end{abstract}

\maketitle
\thispagestyle{empty} 

\section{Introduction}

Delta lenses were introduced in 2011 by Diskin, Xiong, and Czarnecki \cite{DiskinXiongCzarnecki2011} as an algebraic framework for bidirectional transformations \cite{AbouSalehCheneyGibbonsMcKinnaStevens2018, CzarneckiFosterHuLammelSchurrTerwilliger2009}.
In 2013, Johnson and Rosebrugh \cite{JohnsonRosebrugh2013} showed that every split opfibration has an underlying delta lens, thus initiating an ongoing study of delta lenses using category theory. 
The Grothendieck construction \cite{GrothendieckRaynaud1971} is widely recognised as one of the most important concepts in category theory, and establishes a correspondence between split opfibrations and functors into the category $\Cat$ of (small) categories and functors. 
Given the close connection between delta lenses and split opfibrations, it is natural to ask: is there a Grothendieck construction for delta lenses?  

The central contribution of this work is a correspondence, as displayed below, between delta lenses and certain lax double functors into the double category of sets, functions, and split multivalued functions.
\begin{center}
    delta lenses $\A \to \B$ 
    \qquad $\leftrightsquigarrow$ \qquad
    lax double functors $\LO(\B) \to \SMULT$
\end{center}
This correspondence extends to an equivalence of categories (Theorem~\ref{theorem:main}).

\subsection{Overview of the main result}
\label{subsec:overview}

We now provide a concise overview of the main result and introduce the necessary technical details to understand it. 

A \emph{delta lens} is a pair $(f, \varphi)$ consisting of a functor $f \colon \A \to \B$ equipped with a lifting operation that provides, for each object $a \in \A$ and morphism $u \colon fa \to b$ in~$\B$, a morphism $\varphi(a, u) \colon a \to a'$ in $\A$, called a \emph{chosen lift}, such that $f \varphi(a, u) = u$. 
The lifting operation is required to preserve identities and composition of morphisms. 
Let $\Lens$ denote the category whose objects are delta lenses, and whose morphisms $(h, k) \colon (f, \varphi) \to (g, \psi)$ are pairs of functors that commute with the underlying functors and preserve the chosen lifts; that is, $kf = gh$ and $h\varphi(a, u) = \psi(ha, ku)$. 

A \emph{split multivalued function} $A \lto B$ is a diagram of functions as below. 
\begin{equation}
\label{equation:smf-intro}
    \begin{tikzcd}
        A
        & 
        X 
        \arrow[l, two heads, "s", shift left = 2]
        \arrow[from=l, tail, "\sigma", shift left = 2]
        \arrow[r, "t"]
        & 
        B
    \end{tikzcd}
    \qquad \qquad
    s \circ \sigma = 1_{A}
\end{equation}
A lax double functor $F \colon \LO(\B) \to \SMULT$, where $\B$ is a category, is called an \emph{indexed split multivalued function}\footnote{To be more precise, it should be called a \emph{covariant lax $\Cat$-indexed split multivalued function}.}. 
We can unpack this data without mentioning double category theory. 
An indexed split multivalued function $(\B, F)$ consists of a category $\B$, a set $F(x)$ for each object $x \in \B$, and a split multivalued function 
\[
    \begin{tikzcd}
        F(x)
        & 
        F(u) 
        \arrow[l, two heads, "s_{u}", shift left = 2]
        \arrow[from=l, tail, "\sigma_{u}", shift left = 2]
        \arrow[r, "t_{u}"]
        & 
        F(y)
    \end{tikzcd}
    \qquad \qquad
    s_{u} \circ \sigma_{u} = 1_{F(x)}
\]
for each morphism $u \colon x \to y$ in $\B$. 
There is also a function $\mu_{(u, v)} \colon F(u, v) \to F(v \circ u)$ for each composable pair $(u \colon x \to y, v \colon y \to z)$ in $\B$, where $F(u, v)$ is the pullback of $t_{u}$ and $s_{v}$. 
This data is required to satisfy several natural axioms. 

A morphism $(k, \theta) \colon (\B, F) \to (\D, G)$ of indexed split multivalued functions consists of a functor $k \colon \B \to \D$, a function $\theta_{x} \colon F(x) \to G(kx)$ for each object $x \in \B$, and a function $\theta_{u} \colon F(u) \to G(ku)$ for each morphism $u \colon x \to y$ in $\B$. 
This data also satisfies several natural axioms. 
Let $\Idx(\Cat, \SMULT)$ denote the category of indexed split multivalued functions and morphisms between them. 

\begin{unmarked-theorem}
    There is an equivalence of categories 
    \[
        \el \colon \Idx(\Cat, \SMULT) \xrightarrow{\hphantom{-}\simeq{\hphantom{-}}} \Lens
    \] 
    between the categories of indexed split multivalued functions and delta lenses. 
\end{unmarked-theorem}

The functor $\el \colon \Idx(\Cat, \SMULT) \to \Lens$ describes the \emph{Grothendieck construction} for delta lenses, or alternatively, the \emph{category of elements} of an indexed split multivalued function. 

\subsection{A proof sketch}
\label{sec:proof-sketch}
It is not too difficult to see why this result should be true. 

Given an indexed split multivalued function $(\B, F)$, we may construct a category $\el(\B, F)$ whose objects are pairs $(x \in \B, a \in F(x))$ and morphisms are pairs $(u \colon x \to y \in \B, \alpha \in F(u)) \colon (x, s_{u}(\alpha)) \to (y, t_{u}(\alpha))$.
The identity on an object $(x, a)$ is the morphism $(1_{x}, \sigma_{1_{x}}(a))$, and composition of morphisms is determined by the functions $\mu_{(u, v)}$. 
There is a functor $\pi \colon \el(\B, F) \to \B$ given by projection in the first component, and this admits a delta lens structure, since for each object $(x, a) \in \el(\B, F)$ and morphism $u \colon x \to y$ in $\B$, there is a chosen lift $(u, \sigma_{u}(a)) \colon (x, a) \to (y, t_{u}\sigma_{u}(a))$. 

Conversely, given a delta lens $(f, \varphi) \colon \A \to \B$, we may construct an indexed split multivalued function $(\B, F)$ consisting of the fibre sets $F(x) = \{ a \in \A \mid fa = x \}$ and $F(u) = \{ \alpha \colon a \to a' \mid f\alpha = u \}$ for each object $x \in \B$ and morphism $u \colon x \to y$ in~$\B$. 
The functions $s_{u}$, $t_{u}$ and $\mu_{(u, v)}$ are given by the source, target, and composition operations of $\A$, while $\sigma_{u} \colon F(x) \rightarrowtail F(u)$ is determined by the lifting operation $a \mapsto \varphi(a, u)$ of the delta lens.

The key insight here is that the lifting operation $\varphi$ of a delta lens is the same as a choice of section $\sigma_{u}$ of the ``source leg'' of an indexed span $F(x) \leftarrow F(u) \rightarrow F(y)$, such that these choices  respect the identities and composition in $\B$. 
If we put aside this data, we find the well-known correspondence between functors and certain lax functors into the double category of sets, functions, and spans \cite{AhrensLumsdaine2019,Pare2011,PavlovicAbramsky1997}.
\[
    \Idx(\Cat, \SPAN) \simeq \Cat^{\two}
\]

Rather than proceeding with our proof sketch to establish the equivalence between the categories of indexed split multivalued functions and delta lenses, which would involve checking many tedious details, we instead leverage the equivalence above construct an abstract proof. 
The focus is to develop a clear understanding of the Grothendieck construction for delta lenses using double category theory. 

\subsection{Outline and contributions}
We now provide an outline of the paper and details of the significant contributions and results. 

In Section~\ref{sec:delta-lenses}, we construct an equivalence $\Lens \simeq \DiaLens$ between the categories of delta lenses and diagrammatic delta lenses. 
A \emph{diagrammatic delta lens} $(f, p)$ is simply a commuting diagram of functors 
\begin{equation*}
    \begin{tikzcd}[column sep = small]
        \X && \A \\
        & \B
        \arrow["p", from=1-1, to=1-3]
        \arrow["fp"', from=1-1, to=2-2]
        \arrow["f", from=1-3, to=2-2]
    \end{tikzcd}
\end{equation*}
such that $p$ is \emph{identity-on-objects} and $fp$ is a \emph{discrete opfibration}. 
The correspondence between delta lenses and diagrammatic delta lenses was introduced by the author in 2019 (see \cite[Corollary~20]{Clarke2020} and \cite[Proposition~3.7/3.8]{Clarke2020b}), however an explicit statement of the equivalence did not appear until much later \cite[Proposition~2.15]{Clarke2024}. 
Unfortunately, despite the importance and usefulness of this result, 
a detailed proof in the literature has remained absent; we now fill this gap in Theorem~\ref{theorem:Lens-DiaLens}. 

In Section~\ref{sec:SMULT}, we introduce the double category $\SMULT$ of sets, functions, and split multivalued functions. 
Closely related to $\SMULT$ is the double category $\SQ(\Set)$ whose cells are commutative squares of functions, and the double category $\SPAN$ of sets, functions, and spans. 
We show that there is a diagram of double functors
\begin{equation*}
    \begin{tikzcd}[column sep = small]
        & \SMULT \\
        {\SQ(\Set)} && \SPAN
        \arrow[""{name=0, anchor=center, inner sep=0}, "{U_{1}}"', from=1-2, to=2-1]
        \arrow[""{name=1, anchor=center, inner sep=0}, "{U_{2}}", from=1-2, to=2-3]
        \arrow["K_{\ast}"', from=2-1, to=2-3]
        \arrow["\sigma", shift right=2, shorten <=10pt, shorten >=10pt, Rightarrow, from=0, to=1]
    \end{tikzcd}
\end{equation*}
where $K_{\ast}$ sends a function $f \colon A \to B$ to its companion span $(1_{A}, A, f) \colon A \lto B$, and a split multivalued function \eqref{equation:smf-intro} is sent to its underlying function $t \circ \sigma \colon A \to B$ by $U_{1}$ and its underlying span $(s, X, t) \colon A \lto B$ by $U_{2}$.
A \emph{globular transformation} between double functors is defined as analogously to that of an \emph{icon} between morphisms of bicategories \cite{Lack2010}, and $\sigma \colon K_{\ast} U_{1} \Rightarrow U_{2}$ is a globular transformation whose component at a split multivalued function \eqref{equation:smf-intro} is given by the following cell in $\SPAN$. 
\begin{equation*}
    \begin{tikzcd}
        A & A & B \\
        A & X & B
        \arrow["{1_{A}}"', from=1-1, to=2-1]
        \arrow["{1_{A}}"', from=1-2, to=1-1]
        \arrow["{t \sigma}", from=1-2, to=1-3]
        \arrow["\sigma", from=1-2, to=2-2]
        \arrow["{1_{B}}", from=1-3, to=2-3]
        \arrow["s", from=2-2, to=2-1]
        \arrow["t"', from=2-2, to=2-3]
    \end{tikzcd}
\end{equation*}

The main contribution in Section~\ref{sec:SMULT} is characterising $\SMULT$ has a certain kind of limit --- the terminal globular cone over $K_{\ast}$ --- and demonstrate that it has both a $1$-dimensional and $2$-dimensional universal property (Theorem~\ref{theorem:1d-universal-property-SMULT} and Theorem~\ref{theorem:2d-universal-property-SMULT}). 
As a corollary, we obtain an isomorphism $\GlobCone(\Dbl, K_{\ast}) \cong \Idx(\Dbl, \SMULT)$ between categories whose objects are globular cones over $K_{\ast} \colon \SQ(\Set) \to \SPAN$ and lax double functors into $\SMULT$, respectively.

There is a fully faithful functor $\LO \colon \Cat \to \Dbl$ which constructs, for each category $\B$, a strict double category $\LO(\B)$ whose objects and loose morphisms are the objects and morphisms of $\B$, and whose tight morphisms and cells are identities. 
Using this embedding we can construct an isomorphism $\GlobCone(\Cat, K_{\ast}) \cong \Idx(\Cat, \SMULT)$, 
yielding a bijection between globular cones and lax double functors as follows.
\begin{equation*}
    \begin{tikzcd}[column sep = small]
        & \LO(\B) \\
        {\SQ(\Set)} && \SPAN
        \arrow[""{name=0, anchor=center, inner sep=0}, "{F_{1}}"', from=1-2, to=2-1]
        \arrow[""{name=1, anchor=center, inner sep=0}, "{F_{2}}", from=1-2, to=2-3]
        \arrow["K_{\ast}"', from=2-1, to=2-3]
        \arrow["\varphi", shift right=2, shorten <=10pt, shorten >=10pt, Rightarrow, from=0, to=1]
    \end{tikzcd}
    \qquad 
    \leftrightsquigarrow
    \qquad
    \begin{tikzcd}
        {\LO(\B)} \\
        \SMULT
        \arrow["F"', from=1-1, to=2-1]
    \end{tikzcd}
\end{equation*}

In Section~\ref{sec:main-theorem}, we demonstrate a correspondence between globular cones as above and diagrammatic delta lenses, then assemble a series of equivalences to prove our main theorem.
As recalled in Subsection~\ref{sec:proof-sketch}, there is a correspondence between functors into $\B$ and lax double functors $\LO(\B) \to \SPAN$. 
We can see that lax double functors $\LO(\B) \to \SPAN$ which factor through $K_{\ast} \colon \SQ(\Set) \to \SPAN$ are the same as functors $\B \to \Set$. 
This observation together with the classical ``category of elements'' construction yields an equivalence $\Idx(\Cat, \SQ(\Set)) \simeq \DOpf$ between categories whose objects are (strict) double functors $\LO(\B) \to \SQ(\Set)$ and \emph{discrete opfibrations}, respectively.
In Lemma~\ref{lemma:globular-to-ioo} we show a correspondence between \emph{identity-on-objects} morphisms in $\Cat / \B$ and globular transformations between lax double functors $\LO(\B) \to \SPAN$. 
Using these facts, we obtain the desired equivalence $\DiaLens \simeq \GlobCone(\Cat, K_{\ast})$ in Theorem~\ref{theorem:DiaLens-GlobCone}.

The central contribution of Section~\ref{sec:main-theorem} and the main result of the paper, Theorem~\ref{theorem:main}, follows immediately from the established sequence of equivalences. 
\[
    \Lens \simeq \DiaLens \simeq \GlobCone(\Cat, K_{\ast}) \cong \Idx(\Cat, \SMULT)
\]

In Section~\ref{sec:examples}, we examine several consequences, examples, and applications of our main result to the theory of delta lenses, including: 
\begin{itemize}
    \item Using adjunctions with $\SMULT$ to recover adjunctions with $\Lens$; 
    \item Characterising split opfibrations as indexed split multivalued functions; 
    \item Showing that delta lenses correspond to normal lax double functors into the double category $\MOD(\SMULT)$ of bimodules;
    \item Examining the pullback and pushforward of delta lenses;
    \item Characterising \emph{retrofunctors} as certain indexed split multivalued functions;
    \item Proving that the functor sending an indexed split multivalued function $(\B, F)$ to its category of elements $\el(\B, F)$ is a left adjoint;
    \item Exhibiting monoidal structures on $\Lens$, and the fibres of the codomain functor $\cod \colon \Lens \to \Cat$, as arising from monoidal structures on $\SMULT$;
    \item Characterising natural classes of delta lenses via factorisations through sub-double categories of $\SMULT$.
\end{itemize}

\subsection{Context and related work}
This work sits at the intersection of research on delta lenses, Grothendieck constructions, and double categories. 

The notion of delta lens was preceded by the concept of \emph{state-based lens}, which consists of a pair of functions $A \to B$ and $A \times B \to A$ satisfying three axioms~\cite{FosterGreenwaldMoorePierceSchmitt2007}.
State-based lenses were later shown to be the same as delta lenses between \emph{codiscrete categories} \cite{JohnsonRosebrugh2016}. 
In their seminal paper \cite{JohnsonRosebrugh2013}, Johnson and Rosebrugh showed that delta lenses capture the underlying structure of split opfibrations, and characterised them as certain algebras for a \emph{semi-monad} (an endofunctor $T$ with an associative multiplication $\mu \colon T^{2} \Rightarrow T$) on the slice category $\Cat / \B$, connecting with the characterisations of state-based lenses \cite{JohnsonRosebrughWood2010} and split opfibrations \cite{Street1974} as algebras.

Delta lenses have typically been treated as \emph{morphisms} between categories due to their applications in computer science. 
The properties of the category of (small) categories and delta lenses have been studied in several places \cite{CholletClarkeJohnsonSongaWangZardini2022,DiMeglio2022,DiMeglio2023}, although these are usually less nice than in $\Cat$. 
Ahman and Uustalu \cite{AhmanUustalu2017} characterised delta lenses as a compatible functor and \emph{retrofunctor} pair. 
Retrofunctors are another natural notion of morphism between categories, introduced under the name \emph{cofunctor} by Aguiar \cite{Aguiar1997}, and arise both as morphisms of monads in $\SPAN$ \cite{Pare2024} and morphisms of polynomial comonads on $\Set$ \cite{AhmanUustalu2016, SpivakGarnerFairbanks2025}.
The link between delta lenses and retrofunctors led to their representation as certain commutative diagrams in $\Cat$, and their relationship with split opfibrations in internal category theory \cite{Clarke2020,Clarke2020b}. 

The treatment of delta lenses as \emph{objects} in a category $\Lens$ has also proved to be important. 
This approach is necessary for the characterisation of delta lenses as retrofunctors with additional \emph{coalgebraic structure} \cite{Clarke2021}, again linking with similar results characterising state-based lenses \cite{GibbonsJohnson2012} and split opfibrations \cite{EmmeneggerMesitiRosoliniStreicher2024} as coalgebras for a comonad. 
Delta lenses were also characterised as algebras for a monad \cite{Clarke2023} on $\Cat^{\two}$, refining the previous result of Johnson and Rosebrugh, and reaffirming the view that they are functors with additional \emph{algebraic structure}. 

Ultimately, delta lenses should be considered as both objects and morphisms, an approach supported by studying the double category of categories, functors, and delta lenses \cite{Clarke2022}.
Like split opfibrations \cite{GrandisTholen2006}, delta lenses are not just algebras for a monad, but the right class of an \emph{algebraic weak factorisation system} \cite{Clarke2024}, naturally inducing double category in which to study them \cite{BourkeGarner2017}. 

There have been numerous variants of the Grothendieck construction introduced for different purposes \cite{Buckley2014, CruttwellLambertPronkSzyld2022, Lambert2021, Manuell2022, MoellerVasilakopoulou2020, Myers2021}, and this paper contributes to the recent interest in double-categorical generalisations of this concept. 
\section{Delta lenses and diagrammatic delta lenses}
\label{sec:delta-lenses}

In this section, we recall the definitions of delta lens, discrete opfibration, and diagrammatic delta lens. 
Our main result is an equivalence $\Lens \simeq \DiaLens$ between the categories of delta lenses and diagrammatic delta lenses (Theorem~\ref{theorem:Lens-DiaLens}). 
We recall the characterisation of split opfibrations as delta lenses, or diagrammatic delta lenses, with a certain property. 

\subsection{Delta lenses and discrete opfibrations}

Recall that a \emph{discrete opfibration} is a functor $f \colon \A \to \B$ such that, for each object $a \in \A$ and morphism $u \colon fa \to b$ in $\B$, there exists a unique morphism $w \colon a \to a'$ in $\A$ such that $fw = u$. 
Delta lenses are a generalisation of discrete opfibrations in which a  functor is equipped with a functorial choice of lifts without the requirement that these lifts are unique.

\begin{definition}
\label{definition:delta-lens}
    A \emph{delta lens} $(f, \varphi) \colon \A \to \B$ is a functor $f \colon \A \to \B$ equipped with a \emph{lifting operation}
    \[
        (a \in \A, u \colon fa \to b \in \B) 
        \quad \longmapsto \quad
        \varphi(a, u) \colon a \to a' \in \A 
    \]
    such that the following axioms hold: 
    \begin{enumerate}[label=(DL\arabic*)]
    \itemsep=1ex
        \item \quad $f\varphi(a, u) = u$; \label{DL1}
        \item \quad $\varphi(a, 1_{fa}) = 1_{a}$; \label{DL2}
        \item \quad $\varphi(a, v \circ u) = \varphi(a', v) \circ \varphi(a, u)$. \label{DL3}
    \end{enumerate}
\end{definition}

The first axiom \ref{DL1} of a delta lens $(f, \varphi)$ allows us to refer to the morphisms $\varphi(a, u)$ as \emph{chosen lifts} with respect to the functor $f$. 
The second axiom \ref{DL2} states that the lifting operation preserves identities, and the third axiom \ref{DL3} states that it preserves composition. 
Using the axioms of a delta lens, one can prove the following simple result. 

\begin{lemma}
\label{lemma:Lambda-category}
    Given a delta lens $(f, \varphi) \colon \A \to \B$, there is a wide subcategory $\Lambda(f, \varphi)$ of $\A$ whose morphisms are the chosen lifts. 
    By definition, the subcategory inclusion $\overline{\varphi} \colon \Lambda(f, \varphi) \to \A$ is faithful and identity-on-objects.
    Furthermore, the composite functor $f \, \overline{\varphi} \colon \Lambda(f, \varphi) \to \B$ is a discrete opfibration. 
\end{lemma}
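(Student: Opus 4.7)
The plan is to define $\Lambda(f, \varphi)$ directly, verify it is a wide subcategory using the delta lens axioms (DL2) and (DL3), and then show the restricted functor is a discrete opfibration using (DL1). Concretely, I would take $\Lambda(f, \varphi)$ to have the same objects as $\A$, and as morphisms precisely those $w \colon a \to a'$ in $\A$ of the form $w = \varphi(a, u)$ for some $u \colon fa \to b$ in $\B$. A useful preliminary observation is that, by (DL1), any such $w$ automatically satisfies $u = fw$, so a chosen lift is uniquely determined by its domain and by its image under $f$.

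To see that $\Lambda(f, \varphi)$ is a wide subcategory of $\A$, note that it contains every object of $\A$ by construction; that identities lie in $\Lambda(f, \varphi)$ is exactly (DL2), via $1_{a} = \varphi(a, 1_{fa})$; and closure under composition is exactly (DL3): given two chosen lifts $w = \varphi(a, u) \colon a \to a'$ and $w' = \varphi(a', v) \colon a' \to a''$, the composite $w' \circ w = \varphi(a, v \circ u)$ is again of the required form. The inclusion $\overline{\varphi}$ is then identity-on-objects by construction and faithful, as any subcategory inclusion is.

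For the final claim, given an object $a$ of $\Lambda(f, \varphi)$ and a morphism $u \colon fa \to b$ in $\B$, existence of a lift over $f\,\overline{\varphi}$ is immediate: the chosen lift $\varphi(a, u)$ lies in $\Lambda(f, \varphi)$ by definition, and (DL1) gives $f\,\overline{\varphi}(\varphi(a, u)) = u$. For uniqueness, any morphism $w$ of $\Lambda(f, \varphi)$ with domain $a$ must, by the preliminary observation, equal $\varphi(a, fw)$; if additionally $fw = u$, then $w = \varphi(a, u)$.

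The whole argument is essentially mechanical once the correct set of morphisms of $\Lambda(f, \varphi)$ is fixed, and the only mildly delicate point, which I would flag as the main obstacle, is the bookkeeping observation that every morphism of $\Lambda(f, \varphi)$ is canonically represented as a chosen lift of its own image under $f$. It is this observation that makes the uniqueness clause of the discrete opfibration condition free, rather than requiring a further appeal to a delta lens axiom; each of (DL1), (DL2), and (DL3) is then used exactly once, in the places indicated.
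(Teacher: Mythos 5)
Your proof is correct and is exactly the argument the paper intends (the paper states this lemma without proof, remarking only that it follows from the delta lens axioms): (DL2) and (DL3) give the wide subcategory, (DL1) gives existence of lifts, and your bookkeeping observation that a chosen lift $w=\varphi(a,u)$ determines $u=fw$ is precisely what yields uniqueness. Nothing is missing.
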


This lemma clarifies the exact relationship between delta lenses and discrete opfibrations. 
Crucially, Lemma~\ref{lemma:Lambda-category} admits a converse which allows us to construct delta lenses from certain commutative triangles of functors.
We first examine the special case of discrete opfibrations. 

\begin{example}
\label{example:discrete-opfibration}
    Each discrete opfibration $f \colon \A \to \B$ admits a unique delta lens structure which we denote $(f, \Upsilon_{f}) \colon \A \to \B$. 
    The lifting operation $\Upsilon_{f}$ is well-defined by the uniqueness of lifts.  
    Furthermore, since every morphism in the domain of a discrete opfibration is chosen lift, it follows that $\Lambda(f, \Upsilon_{f}) =\A$ and that $\overline{\Upsilon}_{f} = 1_{\A}$.
    Given a delta lens $(g, \psi) \colon \A \to \B$, the underlying functor $g$ is a discrete opfibration if $\psi(a, gw) = w$ holds for each morphism $w \colon a \rightarrow a'$ in $\A$; in this case, $\psi = \Upsilon_{g}$. 
\end{example}
    
\begin{proposition}
\label{proposition:delta-lens-from-diagram}
    Given a commutative triangle of functors 
    \begin{equation}
    \label{equation:delta-lens-from-diagram}
        \begin{tikzcd}[column sep=small]
            \X && \A \\
            & \B
            \arrow["p", from=1-1, to=1-3]
            \arrow["fp"', from=1-1, to=2-2]
            \arrow["f", from=1-3, to=2-2]
        \end{tikzcd}
    \end{equation}
    such that $p$ is identity-on-objects and $fp$ is a discrete opfibration, there is a delta lens $(f, \varphi) \colon \A \to \B$ and a unique isomorphism $j \colon \X \cong \Lambda(f, \varphi)$ such that $\overline{\varphi} j = p$. 
\end{proposition}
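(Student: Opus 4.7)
The plan is to extract the delta-lens lifting operation directly from the discrete opfibration $fp$, using that $p$ is the identity on objects, and then identify $\X$ with $\Lambda(f, \varphi)$. Since $p$ fixes objects, every object $a \in \A$ is also an object of $\X$, and each morphism $u \colon fa \to b$ in $\B$ admits a unique morphism $\widetilde{u} \colon a \to a'$ in $\X$ with $(fp)(\widetilde{u}) = u$. I would set $\varphi(a, u) := p(\widetilde{u})$; axiom \ref{DL1} is then immediate from the commutativity of diagram~\eqref{equation:delta-lens-from-diagram}.

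For the functoriality axioms, I would lean on uniqueness of lifts in $\X$. Because $(fp)(1_{a}) = 1_{fa}$, the morphism $1_{a}$ is itself the unique lift of $1_{fa}$, so $\varphi(a, 1_{fa}) = p(1_{a}) = 1_{a}$, yielding \ref{DL2}. For \ref{DL3}, if $\widetilde{u}$ is the unique lift of $u$ at $a$ and $\widetilde{v}$ the unique lift of $v$ at $a' = \cod(\widetilde{u})$, then $\widetilde{v} \circ \widetilde{u}$ is a lift of $v \circ u$ at $a$ and so must coincide with $\widetilde{v \circ u}$; applying $p$ and using its functoriality then gives $\varphi(a, v \circ u) = \varphi(a', v) \circ \varphi(a, u)$.

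To construct the comparison, I define $j$ to be the identity on objects and $w \mapsto p(w)$ on morphisms. This lands in $\Lambda(f, \varphi)$ because each $p(w)$ is by construction the chosen lift of $(fp)(w)$ starting at $\dom(w)$, and the identity $\overline{\varphi} j = p$ holds tautologically. Faithfulness of $j$ reduces to faithfulness of $p$: if $p(w) = p(w')$ for parallel morphisms $w, w'$ of $\X$, then $(fp)(w) = (fp)(w')$ and the uniqueness clause for the discrete opfibration $fp$ forces $w = w'$. Fullness is automatic, as every morphism of $\Lambda(f, \varphi)$ is a chosen lift and hence of the form $p(\widetilde{u})$. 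Thus $j$ is bijective on objects and morphisms, and so an isomorphism of categories. Finally, any functor $j'$ satisfying $\overline{\varphi} j' = p$ must agree with $p$ on morphisms, and must be identity on objects because both $\overline{\varphi}$ and $p$ are; hence $j' = j$. The only place requiring genuine care is \ref{DL3}, whose verification rests entirely on the uniqueness of lifts in $fp$, so I do not expect a substantive conceptual obstacle.
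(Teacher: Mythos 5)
Your proof is correct and takes essentially the same approach as the paper: you define $\varphi(a,u)$ as the image under $p$ of the unique $fp$-lift in $\X$, and identify $\X$ with $\Lambda(f,\varphi)$ via an identity-on-objects functor whose faithfulness follows from uniqueness of lifts. You merely spell out the verification of \ref{DL2} and \ref{DL3}, which the paper delegates to the canonical delta lens structure $\Upsilon_{fp}$ on the discrete opfibration $fp$ (Example~\ref{example:discrete-opfibration}) together with functoriality of $p$.
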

\begin{proof}
    Since $fp \colon \X \to \B$ is a discrete opfibration, it admits a unique delta lens structure $(fp, \Upsilon_{fp}) \colon \X \to \B$ by Example~\ref{example:discrete-opfibration}.
    Define a delta lens $(f, \varphi) \colon \A \to \B$ whose lifting operation is given by $\varphi(a, u) = p \Upsilon_{fp}(a, u)$; this is well-defined by commutativity of \eqref{equation:delta-lens-from-diagram} and functoriality of $p$. 
    By Lemma~\ref{lemma:Lambda-category}, the category $\Lambda(f, \varphi)$ has the same objects as $\A$, thus the same objects as $\X$. 
    Recalling that every morphism in the domain of the discrete opfibration $fp$ is a chosen lift, there is an identity-on-objects functor $j \colon \X \to \Lambda(f, \varphi)$ which sends $\Upsilon_{fp}(a, u)$ to $\varphi(a, u)$. 
    The functor $j$ is clearly an isomorphism, since $p$ is faithful by commutativity of \eqref{equation:delta-lens-from-diagram}, and the equation $\overline{\varphi}j = p$ is satisfied by construction. 
\end{proof}

Together Lemma~\ref{lemma:Lambda-category} and Proposition~\ref{proposition:delta-lens-from-diagram} tell us that delta lenses are \emph{the same as} certain commutative diagrams of functors. 
This idea forms the cornerstone of many results concerning delta lenses, and is extended to an equivalence of categories in the following subsection. 

\subsection{The category of delta lenses}

Delta lenses may be considered as both objects and as morphisms, which suggests double categories are a suitable framework in which to study them \cite{Clarke2022}. 
In this paper, we focus on the category whose \emph{objects} are delta lenses, which we now recall. 

\begin{definition}
\label{definition:Lens-category}
    Let $\Lens$ be the category whose objects are delta lenses, and whose morphisms $(h, k) \colon (f, \varphi) \to (g, \psi)$, as depicted below, consist of a pair of functors such that $kf = gh$ and $h\varphi(a,u) = \psi(ha, ku)$. 
    \begin{equation}
    \label{equation:Lens-category}
        \begin{tikzcd}
            \A & \C \\
            \B & \D
            \arrow["h", from=1-1, to=1-2]
            \arrow["{(f, \varphi)}"', from=1-1, to=2-1]
            \arrow["{(g, \psi)}", from=1-2, to=2-2]
            \arrow["k"', from=2-1, to=2-2]
        \end{tikzcd}
    \end{equation}
\end{definition}

There is a faithful functor $\Lens \to \Cat^{\two}$ that sends each delta lens to its underlying functor. 
Thus, a morphism of delta lenses may be understood as a commutative square of functors that preserves the chosen lifts. 
Since each delta lens determines a wide subcategory of chosen lifts (Lemma~\ref{lemma:Lambda-category}), we obtain the following result. 

\begin{lemma}
\label{lemma:Lambda-morphism}
    Given a morphism \eqref{equation:Lens-category} of delta lenses, there exists a unique functor $\Lambda(h, k)$ such that the following diagram commutes. 
    \begin{equation}
    \label{equation:Lambda-morphism}
        \begin{tikzcd}
            {\Lambda(f,\varphi)} & {\Lambda(g,\psi)} \\
            \A & \B
            \arrow["{\Lambda(h, k)}", from=1-1, to=1-2]
            \arrow["{\overline{\varphi}}"', from=1-1, to=2-1]
            \arrow["{\overline{\psi}}", from=1-2, to=2-2]
            \arrow["h"', from=2-1, to=2-2]
        \end{tikzcd}
    \end{equation}
\end{lemma}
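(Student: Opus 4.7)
The plan is to construct $\Lambda(h,k)$ by restricting $h$ along the subcategory inclusions, and then rely on faithfulness of $\overline{\psi}$ to make both the construction and the verifications essentially automatic.

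First, I would define $\Lambda(h,k)$ on objects as $h$ acts on them; this is unambiguous since $\overline{\varphi}$ and $\overline{\psi}$ are identity-on-objects by Lemma~\ref{lemma:Lambda-category}, so the objects of the two subcategories coincide with those of $\A$ and $\C$ respectively. For morphisms, each $w$ in $\Lambda(f,\varphi)$ is a chosen lift $\varphi(a,u)\colon a\to a'$ in $\A$, and the morphism axiom of a map of delta lenses yields
\[
    h\overline{\varphi}(w) = h\varphi(a,u) = \psi(ha,ku),
\]
which exhibits $h\overline{\varphi}(w)$ as a chosen lift of $(g,\psi)$, hence as a morphism in the image of $\overline{\psi}$. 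Since $\overline{\psi}$ is a faithful, identity-on-objects subcategory inclusion, there is a unique morphism $\Lambda(h,k)(w)$ of $\Lambda(g,\psi)$ whose image under $\overline{\psi}$ is $h\overline{\varphi}(w)$, and this defines $\Lambda(h,k)$ on morphisms while forcing commutativity of diagram~\eqref{equation:Lambda-morphism} by construction.

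Functoriality then comes essentially for free: to verify $\Lambda(h,k)(1_{a})=1_{ha}$ and preservation of composition, I would apply $\overline{\psi}$ to both sides and use its faithfulness to reduce each equation to the functoriality of $h$ itself. Uniqueness of $\Lambda(h,k)$ follows from the same faithfulness, since any functor making~\eqref{equation:Lambda-morphism} commute must agree with $h$ on objects and must send each morphism $w$ to the unique preimage of $h\overline{\varphi}(w)$ under $\overline{\psi}$.

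I do not expect any serious obstacle; the only subtlety worth flagging is that the morphism axiom $h\varphi(a,u)=\psi(ha,ku)$ describes $h$ only on \emph{chosen lifts} rather than on arbitrary morphisms of $\A$, and this is exactly enough because morphisms of $\Lambda(f,\varphi)$ are precisely the chosen lifts.
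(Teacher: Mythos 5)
Your proposal is correct and takes essentially the same approach as the paper, which simply observes that commutativity of the square forces $\Lambda(h,k)$ to act as $a \mapsto ha$ on objects and $\varphi(a,u) \mapsto \psi(ha,ku)$ on morphisms, with well-definedness supplied by the morphism axiom $h\varphi(a,u)=\psi(ha,ku)$. Your additional remarks on faithfulness of $\overline{\psi}$ and the reduction of functoriality to that of $h$ are accurate elaborations of the same argument.
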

\begin{proof}
    Commutativity required that $\Lambda(h,k)$ has an assignment on objects $a \mapsto ha$ and an assignment on morphisms $\varphi(a, u) \mapsto \psi(ha,ku)$. 
\end{proof}

Let $\DOpf$ be the full subcategory of $\Cat^{\two}$ whose objects are discrete opfibrations. 
We now demonstrate that the relationship between discrete opfibrations and delta lenses extends to an adjunction.

\begin{proposition}
\label{proposition:DOpf-Lens}
    There is a coreflective adjunction
    \begin{equation*}
        \begin{tikzcd}
            \DOpf & \Lens
            \arrow[""{name=0, anchor=center, inner sep=0}, "\Upsilon"', shift right=2, hook, from=1-1, to=1-2]
            \arrow[""{name=1, anchor=center, inner sep=0}, "\Lambda"', shift right=2, from=1-2, to=1-1]
            \arrow["\dashv"{anchor=center, rotate=90}, draw=none, from=0, to=1]
        \end{tikzcd}
    \end{equation*}
    between the category of discrete opfibrations and the category of delta lenses. 
\end{proposition}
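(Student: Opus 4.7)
The plan is to exhibit $\Upsilon$ as a fully faithful left adjoint whose unit is the identity, so that $\Lambda$ is automatically a coreflection. First I would verify functoriality of $\Upsilon$: on objects it is the assignment $f \mapsto (f, \Upsilon_{f})$ of Example~\ref{example:discrete-opfibration}, and on morphisms any commutative square $(h, k) \colon f \to g$ in $\DOpf$ is sent to itself, viewed as a morphism $(f, \Upsilon_{f}) \to (g, \Upsilon_{g})$ in $\Lens$. The lift-preservation condition $h\Upsilon_{f}(a, u) = \Upsilon_{g}(ha, ku)$ is forced by the uniqueness of lifts along the discrete opfibration $g$ together with $kf = gh$. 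The same uniqueness immediately yields that $\Upsilon$ is fully faithful. The functor $\Lambda$ is defined on objects by $(f, \varphi) \mapsto f\overline{\varphi}$ via Lemma~\ref{lemma:Lambda-category}, and on morphisms by $(h, k) \mapsto (\Lambda(h, k), k)$ via Lemma~\ref{lemma:Lambda-morphism}.

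Next I would observe that the composite $\Lambda\Upsilon$ is literally the identity functor on $\DOpf$. Example~\ref{example:discrete-opfibration} gives $\Lambda(f, \Upsilon_{f}) = \A$ and $\overline{\Upsilon}_{f} = 1_{\A}$, so $\Lambda\Upsilon(f) = f\,\overline{\Upsilon}_{f} = f$, and the same holds on morphisms. Consequently the unit of the adjunction will be an identity transformation, which will formally imply coreflectivity once the adjunction is established. For the counit I would take, at each delta lens $(f, \varphi) \colon \A \to \B$, the morphism
\[
    (\overline{\varphi}, 1_{\B}) \colon (f\overline{\varphi},\, \Upsilon_{f\overline{\varphi}}) \longrightarrow (f, \varphi)
\]
in $\Lens$. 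The underlying square commutes by the construction of $\overline{\varphi}$, and the lift-preservation condition reduces to the equation $\overline{\varphi}\,\Upsilon_{f\overline{\varphi}}(a, u) = \varphi(a, u)$, which holds because the unique lift over $u$ in the discrete opfibration $f\overline{\varphi}$ is precisely the chosen lift $\varphi(a, u)$, viewed as a morphism of $\Lambda(f, \varphi)$. Naturality in $(f, \varphi)$ follows directly from the construction of $\Lambda(h, k)$ in Lemma~\ref{lemma:Lambda-morphism}.

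Finally I would check the triangle identities. The one involving $\Upsilon$ requires that the counit at $\Upsilon f$ is the identity, which holds since $\overline{\Upsilon}_{f} = 1_{\A}$; the one involving $\Lambda$ reduces to $\Lambda(\overline{\varphi}, 1_{\B}) = 1$, which holds by inspection. The only mildly delicate point, and hence the main obstacle, is confirming that the proposed counit $(\overline{\varphi}, 1_{\B})$ is a bona fide morphism of delta lenses rather than just a commuting square; but this reduces exactly to the identification of $\Upsilon_{f\overline{\varphi}}(a, u)$ with $\varphi(a, u)$ inside $\Lambda(f, \varphi)$, which is immediate from Lemma~\ref{lemma:Lambda-category}. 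Since the unit is an identity, the resulting adjunction $\Upsilon \dashv \Lambda$ is automatically coreflective with $\Upsilon$ fully faithful.
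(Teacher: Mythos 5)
Your proposal is correct and follows essentially the same route as the paper: identity unit via $\Lambda(f,\Upsilon_f)=\A$, counit $(\overline{\varphi},1_{\B})$ at $(f,\varphi)$, naturality from Lemma~\ref{lemma:Lambda-morphism}, and the triangle identities. The only difference is that you spell out a few points the paper leaves implicit (functoriality of $\Upsilon$ and the uniqueness argument forcing lift-preservation), which is fine.
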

\begin{proof}
    The functor $\Upsilon \colon \DOpf \to \Lens$ sends each discrete opfibration $f$ to its unique delta lens structure $(f, \Upsilon_{f})$ by Example~\ref{example:discrete-opfibration}; it is clearly fully faithful. 
    The functor $\Lambda \colon \Lens \to \DOpf$ sends each delta lens $(f, \varphi)$ to the discrete opfibration $f \, \overline{\varphi}$ as described in Lemma~\ref{lemma:Lambda-category} and Lemma~\ref{lemma:Lambda-morphism}.

    Given a discrete opfibration $f \colon \A \to \B$, the wide subcategory of chosen lifts is $\Lambda(f, \Upsilon_{f}) = \A$, since $\Upsilon_{f}(a, fw) = w$ for each morphism $w \colon a \to a'$ in $\A$. 
    Therefore, the unit of the adjunction is the identity natural transformation. 
    The counit component of the adjunction at a delta lens $(f, \varphi) \colon \A \to \B$ is given by the following morphism of delta lenses. 
    \begin{equation*}
        \begin{tikzcd}
            {\Lambda(f, \varphi)} & \A \\
            \B & \B
            \arrow["{\overline{\varphi}}", from=1-1, to=1-2]
            \arrow["{(f \, \overline{\varphi}, \Upsilon_{f \, \overline{\varphi}})}"', from=1-1, to=2-1]
            \arrow["{(f, \varphi)}", from=1-2, to=2-2]
            \arrow[equals, from=2-1, to=2-2]
        \end{tikzcd}
    \end{equation*}
    Naturality holds by Lemma~\ref{lemma:Lambda-morphism}, and the triangle identities are easily verified. 
\end{proof}

\begin{remark}
\label{remark:DOpf-Lens}
    The functor $\Lambda \colon \Lens \to \DOpf$ admits a fully faithful right adjoint, which is constructed by factorising a discrete opfibration into an identity-on-objects functor followed by the fully faithful functor, the latter of which admits a canonical delta lens structure. 
    The functor $\Upsilon \colon \DOpf \to \Lens$ admits a left adjoint which is constructed by factorising the underlying functor of a delta lens into an initial functor followed by a discrete opfibration. 
\end{remark}

We want to construct an equivalence of categories between $\Lens$ and a category, whose objects are certain commutative diagrams of functors as in Proposition~\ref{proposition:delta-lens-from-diagram}, which we now introduce. 
The corresponding double category was considered in previous work \cite[Section~2]{Clarke2024}.

\begin{definition}
\label{definition:DiaLens}
    A \emph{diagrammatic delta lens} $(f, p)$ is a composable pair of functors $f \colon \A \to \B$ and $p \colon \X \to \A$ such that $p$ is identity-on-objects and $f \circ p$ is a discrete opfibration. 
    Let $\DiaLens$ be the category whose objects are diagrammatic delta lenses, and whose morphisms $(\ell, h, k) \colon (f, p) \to (g, q)$ are triples of functors such that the following diagram commutes. 
    \begin{equation}
    \label{equation:DiaLens}
        \begin{tikzcd}
            \X & \Y \\
            \A & \C \\
            \B & \D
            \arrow["\ell", from=1-1, to=1-2]
            \arrow["p"', from=1-1, to=2-1]
            \arrow["fp"', curve={height=24pt}, from=1-1, to=3-1]
            \arrow["q", from=1-2, to=2-2]
            \arrow["gq", curve={height=-24pt}, from=1-2, to=3-2]
            \arrow["h", from=2-1, to=2-2]
            \arrow["f"', from=2-1, to=3-1]
            \arrow["g", from=2-2, to=3-2]
            \arrow["k"', from=3-1, to=3-2]
        \end{tikzcd}
    \end{equation}
\end{definition}

Note that the functor $\ell \colon \X \to \Y$ in a morphism \eqref{equation:DiaLens} of diagrammatic delta lenses is unique by the properties of identity-on-objects functors and discrete opfibrations. 
Therefore, there is a faithful functor $\DiaLens \to \Cat^{\two}$ that sends each diagrammatic delta lens $(f, p)$ to $f$. 

\begin{lemma}
\label{lemma:Lens-to-DiaLens}
    There is a functor $\hat{\Lambda} \colon \Lens \to \DiaLens$ that assigns a delta lens $(f, \varphi)$ to the diagrammatic delta lens $(f, \overline{\varphi})$. 
\end{lemma}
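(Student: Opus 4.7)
The plan is to build $\hat{\Lambda}$ directly from the ingredients already assembled in Lemma~\ref{lemma:Lambda-category} and Lemma~\ref{lemma:Lambda-morphism}, since these essentially package everything required on objects and morphisms respectively. The main content is a verification, not a construction, so the proof is largely a bookkeeping exercise; the only potential pitfall is making sure the diagram \eqref{equation:DiaLens} commutes on all faces and that functoriality does not require any new argument beyond uniqueness.

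On objects, given a delta lens $(f, \varphi) \colon \A \to \B$, I would set $\hat{\Lambda}(f, \varphi) = (f, \overline{\varphi})$. By Lemma~\ref{lemma:Lambda-category}, the inclusion $\overline{\varphi} \colon \Lambda(f, \varphi) \to \A$ is identity-on-objects and the composite $f \, \overline{\varphi} \colon \Lambda(f, \varphi) \to \B$ is a discrete opfibration, so the pair $(f, \overline{\varphi})$ satisfies exactly the conditions of Definition~\ref{definition:DiaLens}.

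On morphisms, given $(h, k) \colon (f, \varphi) \to (g, \psi)$ in $\Lens$, I would define $\hat{\Lambda}(h, k) = (\Lambda(h, k), h, k)$, where $\Lambda(h, k)$ is the unique functor supplied by Lemma~\ref{lemma:Lambda-morphism} making the square \eqref{equation:Lambda-morphism} commute. To verify that this is a morphism of diagrammatic delta lenses, I would read off the two squares of \eqref{equation:DiaLens}: the upper square $h \, \overline{\varphi} = \overline{\psi} \, \Lambda(h, k)$ is precisely \eqref{equation:Lambda-morphism}, while the lower square $k f = g h$ is the defining condition of a morphism in $\Lens$. The two triangles in \eqref{equation:DiaLens} commute by construction.

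Finally, functoriality is immediate from the uniqueness clause of Lemma~\ref{lemma:Lambda-morphism}. The identity $(1_{\A}, 1_{\B}) \colon (f, \varphi) \to (f, \varphi)$ forces $\Lambda(1_{\A}, 1_{\B}) = 1_{\Lambda(f, \varphi)}$ by uniqueness, and for a composable pair in $\Lens$, both $\Lambda(h', k') \circ \Lambda(h, k)$ and $\Lambda(h' h, k' k)$ satisfy the defining equation of the latter, hence coincide. The hardest step, if any, is just being careful about which square is being referenced; no nontrivial categorical input is needed beyond the earlier lemmas.
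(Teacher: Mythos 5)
Your proposal is correct and follows the same route as the paper, which likewise establishes well-definedness on objects via Lemma~\ref{lemma:Lambda-category} and on morphisms via Lemma~\ref{lemma:Lambda-morphism} (the paper adds only the alternative remark that one could instead invoke naturality of the counit of $\Upsilon \dashv \Lambda$ from Proposition~\ref{proposition:DOpf-Lens}). Your extra care with the faces of \eqref{equation:DiaLens} and the uniqueness argument for functoriality is exactly the bookkeeping the paper leaves implicit.
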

\begin{proof}
    The functor $\hat{\Lambda}$ is well-defined on objects by Lemma~\ref{lemma:Lambda-category} and on morphisms by Lemma~\ref{lemma:Lambda-morphism}.
    Equivalently, we may invoke naturality of the counit of the adjunction $\Upsilon \dashv \Lambda$ in Proposition~\ref{proposition:DOpf-Lens}.
\end{proof}

\begin{lemma}
\label{lemma:DiaLens-to-Lens}
    There is a functor $\hat{\Upsilon} \colon \DiaLens \to \Lens$ that assigns a diagrammatic delta lens $(f, p)$ to the delta lens $(f, p\Upsilon_{fp})$.  
\end{lemma}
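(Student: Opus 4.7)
The plan is straightforward: on objects, apply Proposition~\ref{proposition:delta-lens-from-diagram} (which already produces a delta lens from exactly such a diagram), and on morphisms, forget the top component $\ell$ and show that the middle and bottom pair $(h,k)$ forms a morphism of the resulting delta lenses.

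For the object assignment, given a diagrammatic delta lens $(f, p)$ with $f \colon \A \to \B$ and $p \colon \X \to \A$, Proposition~\ref{proposition:delta-lens-from-diagram} yields a delta lens $(f, \varphi) \colon \A \to \B$ whose lifting operation is $\varphi(a, u) = p \Upsilon_{fp}(a, u)$; this is well-defined because $p$ is identity-on-objects, so $fpa = fa$, and hence $\Upsilon_{fp}(a, u)$ makes sense in $\X$. Set $\hat{\Upsilon}(f, p) = (f, p \Upsilon_{fp})$.

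For the morphism assignment, given $(\ell, h, k) \colon (f, p) \to (g, q)$, I would define $\hat{\Upsilon}(\ell, h, k) = (h, k)$. The equation $kf = gh$ already holds, so the only thing to verify is the lifting condition
\[
    h \bigl( p\,\Upsilon_{fp}(a, u) \bigr) = q\,\Upsilon_{gq}(ha, ku).
\]
Using the commuting relation $hp = q\ell$ from \eqref{equation:DiaLens}, the left-hand side equals $q\bigl(\ell\,\Upsilon_{fp}(a, u)\bigr)$, so it suffices to show $\ell\,\Upsilon_{fp}(a, u) = \Upsilon_{gq}(ha, ku)$ in $\Y$. Both morphisms lift $ku$ along the discrete opfibration $gq \colon \Y \to \D$ starting at the object $\ell a = ha$ (noting that $\ell$ is identity-on-objects by the uniqueness remark following Definition~\ref{definition:DiaLens}), so they agree by the uniqueness of discrete-opfibration lifts. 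Functoriality is then immediate, since identities and composition in $\DiaLens$ act as identities and composition on the underlying pair $(h, k)$.

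The only subtle point — more a careful bookkeeping step than a genuine obstacle — is the equality $\ell\,\Upsilon_{fp}(a, u) = \Upsilon_{gq}(ha, ku)$, which crucially depends on the uniqueness of lifts along $gq$ rather than on any structure transported by $\ell$ itself. This same argument in fact expresses that the top square of \eqref{equation:DiaLens} is precisely the result of applying $\Lambda$ (from Lemma~\ref{lemma:Lambda-morphism}) to the image morphism under $\hat{\Upsilon}$, which is the starting observation for the later equivalence $\Lens \simeq \DiaLens$ in Theorem~\ref{theorem:Lens-DiaLens}.
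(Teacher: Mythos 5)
Your proposal is correct and follows essentially the same route as the paper: objects are handled by Proposition~\ref{proposition:delta-lens-from-diagram}, and the key identity $\ell\,\Upsilon_{fp}(a,u) = \Upsilon_{gq}(ha,ku)$, which the paper dispatches as ``functoriality of $\Upsilon$'', is exactly what you prove inline via uniqueness of lifts along the discrete opfibration $gq$. One small wording slip: $\ell$ is not identity-on-objects (it is a functor $\X \to \Y$ between different categories); the fact you actually need, $\ell a = ha$, follows instead from $q\ell = hp$ together with $p$ and $q$ being identity-on-objects.
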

\begin{proof}
    The functor $\hat{\Upsilon}$ is well-defined on objects by Proposition~\ref{proposition:delta-lens-from-diagram}. 
    Given a morphism \eqref{equation:DiaLens} of diagrammatic delta lenses, there is a morphism $(h, k) \colon (f, p \Upsilon_{fp}) \to (g, q \Upsilon_{gq})$ of delta lenses, since $kf = gh$ and, for all $a \in \A$ and $u \colon fa \to b$ in $\B$, the equation $hp \Upsilon_{fp}(a, u) = q \ell \Upsilon_{fp}(a, u) = q \Upsilon_{gq}(ha, ku)$ holds by functoriality of $\Upsilon$. 
\end{proof}

We are now ready to demonstrate that the category of delta lenses is equivalent to the category of diagrammatic delta lenses; one might call this a ``representation theorem'' for delta lenses. 
This equivalence has been used implicitly for years \cite{Clarke2020} however a detailed proof in the literature remained absent; we now fill this gap. 

\begin{theorem}
\label{theorem:Lens-DiaLens}
    There is an equivalence of categories $\Lens \simeq \DiaLens$. 
\end{theorem}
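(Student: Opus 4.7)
The plan is to demonstrate that the functors $\hat{\Lambda} \colon \Lens \to \DiaLens$ of Lemma~\ref{lemma:Lens-to-DiaLens} and $\hat{\Upsilon} \colon \DiaLens \to \Lens$ of Lemma~\ref{lemma:DiaLens-to-Lens} are quasi-inverse. I will argue that $\hat{\Upsilon} \circ \hat{\Lambda}$ is actually the identity on $\Lens$, while $\hat{\Lambda} \circ \hat{\Upsilon}$ is naturally isomorphic to the identity via the canonical comparisons supplied by Proposition~\ref{proposition:delta-lens-from-diagram}.

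For the first composite, fix a delta lens $(f, \varphi)$; then $\hat{\Upsilon}\hat{\Lambda}(f, \varphi) = (f, \overline{\varphi}\,\Upsilon_{f\overline{\varphi}})$. Each chosen lift $\varphi(a, u)$ is by definition a morphism of the wide subcategory $\Lambda(f, \varphi)$, and axiom~\ref{DL1} implies it lifts $u$ along the discrete opfibration $f\overline{\varphi}$. Uniqueness of lifts for $f\overline{\varphi}$ forces $\Upsilon_{f\overline{\varphi}}(a, u) = \varphi(a, u)$ in $\Lambda(f, \varphi)$, and applying $\overline{\varphi}$ recovers $\varphi(a, u)$ in $\A$. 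On morphisms, $\hat{\Upsilon}$ strips away the functor $\Lambda(h, k)$ produced by $\hat{\Lambda}$, so $\hat{\Upsilon}\hat{\Lambda}(h, k) = (h, k)$, giving $\hat{\Upsilon}\hat{\Lambda} = 1_{\Lens}$ strictly.

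For the second composite, let $(f, p)$ be a diagrammatic delta lens. Proposition~\ref{proposition:delta-lens-from-diagram}, applied to the triangle $(f, p)$, produces a delta lens with lifting operation $p\Upsilon_{fp}$---precisely $\hat{\Upsilon}(f, p)$---together with a unique identity-on-objects isomorphism $j_{(f, p)} \colon \X \to \Lambda(f, p\Upsilon_{fp})$ satisfying $\overline{p\Upsilon_{fp}} \circ j_{(f, p)} = p$. The triple $(j_{(f, p)}, 1_{\A}, 1_{\B})$ is therefore an isomorphism in $\DiaLens$ from $(f, p)$ to $\hat{\Lambda}\hat{\Upsilon}(f, p) = (f, \overline{p\Upsilon_{fp}})$. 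For naturality with respect to a morphism $(\ell, h, k) \colon (f, p) \to (g, q)$, both $\Lambda(h, k) \circ j_{(f, p)}$ and $j_{(g, q)} \circ \ell$ are identity-on-objects functors $\X \to \Lambda(g, q\Upsilon_{gq})$; upon post-composition with the faithful inclusion $\overline{q\Upsilon_{gq}}$, the former equals $h \circ \overline{p\Upsilon_{fp}} \circ j_{(f,p)} = hp$ by Lemma~\ref{lemma:Lambda-morphism}, and the latter equals $q \circ \ell = hp$ by commutativity of~\eqref{equation:DiaLens}. Faithfulness of $\overline{q\Upsilon_{gq}}$ then forces the two composites to coincide.

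The main obstacle is primarily bookkeeping rather than ingenuity: one must carefully distinguish a chosen lift as a morphism of $\A$ from its avatar in $\Lambda(f, \varphi)$, and recognise that the mismatch between the potentially distinct categories $\X$ and $\Lambda(f, p\Upsilon_{fp})$ is exactly what prevents $\hat{\Lambda}\hat{\Upsilon}$ from being the identity on the nose. Once this distinction is kept explicit, the uniqueness clause of Proposition~\ref{proposition:delta-lens-from-diagram} and the faithfulness of wide-subcategory inclusions do all the heavy lifting for the coherence and naturality checks.
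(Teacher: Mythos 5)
Your proposal is correct and follows essentially the same route as the paper: it uses the same functors $\hat{\Lambda}$ and $\hat{\Upsilon}$, establishes $\hat{\Upsilon}\hat{\Lambda} = 1$ on the nose, and obtains $\hat{\Lambda}\hat{\Upsilon} \cong 1$ from the isomorphism $j$ of Proposition~\ref{proposition:delta-lens-from-diagram}. You additionally spell out the naturality check that the paper omits (note only that the composites $\Lambda(h,k)\circ j_{(f,p)}$ and $j_{(g,q)}\circ\ell$ act as $h$ on objects rather than being identity-on-objects, but your faithfulness argument is unaffected).
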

\begin{proof}
    Given functors $\hat{\Lambda} \colon \Lens \to \DiaLens$ and $\hat{\Upsilon} \colon \DiaLens \to \Lens$ defined in Lemma~\ref{lemma:Lens-to-DiaLens} and Lemma~\ref{lemma:DiaLens-to-Lens}, respectively, we show that there are natural isomorphisms $\hat{\Upsilon} \circ \hat{\Lambda} \cong 1$ and $\hat{\Lambda} \circ \hat{\Upsilon} \cong 1$. 
    We construct the components of these isomorphisms, omitting the straightforward proof of naturality. 

    Applying the functor $\hat{\Upsilon} \circ \hat{\Lambda}$ to a delta lens $(f, \varphi) \colon \A \to \B$ we obtain a delta lens $(f, \overline{\varphi} \, \Upsilon_{f \, \overline{\varphi}}) \colon \A \to \B$. 
    Since $\Upsilon_{f \, \overline{\varphi}}(a, u) = \varphi(a, u)$ by construction, and $\overline{\varphi}$ is a wide subcategory inclusion, it follows that $\overline{\varphi} \, \Upsilon_{f \, \overline{\varphi}}(a, u) = \varphi(a, u)$ for all $a \in \A$ and $u \colon fa \to b$ in $\B$. 
    Therefore, we have an equality of functors $\hat{\Upsilon} \circ \hat{\Lambda} = 1$. 

    Applying the functor $\hat{\Lambda} \circ \hat{\Upsilon}$ to a diagrammatic delta lens $(f, p)$ we obtain a diagrammatic delta lens $(f, \overline{\varphi})$ where $\varphi = \overline{p\Upsilon}_{fp}$. 
    To show that $1 \cong \hat{\Lambda} \circ \hat{\Upsilon}$, we need an isomorphism $j \colon \X \cong \Lambda(f, \varphi)$ such that $\overline{\varphi}j = p$. 
    However, this follows immediately from Proposition~\ref{proposition:delta-lens-from-diagram}, completing the proof. 
\end{proof}

\subsection{Split opfibrations as delta lenses}
\label{sec:split-opfibration}

Discrete opfibrations are a special case of a more general notion called a split opfibration. 
Although split opfibrations are typically defined as opfibrations equipped with certain \emph{additional structure}, we instead define them as delta lenses satisfying a certain \emph{property}.

\begin{example}
\label{example:split-opfibration}
    A \emph{split opfibration} is a delta lens $(f, \varphi) \colon \A \rightarrow \B$ such that each chosen lift $\varphi(a, u)$ is \emph{opcartesian}. 
    This means that for each morphism $w \colon a \rightarrow a''$ in $\A$ and composable pair of morphisms in $\B$ such that $fw = v \circ u$, there exists a unique morphism $w' \colon a' \rightarrow a''$ in $\A$ such that $w' \circ \varphi(a, u) = w$ and $fw' = v$. 
    \begin{equation*}
        \begin{tikzcd}
            \A 
            \arrow[d, "{(f, \, \varphi)}"']
            & 
            a 
            \arrow[r, "{\varphi(a, \, u)}"']
            \arrow[d, phantom, "\vdots"]
            \arrow[rounded corners, to path={-- ([yshift=2.2ex]\tikztostart.north) -- node[above]{\scriptsize$w$}  ([yshift=1.5ex]\tikztotarget.north) -- (\tikztotarget)}]{rr}
            &
            a'
            \arrow[r, dashed, "\exists! \, w'"']
            \arrow[d, phantom, "\vdots"]
            &
            a''
            \arrow[d, phantom, "\vdots"]
            \\
            \B 
            &
            fa
            \arrow[r, "u"]
            \arrow[rounded corners, to path={-- ([yshift=-1.5ex]\tikztostart.south) -- node[below]{\scriptsize$fw$}  ([yshift=-1.5ex]\tikztotarget.south) -- (\tikztotarget)}]{rr}
            &
            b 
            \arrow[r, "v"]
            & 
            fa''
        \end{tikzcd}
    \end{equation*}

    Equivalently, a split opfibration is a delta lens such that each chosen lift is \emph{weakly opcartesian}. 
    This means that for each morphism $w \colon a \rightarrow a''$ in $\A$, there exists a unique morphism $\tilde{w} \colon a' \rightarrow a''$ in $\A$ such that $\tilde{w} \circ \varphi(a, u) = w$ and $f\hat{w} = 1$. 
    \begin{equation*}
        \begin{tikzcd}
            &
            & 
            a'
            \arrow[d, dashed, "\exists! \, \tilde{w}"]
            \\[-1ex]
            \A 
            \arrow[d, "{(f, \, \varphi)}"']
            & 
            a 
            \arrow[r, "w"']
            \arrow[d, phantom, "\vdots"]
            \arrow[ru, "{\varphi(a, \, u)}"]
            &
            a''
            \arrow[d, phantom, "\vdots"]
            \\
            \B 
            &
            fa
            \arrow[r, "fw"]
            &
            fa''
        \end{tikzcd}
    \end{equation*}
    Since chosen lifts are closed under composition by axiom \ref{DL3}, it is straightforward to show that these characterisations of a split opfibration in terms of opcartesian and weakly opcartesian lifts are equivalent. 
\end{example}

Let $\SOpf$ be the full subcategory of $\Lens$ whose objects are split opfibrations. 
It is natural to wonder if we can characterise the essential image of the subcategory inclusion $\SOpf \to \Lens$ followed by the equivalence $\Lens \simeq \DiaLens$. 

Recall that there is an endofunctor $\Dec \colon \Cat \to \Cat$, which sends each category $\A$ to the coproduct $\sum_{a \in \A} \A / a$ of its slice categories, called the \emph{décalage} of $\A$. 
This endofunctor is copointed, and the component of $\varepsilon \colon \Dec \Rightarrow 1$ at a category $\A$ is the projection $\dom \colon \sum_{a \in \A} \A / a \to \A$ which sends each morphism to its domain. 

\begin{proposition}[{\cite[Theorem~4.7]{Clarke2020b}}]
\label{proposition:diagrammatic-SOpf}
    A diagrammatic delta lens $(f, p)$ corresponds to a split opfibration under the equivalence $\DiaLens \simeq \Lens$ if and only if the functor $\Dec(f) \circ \pi_{2}$ is a discrete opfibration. 
    \begin{equation*}
        \begin{tikzcd}
            {\X \times_{\A}\Dec(\A)} & {\Dec(\A)} & {\Dec(\B)} \\
            \X & \A & \B
            \arrow["{\pi_{2}}", from=1-1, to=1-2]
            \arrow["{\pi_{1}}"', from=1-1, to=2-1]
            \arrow["\big\lrcorner"{anchor=center, pos=0.125}, draw=none, from=1-1, to=2-2]
            \arrow["{\Dec(f)}", from=1-2, to=1-3]
            \arrow["{\varepsilon_{\A}}", from=1-2, to=2-2]
            \arrow["{\varepsilon_{\B}}", from=1-3, to=2-3]
            \arrow["p"', from=2-1, to=2-2]
            \arrow["f"', from=2-2, to=2-3]
        \end{tikzcd}
    \end{equation*}
\end{proposition}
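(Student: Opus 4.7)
The plan is to unpack both sides of the asserted equivalence into elementary data and then match them directly. Since $p$ is identity-on-objects and $\varepsilon_{\A}$ is the domain projection, an object of the pullback $\X \times_{\A} \Dec(\A)$ is equivalently just a morphism $w \colon x \to a$ of $\A$, and a morphism from $(x, w \colon x \to a)$ to $(x', w' \colon x' \to a)$ (necessarily with the same codomain in $\A$) is a morphism $u \colon x \to x'$ in $\X$ satisfying $w' \circ p(u) = w$. Under this identification, $\Dec(f) \circ \pi_{2}$ sends the object $(x, w)$ to $fw \colon fx \to fa$ and a morphism $u$ to $fp(u) \colon fx \to fx'$, viewed as a morphism from $fw$ to $fw'$ in $\Dec(\B)$.

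With this in hand, I would translate the condition that $\Dec(f) \circ \pi_{2}$ be a discrete opfibration into the following: for every object $(x, w \colon x \to a)$ and every factorisation $fw = w'' \circ h$ with $h \colon fx \to c$ and $w'' \colon c \to fa$ in $\B$, there exists a unique triple $(x', u, w')$ consisting of an object $x' \in \X$ with $fx' = c$, a morphism $u \colon x \to x'$ in $\X$ with $fp(u) = h$, and a morphism $w' \colon x' \to a$ in $\A$ satisfying $fw' = w''$ and $w' \circ p(u) = w$.

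Now the hypothesis that $fp$ is a discrete opfibration supplies the data $(x', u)$ uniquely, and by the formula $\varphi(a, u) = p\Upsilon_{fp}(a, u)$ defining the delta lens corresponding to $(f, p)$ under Theorem~\ref{theorem:Lens-DiaLens}, one has $p(u) = \varphi(x, h)$. The residual requirement — that for every $w \colon x \to a$ in $\A$ and every $w'' \colon fx' \to fa$ in $\B$ with $w'' \circ h = fw$ there exist a unique $w' \colon x' \to a$ with $w' \circ \varphi(x, h) = w$ and $fw' = w''$ — is verbatim the opcartesian condition on the chosen lift $\varphi(x, h)$. Since every chosen lift of $(f, \varphi)$ arises as $\varphi(x, h)$ for suitable $x$ and $h \colon fx \to c$, the discrete opfibration condition on $\Dec(f) \circ \pi_{2}$ is equivalent to opcartesianness of every chosen lift, i.e.\ to $(f, \varphi)$ being a split opfibration.

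The main obstacle is not conceptual but bookkeeping: one must be careful with the conventions for $\Dec$ — in particular that morphisms in the slice $\A/a$ lie over a fixed codomain $a$, so that morphisms in $\Dec(\A)$ connect objects with common codomain — and one must use the identity-on-objects property of $p$ to simplify the pullback to plain morphisms of $\A$. Once this unpacking is done, the equivalence of the two conditions is immediate from the definition of the corresponding delta lens and the universal property of the discrete opfibration $fp$.
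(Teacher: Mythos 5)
Your proposal is correct, and it is worth noting that the paper itself gives no proof of this proposition at all: it is imported by citation from \cite[Theorem~4.7]{Clarke2020b}, where the result is established in the more general setting of internal categories. Your argument is a direct, set-level verification of exactly the right shape. The unpacking of the pullback is accurate: since $\Dec(\A)=\sum_{a}\A/a$ and $\varepsilon_{\A}=\dom$, objects of $\X\times_{\A}\Dec(\A)$ are morphisms $w\colon x\to a$ of $\A$, morphisms are $u$ in $\X$ with $w'\circ p(u)=w$ (codomains agreeing because $\Dec$ is a coproduct of slices), and $\Dec(f)\circ\pi_{2}$ acts by $w\mapsto fw$, $u\mapsto fp(u)$. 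A morphism of $\Dec(\B)$ out of $fw$ is precisely a factorisation $fw=w''\circ h$, so the discrete opfibration condition is the unique-lifting statement you wrote down. The one step you pass over quickly, and which deserves a sentence, is why unique existence of the \emph{pair} $(u,w')$ decomposes as you claim: any $u$ occurring in such a pair has domain $x$ and satisfies $fp(u)=h$, hence is forced to be the unique $fp$-lift $\Upsilon_{fp}(x,h)$, so that $p(u)=\varphi(x,h)$; only then does the residual condition on $w'$ become verbatim the opcartesianness of $\varphi(x,h)$ as in Example~\ref{example:split-opfibration}. With that observation made explicit, the two conditions match under the equivalence of Theorem~\ref{theorem:Lens-DiaLens}, and your proof is complete.
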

\section{The double category of split multivalued functions}
\label{sec:SMULT}

In this section, we introduce the double category $\SMULT$ of sets, functions, and split multivalued functions. 
We also introduce the notion of \emph{globular transformation} between double functors, akin to an icon of $2$-functors. 
Our main results are that the $\SMULT$ admits a $1$-dimensional (Theorem~\ref{theorem:1d-universal-property-SMULT}) and $2$-dimensional (Theorem~\ref{theorem:2d-universal-property-SMULT}) universal property as the ``terminal'' globular cone over $K_{\ast} \colon \SQ(\Set) \to \SPAN$. 
We obtain an isomorphism $\Idx(\Cat, \SMULT) \cong \GlobCone(\Cat, K_{\ast})$ of categories whose objects are lax double functors $\LO(\B) \to \SMULT$ and globular cones over $K_{\ast}$, respectively (Corollary~\ref{corollary:Idx-GlobCone}).  

We assume familiarity with basic double category theory \cite{Grandis2019}. 
To establish our conventions, a double category consists of four kinds of things: \emph{objects}, \emph{tight morphisms} $A \to B$ (drawn vertically), \emph{loose morphisms} $A \lto B$ (drawn horizontally), and \emph{cells}. 
We have adopted the terminology of \emph{tight} and \emph{loose} from $\mathcal{F}$-categories~\cite{LackShulman2012}.
Composition in the tight direction is strict, while composition in the loose direction is associative and unital up to specified isomorphism. 
A double category is called \emph{strict} if composition in both the tight and loose directions is strict.

\subsection{Split multivalued functions}

\begin{definition}
\label{definition:split-multivalued-function}
    A \emph{multivalued function} $A \lto B$ is a span of functions
    \begin{equation}
    \label{equation:multivalued-function}
        \begin{tikzcd}
            A
            & 
            X 
            \arrow[l, two heads, "s"']
            \arrow[r, "t"]
            & 
            B
        \end{tikzcd}
    \end{equation}
    such that $s$ is an epimorphism.
    A \emph{split multivalued function} $A \lto B$ is a diagram of functions 
    \begin{equation}
    \label{equation:split-mutlivalued-function}
        \begin{tikzcd}
            A
            & 
            X 
            \arrow[l, two heads, "s", shift left = 2]
            \arrow[from=l, tail, "\sigma", shift left = 2]
            \arrow[r, "t"]
            & 
            B
        \end{tikzcd}
    \end{equation}
    such that $s \circ \sigma = 1_{A}$. 
\end{definition}

Given a multivalued function \eqref{equation:multivalued-function}, the fibre $s^{-1}\{a\}$ of the function $s \colon X \to A$ at each element $a \in A$ is a non-empty subset of $X$, and the restriction of the function $t \colon X \to B$ to this subset determines the ``multiple values'' of the element $a \in A$. 
For a split multivalued function \eqref{equation:split-mutlivalued-function}, each fibre of $s \colon X \to A$ is equipped with a chosen element $\sigma(a) \in s^{-1}\{a\}$, which is furthermore sent to an element $t(\sigma(a)) \in B$. 

There are various restrictions we could place of the definition of (split) multivalued function, such as working with isomorphism classes of spans, asking that the epimorphism leg have finite fibres, or requiring underlying span to be jointly monic. 
Our particular choice of definition provides the required level of generality to prove our desired results about delta lenses.

\begin{definition}
\label{definition:double-category-SMult}
    The \emph{double category of split multivalued functions}, denoted $\SMULT$, consists of the following data. 
    \begin{enumerate}[label=(\alph*)]
        \item Objects are sets. 
        \item Tight morphisms $f \colon A \to B$ are functions. 
        \item Loose morphisms $(s, X, t, \sigma) \colon A \lto B$ are split multivalued functions \eqref{equation:split-mutlivalued-function}.
        \item Cells correspond to diagrams of functions, as depicted below, such that the equations $q_{2} \circ \alpha = g \circ p_{2}$, $q_{1} \circ \alpha = f \circ p_{1}$, and $\alpha \circ \varphi = \psi \circ f$ hold. 
        \begin{equation}
        \label{equation:SMULT-cell}
            \begin{tikzcd}[column sep = large]
                A & B \\
                C & D
                \arrow["{(p_{1},\, X,\, p_{2},\, \varphi)}"{inner sep = 1ex}, "\shortmid"{marking}, from=1-1, to=1-2]
                \arrow[""{name=0, anchor=center, inner sep=0}, "f"', from=1-1, to=2-1]
                \arrow[""{name=1, anchor=center, inner sep=0}, "g", from=1-2, to=2-2]
                \arrow["{(q_{1},\, Y,\, q_{2},\, \psi)}"'{inner sep = 1ex}, "\shortmid"{marking}, from=2-1, to=2-2]
                \arrow["\alpha"{description}, draw=none, from=0, to=1]
            \end{tikzcd}
            \quad \coloneqq \quad 
            \begin{tikzcd}
                A & X & B \\
                C & Y & D
                \arrow["\varphi", shift left=2, tail, from=1-1, to=1-2]
                \arrow["f"', from=1-1, to=2-1]
                \arrow["{p_{1}}", shift left=2, two heads, from=1-2, to=1-1]
                \arrow["{p_{2}}", from=1-2, to=1-3]
                \arrow["\alpha", from=1-2, to=2-2]
                \arrow["g", from=1-3, to=2-3]
                \arrow["\psi", shift left=2, tail, from=2-1, to=2-2]
                \arrow["{q_{1}}", shift left=2, two heads, from=2-2, to=2-1]
                \arrow["{q_{2}}"', from=2-2, to=2-3]
            \end{tikzcd}
        \end{equation}
        \item Composition of tight morphisms is composition of functions, and identity tight morphisms are identity functions. 
        \item Given loose morphisms $(p_{1}, X, p_{2}, \varphi) \colon A \lto B$ and $(q_{1}, Y, q_{2}, \psi) \colon B \lto C$, their composite is given by the following diagram of functions, where $Z$ is the pullback of $p_{2}$ and $q_{1}$ with corresponding projections $\pi_{X}$ and $\pi_{Y}$, and $\langle 1_{X}, \psi \circ p_{2} \rangle \colon X \to Z$ is the uniquely induced morphism into the pullback. 
        \begin{equation*}
            \begin{tikzcd}
                && Z \\
                & X && Y \\
                A && B && C
                \arrow["{\pi_{X}}", shift left=2, two heads, from=1-3, to=2-2]
                \arrow["{\pi_{Y}}", from=1-3, to=2-4]
                \arrow["\lrcorner"{anchor=center, pos=0.125, rotate=-45}, draw=none, from=1-3, to=3-3]
                \arrow["{\langle 1_{X},\, \psi \circ p_{2} \rangle}", shift left=2, tail, from=2-2, to=1-3]
                \arrow["{p_{1}}", shift left=2, two heads, from=2-2, to=3-1]
                \arrow["{p_{2}}", from=2-2, to=3-3]
                \arrow["{q_{1}}", shift left=2, two heads, from=2-4, to=3-3]
                \arrow["{q_{2}}", from=2-4, to=3-5]
                \arrow["\varphi", shift left=2, tail, from=3-1, to=2-2]
                \arrow["\psi", shift left=2, tail, from=3-3, to=2-4]
            \end{tikzcd}
        \end{equation*}
        The identity loose morphism on a set $A$ is given by $(1_{A}, A, 1_{A}, 1_{A}) \colon A \lto A$. 
        \item The identity cells on a tight morphism $f \colon A \to B$ and a loose morphism $(s, X, t, \sigma) \colon A \lto B$ are given by the following diagrams, respectively. 
        \begin{equation*}
            \begin{tikzcd}
                A & A & A \\
                B & B & B
                \arrow["{1_{A}}", shift left=2, tail, from=1-1, to=1-2]
                \arrow["f"', from=1-1, to=2-1]
                \arrow["{1_{A}}", shift left=2, two heads, from=1-2, to=1-1]
                \arrow["{1_{A}}", from=1-2, to=1-3]
                \arrow["f", from=1-2, to=2-2]
                \arrow["f", from=1-3, to=2-3]
                \arrow["{1_{B}}", shift left=2, tail, from=2-1, to=2-2]
                \arrow["{1_{B}}", shift left=2, two heads, from=2-2, to=2-1]
                \arrow["{1_{B}}"', from=2-2, to=2-3]
            \end{tikzcd}
            \qquad \qquad 
            \begin{tikzcd}
                A & X & B \\
                A & X & B
                \arrow["\sigma", shift left=2, tail, from=1-1, to=1-2]
                \arrow["{1_{A}}"', from=1-1, to=2-1]
                \arrow["s", shift left=2, two heads, from=1-2, to=1-1]
                \arrow["t", from=1-2, to=1-3]
                \arrow["{1_{X}}", from=1-2, to=2-2]
                \arrow["{1_{B}}", from=1-3, to=2-3]
                \arrow["\sigma", shift left=2, tail, from=2-1, to=2-2]
                \arrow["s", shift left=2, two heads, from=2-2, to=2-1]
                \arrow["t"', from=2-2, to=2-3]
            \end{tikzcd}
        \end{equation*}
        \item The composite of cells in the tight direction is given by the following. 
        \begin{equation*}
            \begin{tikzcd}
                A & X & B \\
                {A'} & {X'} & {B'} \\
                {A''} & {X''} & {B''}
                \arrow["\sigma", shift left=2, tail, from=1-1, to=1-2]
                \arrow["f"', from=1-1, to=2-1]
                \arrow["s", shift left=2, two heads, from=1-2, to=1-1]
                \arrow["t", from=1-2, to=1-3]
                \arrow["\alpha", from=1-2, to=2-2]
                \arrow["g", from=1-3, to=2-3]
                \arrow["{\sigma'}", shift left=2, tail, from=2-1, to=2-2]
                \arrow["h"', from=2-1, to=3-1]
                \arrow["{s'}", shift left=2, two heads, from=2-2, to=2-1]
                \arrow["{t'}"', from=2-2, to=2-3]
                \arrow["\beta", from=2-2, to=3-2]
                \arrow["k", from=2-3, to=3-3]
                \arrow["{\sigma''}", shift left=2, tail, from=3-1, to=3-2]
                \arrow["{s''}", shift left=2, two heads, from=3-2, to=3-1]
                \arrow["{t''}"', from=3-2, to=3-3]
            \end{tikzcd}
            \quad = \quad 
            \begin{tikzcd}
                A & X & B \\
                {A''} & {X''} & {B''}
                \arrow["\sigma", shift left=2, tail, from=1-1, to=1-2]
                \arrow["hf"', from=1-1, to=2-1]
                \arrow["s", shift left=2, two heads, from=1-2, to=1-1]
                \arrow["t", from=1-2, to=1-3]
                \arrow["{\beta\alpha}", from=1-2, to=2-2]
                \arrow["kg", from=1-3, to=2-3]
                \arrow["{\sigma''}", shift left=2, tail, from=2-1, to=2-2]
                \arrow["{s''}", shift left=2, two heads, from=2-2, to=2-1]
                \arrow["{t''}"', from=2-2, to=2-3]
            \end{tikzcd}
        \end{equation*}
        \item The composite of cells in the loose direction is given by the following diagram where some labels have been omitted, and the function $\alpha \times \beta$ is the uniquely induced morphism into the pullback of $p_{2}'$ and $q_{1}'$ determined by the equation $p_{2}' \circ \alpha \circ \pi_{X} = q_{1}' \circ \beta \circ \pi_{Y}$. 
        \begin{equation*}
            \begin{tikzcd}
                A & X & B & Y & C \\
                {A'} & {X'} & {B'} & {Y'} & {C'}
                \arrow["\varphi", shift left=2, tail, from=1-1, to=1-2]
                \arrow["f"', from=1-1, to=2-1]
                \arrow["{p_{1}}", shift left=2, two heads, from=1-2, to=1-1]
                \arrow["{p_{2}}", from=1-2, to=1-3]
                \arrow["\alpha", from=1-2, to=2-2]
                \arrow["\psi", shift left=2, tail, from=1-3, to=1-4]
                \arrow["g", from=1-3, to=2-3]
                \arrow["{q_{1}}", shift left=2, two heads, from=1-4, to=1-3]
                \arrow["{q_{2}}", from=1-4, to=1-5]
                \arrow["\beta", from=1-4, to=2-4]
                \arrow["h", from=1-5, to=2-5]
                \arrow["{\varphi'}", shift left=2, tail, from=2-1, to=2-2]
                \arrow["{p_{1}'}", shift left=2, two heads, from=2-2, to=2-1]
                \arrow["{p_{2}'}"', from=2-2, to=2-3]
                \arrow["{\psi'}", shift left=2, tail, from=2-3, to=2-4]
                \arrow["{q_{1}'}", shift left=2, two heads, from=2-4, to=2-3]
                \arrow["{q_{2}'}"', from=2-4, to=2-5]
            \end{tikzcd}
            \quad = \quad 
            \begin{tikzcd}[column sep = 2em]
                A & {X\times_{B}Y} & C \\
                {A'} & {X'\times_{B'}Y'} & {C'}
                \arrow[shift left=2, tail, from=1-1, to=1-2]
                \arrow["f"', from=1-1, to=2-1]
                \arrow["{p_{1}\pi_{X}}"{inner sep = 1ex}, shift left=2, two heads, from=1-2, to=1-1]
                \arrow["{q_{2}\pi_{Y}}", from=1-2, to=1-3]
                \arrow["{\alpha \times \beta}", from=1-2, to=2-2]
                \arrow["h", from=1-3, to=2-3]
                \arrow[shift left=2, tail, from=2-1, to=2-2]
                \arrow["{p_{1}'\pi_{X'}}"{inner sep = 1ex}, shift left=2, two heads, from=2-2, to=2-1]
                \arrow["{q_{2}'\pi_{Y'}}"', from=2-2, to=2-3]
            \end{tikzcd}
        \end{equation*}
        \item If we choose pullbacks in $\Set$ such that the pullback of the identity function is an identity function, then composition of loose morphisms is strictly unital. The associator isomorphisms are determined by the natural isomorphisms $(X \times_{B} Y) \times_{C} Z \cong X \times_{B} (Y \times_{C} Z)$ of pullbacks, however we omit the details. 
    \end{enumerate}
\end{definition}

While we have stated the definition of $\SMULT$ explicitly, it is also natural to ask if this double category admits a characterisation via some universal property. 
The rest of this section is dedicated to providing such a characterisation.
We begin by recalling two important double categories which are closely related to $\SMULT$. 

\begin{example}
\label{example:double-category-SPAN}
    The \emph{double category of spans}, denoted $\SPAN$, is the double category whose objects are sets, whose tight morphisms are functions, whose loose morphisms are spans, and whose cells correspond to diagrams of functions, as depicted below, such that $q_{1} \circ \alpha = f \circ p_{1}$ and $q_{2} \circ \alpha = g \circ p_{2}$. 
    \begin{equation*}
        \begin{tikzcd}[column sep = large]
            A & B \\
            C & D
            \arrow["{(p_{1},\, X,\, p_{2})}"{inner sep = 1ex}, "\shortmid"{marking}, from=1-1, to=1-2]
            \arrow[""{name=0, anchor=center, inner sep=0}, "f"', from=1-1, to=2-1]
            \arrow[""{name=1, anchor=center, inner sep=0}, "g", from=1-2, to=2-2]
            \arrow["{(q_{1},\, Y,\, q_{2})}"'{inner sep = 1ex}, "\shortmid"{marking}, from=2-1, to=2-2]
            \arrow["\alpha"{description}, draw=none, from=0, to=1]
        \end{tikzcd}
        \quad \coloneqq \quad 
        \begin{tikzcd}
            A & X & B \\
            C & Y & D
            \arrow["f"', from=1-1, to=2-1]
            \arrow["{p_{1}}"', from=1-2, to=1-1]
            \arrow["{p_{2}}", from=1-2, to=1-3]
            \arrow["\alpha", from=1-2, to=2-2]
            \arrow["g", from=1-3, to=2-3]
            \arrow["{q_{1}}", from=2-2, to=2-1]
            \arrow["{q_{2}}"', from=2-2, to=2-3]
        \end{tikzcd}
    \end{equation*}   
    There is a strict double functor $U_{2} \colon \SMULT \rightarrow \SPAN$ which is determined by the following assignment on cells. 
    \begin{equation}
    \label{equation:SMULT-to-SPAN}
        \begin{tikzcd}
            A & X & B \\
            C & Y & D
            \arrow["\varphi", shift left=2, tail, from=1-1, to=1-2]
            \arrow["f"', from=1-1, to=2-1]
            \arrow["{p_{1}}", shift left=2, two heads, from=1-2, to=1-1]
            \arrow["{p_{2}}", from=1-2, to=1-3]
            \arrow["\alpha", from=1-2, to=2-2]
            \arrow["g", from=1-3, to=2-3]
            \arrow["\psi", shift left=2, tail, from=2-1, to=2-2]
            \arrow["{q_{1}}", shift left=2, two heads, from=2-2, to=2-1]
            \arrow["{q_{2}}"', from=2-2, to=2-3]
        \end{tikzcd}
        \quad \longmapsto \quad
        \begin{tikzcd}
            A & X & B \\
            C & Y & D
            \arrow["f"', from=1-1, to=2-1]
            \arrow["{p_{1}}"', from=1-2, to=1-1]
            \arrow["{p_{2}}", from=1-2, to=1-3]
            \arrow["\alpha", from=1-2, to=2-2]
            \arrow["g", from=1-3, to=2-3]
            \arrow["{q_{1}}", from=2-2, to=2-1]
            \arrow["{q_{2}}"', from=2-2, to=2-3]
        \end{tikzcd}
    \end{equation}
\end{example}

\begin{example}
\label{example:double-category-SQSET}
    The \emph{double category of squares in $\Set$}, denoted $\SQ(\Set)$, is the double category whose objects are sets, whose tight and loose morphisms are both functions, and whose cells correspond to commuting squares of functions; this is a strict double category. 
    There is a strict double functor $U_{1} \colon \SMULT \to \SQ(\Set)$ which is determined by the following assignment on cells. 
    \begin{equation}
    \label{equation:SMULT-to_SQSET}
        \begin{tikzcd}
            A & X & B \\
            C & Y & D
            \arrow["\varphi", shift left=2, tail, from=1-1, to=1-2]
            \arrow["f"', from=1-1, to=2-1]
            \arrow["{p_{1}}", shift left=2, two heads, from=1-2, to=1-1]
            \arrow["{p_{2}}", from=1-2, to=1-3]
            \arrow["\alpha", from=1-2, to=2-2]
            \arrow["g", from=1-3, to=2-3]
            \arrow["\psi", shift left=2, tail, from=2-1, to=2-2]
            \arrow["{q_{1}}", shift left=2, two heads, from=2-2, to=2-1]
            \arrow["{q_{2}}"', from=2-2, to=2-3]
        \end{tikzcd}
        \quad \longmapsto \quad
        \begin{tikzcd}
            A & B \\
            C & D
            \arrow["p_{2} \circ \varphi", from=1-1, to=1-2]
            \arrow["f"', from=1-1, to=2-1]
            \arrow["g", from=1-2, to=2-2]
            \arrow["q_{2} \circ \psi"', from=2-1, to=2-2]
        \end{tikzcd}
    \end{equation}
\end{example}

The double categories $\SPAN$ and $\SQ(\Set)$ related by a strict double functor $K_{\ast} \colon \SQ(\Set) \rightarrow \SPAN$ with the following assignment on cells. 
    \begin{equation}
    \label{equation:SQSET-to-SPAN}
        \begin{tikzcd}
            A & B \\
            C & D
            \arrow["h", from=1-1, to=1-2]
            \arrow["f"', from=1-1, to=2-1]
            \arrow["g", from=1-2, to=2-2]
            \arrow["k"', from=2-1, to=2-2]
        \end{tikzcd}
        \quad \longmapsto \quad
        \begin{tikzcd}
            A & A & B \\
            C & C & D
            \arrow["f"', from=1-1, to=2-1]
            \arrow["{1_{A}}"', from=1-2, to=1-1]
            \arrow["h", from=1-2, to=1-3]
            \arrow["f", from=1-2, to=2-2]
            \arrow["g", from=1-3, to=2-3]
            \arrow["{1_{C}}", from=2-2, to=2-1]
            \arrow["k"', from=2-2, to=2-3]
        \end{tikzcd}
    \end{equation}
The notation $K_{\ast}$ for this double functor is to remind the reader that it provides a choice of \emph{companions} \cite{DawsonParePronk2010, GrandisPare2004} for the double category $\SPAN$.
To state the precise relationship between the double functors \eqref{equation:SMULT-to-SPAN}, \eqref{equation:SMULT-to_SQSET}, and \eqref{equation:SQSET-to-SPAN}, we must introduce a new definition. 

\subsection{Globular transformations}

Recall that a (tight) transformation $\tau \colon F \Rightarrow G$ between double functors $F, G \colon \AA \rightarrow \XX$ provides, for each object $A \in \AA$, a tight morphism $\tau_{A} \colon FA \to GA$ in $\XX$ and, for each loose morphism $p \colon A \lto B$ in $\AA$, a cell $\tau_{p}$ in $\XX$ as follows. 
    \begin{equation*}
        \begin{tikzcd}
            FA & FB \\
            GA & GB
            \arrow["Fp"{inner sep = 1ex}, "\shortmid"{marking}, from=1-1, to=1-2]
            \arrow[""{name=0, anchor=center, inner sep=0}, "{\tau_{A}}"', from=1-1, to=2-1]
            \arrow[""{name=1, anchor=center, inner sep=0}, "{\tau_{B}}", from=1-2, to=2-2]
            \arrow["Gp"'{inner sep = 1ex}, "\shortmid"{marking}, from=2-1, to=2-2]
            \arrow["{\tau_{p}}"{description}, draw=none, from=0, to=1]
        \end{tikzcd}
    \end{equation*}
We wish to consider transformations with a certain property which is analogous to that of an \emph{icon} between functors of bicategories \cite{Lack2010}. 

\begin{definition}
\label{definition:globular-transformation}
    Given (lax) double functors $F, G \colon \AA \rightarrow \XX$, a (tight) transformation $\tau \colon F \Rightarrow G$ is called \emph{globular} if the tight morphism $\tau_{A} \colon FA \rightarrow GA$ is an identity for each object $A \in \AA$,
\end{definition}

A globular transformation $\tau \colon F \Rightarrow G$ implies that $F$ and $G$ have the same assignment on objects, and by naturality, the same assignment on tight morphisms. 
The term \emph{globular} is chosen as the components of $\tau$ at a loose morphism are globular cells in the double category. 
Globular transformations are closed under composition and whiskering. 

\begin{lemma}
\label{lemma:globular-SMULT-SPAN-SQSET}
    There is a globular transformation between double functors as follows. 
    \begin{equation}
    \label{equation:SMULT-globular-transformation}
        \begin{tikzcd}[column sep = small]
            & \SMULT \\
            {\SQ(\Set)} && \SPAN
            \arrow[""{name=0, anchor=center, inner sep=0}, "{U_{1}}"', from=1-2, to=2-1]
            \arrow[""{name=1, anchor=center, inner sep=0}, "{U_{2}}", from=1-2, to=2-3]
            \arrow["{K_{\ast}}"', from=2-1, to=2-3]
            \arrow["\sigma", shift right=2, shorten <=10pt, shorten >=10pt, Rightarrow, from=0, to=1]
        \end{tikzcd}
    \end{equation}
\end{lemma}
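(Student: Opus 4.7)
The plan is to define $\sigma$ directly by its cell components and then verify the axioms of a tight transformation, exploiting that $K_{\ast}$, $U_1$, and $U_2$ are all strict double functors (so all laxators are identities) and that $\sigma$ is to be globular (so its components at objects are forced to be identities on sets).

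For each split multivalued function $(s, X, t, \sigma) \colon A \lto B$, I take the component of the transformation at this loose morphism to be the cell already exhibited in the introduction: the top span is $K_{\ast} U_1(s, X, t, \sigma) = (1_A, A, t\sigma)$, the bottom span is $U_2(s, X, t, \sigma) = (s, X, t)$, the side tight morphisms are identities, and the middle tight morphism is the section $\sigma \colon A \to X$. This is a well-defined $\SPAN$-cell precisely because $s \circ \sigma = 1_A$.

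It remains to verify the transformation axioms. Tight naturality is automatic, since $K_{\ast} U_1$ and $U_2$ agree on objects (both act as the identity on sets) and on tight morphisms (both act as the identity on functions). For naturality with respect to a cell $\alpha$ as in \eqref{equation:SMULT-cell}, the two pastings $\sigma_{\text{bottom}} \cdot K_{\ast} U_1(\alpha)$ and $U_2(\alpha) \cdot \sigma_{\text{top}}$ have a common boundary, and a cell in $\SPAN$ is determined by its middle tight morphism; comparing these reduces exactly to the equation $\alpha \circ \varphi = \psi \circ f$ built into the definition of a cell in $\SMULT$. Unit compatibility is immediate, as both sides become the identity cell on the loose identity $(1_A, A, 1_A)$. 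For loose composition compatibility, I compare the middle tight morphisms of the component at $p \odot q$ and the horizontal composite of components; the former is by definition the section $\langle 1_X, \psi \circ p_2 \rangle \circ \varphi$ of the loose composite in $\SMULT$, while the latter is the unique map into the pullback $X \times_B Y$ characterised by the same two projection equations, so they coincide.

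I do not expect a real obstacle: every axiom reduces to a comparison of tight morphisms into a pullback, and the structural equations for $\SMULT$-cells together with the definition of loose composition in $\SMULT$ have been arranged so that these comparisons hold on the nose. The only point of care is bookkeeping --- recognising that a $\SPAN$-cell is determined by its middle tight morphism, whereupon each axiom collapses to a short diagram chase or a direct application of the universal property of a pullback.
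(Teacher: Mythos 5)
Your proposal is correct and takes essentially the same approach as the paper, which defines $\sigma$ by the identity components on objects and the cell with middle map $\sigma$ at each loose morphism, and then asserts the verification is straightforward. Your additional detail — reducing each axiom to a comparison of middle maps via the equation $\alpha \circ \varphi = \psi \circ f$ and the universal property of the pullback — correctly fills in exactly what the paper leaves as routine.
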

\begin{proof}
    We define the globular transformation $\sigma \colon K_{\ast}U_{1} \Rightarrow U_{2}$ as follows. 
    For each object $A$ in $\SMULT$, we have the identity function $1_{A} \colon A \to A$ in $\SPAN$, and for each loose morphism \eqref{equation:split-mutlivalued-function} in $\SMULT$, we have the following globular cell in $\SPAN$. 
    \begin{equation*}
        \begin{tikzcd}
            A & A & B \\
            A & X & B
            \arrow["{1_{A}}"', from=1-1, to=2-1]
            \arrow["{1_{A}}"', from=1-2, to=1-1]
            \arrow["{t \sigma}", from=1-2, to=1-3]
            \arrow["\sigma", from=1-2, to=2-2]
            \arrow["{1_{B}}", from=1-3, to=2-3]
            \arrow["s", from=2-2, to=2-1]
            \arrow["t"', from=2-2, to=2-3]
        \end{tikzcd}
    \end{equation*}
    It is straightforward to verify that $\sigma \colon K_{\ast}U_{1} \Rightarrow U_{2}$ is natural with respect to the cells in $\SMULT$.
\end{proof}

\subsection{The universal property of the double category of split multivalued functions}

We now show that the globular transformation in Lemma~\ref{lemma:globular-SMULT-SPAN-SQSET} may be understood as universal globular cone over the double functor $K_{\ast} \colon \SQ(\Set) \to \SPAN$, and characterises the double category $\SMULT$ as a certain kind of limit. 

\begin{theorem}[1-dimensional universal property of $\SMULT$]
\label{theorem:1d-universal-property-SMULT}
    Given lax double functors $F_{1} \colon \BB \to \SQ(\Set)$ and $F_{2} \colon \BB \to \SPAN$ and a globular transformation 
    \begin{equation*}
        \begin{tikzcd}[column sep = small]
            & \BB \\
            {\SQ(\Set)} && \SPAN
            \arrow[""{name=0, anchor=center, inner sep=0}, "{F_{1}}"', from=1-2, to=2-1]
            \arrow[""{name=0p, anchor=center, inner sep=0}, phantom, from=1-2, to=2-1, start anchor=center, end anchor=center]
            \arrow[""{name=1, anchor=center, inner sep=0}, "{F_{2}}", from=1-2, to=2-3]
            \arrow[""{name=1p, anchor=center, inner sep=0}, phantom, from=1-2, to=2-3, start anchor=center, end anchor=center]
            \arrow["{{K_{\ast}}}"', from=2-1, to=2-3]
            \arrow["\varphi", shift right=2, shorten <=8pt, shorten >=8pt, Rightarrow, from=0p, to=1p]
        \end{tikzcd}
    \end{equation*}
    there exists a unique lax double functor $F \colon \BB \to \SMULT$ such that $\sigma \cdot F = \varphi$, where $\sigma \colon K_{\ast} U_{1} \Rightarrow U_{2}$ is defined in Lemma~\ref{lemma:globular-SMULT-SPAN-SQSET}. 
\end{theorem}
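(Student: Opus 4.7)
The plan is to construct $F$ componentwise, guided by the fact that a loose morphism in $\SMULT$ is a span equipped with a section, and a cell in $\SMULT$ is a cell in $\SPAN$ whose apex function additionally respects the chosen sections. For uniqueness, the equation $\sigma \cdot F = \varphi$ forces equality of the source and target of the two globular transformations; since $K_{\ast}$ is injective on objects, tight morphisms, and loose morphisms, this yields $U_{1} F = F_{1}$ and $U_{2} F = F_{2}$. These equations determine $F$ on objects, tight morphisms, and the underlying spans of loose morphisms and cells. The only remaining datum, the section $\sigma_{p}$ of each $F(p)$, is forced to be the apex function of the globular cell $\varphi_{p}$ by the component-wise part of $\sigma \cdot F = \varphi$.

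For existence, I would set $F(A) = F_{2}(A)$ and $F(f) = F_{2}(f)$. On a loose morphism $p \colon A \lto B$, define $F(p) = (s, X, t, \sigma_{p})$, where $(s, X, t) = F_{2}(p)$ and $\sigma_{p}$ is the apex function of $\varphi_{p}$; the equations $s \sigma_{p} = 1$ and $t \sigma_{p} = F_{1}(p)$ are precisely the commutativity conditions for $\varphi_{p}$ to be a globular cell $K_{\ast} F_{1}(p) \Rightarrow F_{2}(p)$ in $\SPAN$ with source span $(1, F(A), F_{1}(p))$. For a cell $\theta$ of $\BB$ with boundary $(f, p, g, q)$, let $F(\theta)$ be the cell in $\SMULT$ whose underlying apex function is $F_{2}(\theta)$; the required section-compatibility equation $F_{2}(\theta) \circ \sigma_{p} = \sigma_{q} \circ F(f)$ is then exactly the naturality equation of $\varphi$ at $\theta$. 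Functoriality on tight morphisms and cell composition transfers directly from $F_{2}$.

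To equip $F$ with its lax structure, take the unitor and composition laxator cells of $F$ to be those of $F_{2}$ on the underlying span data; note that $F_{1}$ is automatically strict, since globular cells in $\SQ(\Set)$ are equalities, so nothing is required on the $U_{1}$-side. The section-respecting condition for the compositor at a pair $(p, q)$ unwinds to the equation $\mu_{p, q} \circ \langle \sigma_{p}, \sigma_{q} \circ F_{1}(p)\rangle = \sigma_{q \cdot p}$, where $\mu_{p, q}$ is the laxator of $F_{2}$; this equation, together with the analogous unit statement, is precisely the coherence axiom that $\varphi$ must satisfy with respect to the loose composition and unit laxators of $K_{\ast} F_{1}$ and $F_{2}$, as part of the data of being a transformation of lax double functors. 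The pentagon and unit triangle axioms for $F$ then follow from those of $F_{2}$, since the globular cells of $\SMULT$ involved are uniquely determined by their underlying apex functions once the sections are fixed. The identity $\sigma \cdot F = \varphi$ holds by construction.

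The main obstacle is the laxator bookkeeping: identifying the pullback-induced map $\langle \sigma_{p}, \sigma_{q} \circ F_{1}(p)\rangle \colon F(A) \to X_{p} \times_{F(B)} X_{q}$ as the canonical section of the composite split multivalued function $F(p) \cdot F(q)$, and matching this against the coherence axioms of $\varphi$. Everything else is structural and reflects the fact that $\SMULT$ has been designed to glue $\SPAN$ and $\SQ(\Set)$ along $K_{\ast}$.
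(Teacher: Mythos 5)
Your proposal is correct and follows essentially the same route as the paper's proof: take the shared object/tight-morphism action forced by globularity, extract the section of each loose morphism from the apex of the globular cell $\varphi_{p}$, transport the cells and laxators from $F_{2}$, and use the naturality and coherence axioms of $\varphi$ to verify the section-compatibility conditions (your equation $\mu_{p,q}\circ\langle\sigma_{p},\,\sigma_{q}\circ F_{1}(p)\rangle=\sigma_{q\cdot p}$ is exactly the paper's $\varphi_{v\circ u}=\mu_{(u,v)}\circ\langle 1,\varphi_{v}\circ t_{u}\rangle\circ\varphi_{u}$). Your uniqueness argument via injectivity of $K_{\ast}$ is also a sound filling-in of the step the paper leaves as "straightforward".
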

\begin{proof}
    Since $\varphi$ is a globular transformation, $F_{1}$ and $F_{2}$ agree on objects and tight morphisms; we denote this shared action by $F$. 
    Given a loose morphism $u \colon A \lto B$ in $\BB$, we denote the component of $\varphi$ at $u$ by the following cell in $\SPAN$. 
    \begin{equation*}
        \begin{tikzcd}
            FA & FB \\
            FA & FB
            \arrow["{F_{1}u}"{inner sep = 1ex}, "\shortmid"{marking}, from=1-1, to=1-2]
            \arrow[""{name=0, anchor=center, inner sep=0}, Rightarrow, no head, from=1-1, to=2-1]
            \arrow[""{name=1, anchor=center, inner sep=0}, Rightarrow, no head, from=1-2, to=2-2]
            \arrow["{F_{2}u}"'{inner sep = 1ex}, "\shortmid"{marking}, from=2-1, to=2-2]
            \arrow["{\varphi u}"{description}, draw=none, from=0, to=1]
        \end{tikzcd}
        \quad \coloneqq \quad 
        \begin{tikzcd}
            FA & FA & FB \\
            FA & {F(u)} & FB
            \arrow[Rightarrow, no head, from=1-1, to=2-1]
            \arrow[Rightarrow, no head, from=1-2, to=1-1]
            \arrow["{t_{u}\varphi_{u}}", from=1-2, to=1-3]
            \arrow["{\varphi_{u}}", from=1-2, to=2-2]
            \arrow[Rightarrow, no head, from=1-3, to=2-3]
            \arrow["{s_{u}}", from=2-2, to=2-1]
            \arrow["{t_{u}}"', from=2-2, to=2-3]
        \end{tikzcd}
    \end{equation*}
    This defines a split multivalued function $(s_{u}, F(u), t_{u}, \varphi_{u}) \colon FA \lto FB$ for each loose morphism $u \colon A \lto B$. 
    Given a cell 
    \begin{equation*}
        \begin{tikzcd}
            A & B \\
            C & D
            \arrow["u"{inner sep = 1ex}, "\shortmid"{marking}, from=1-1, to=1-2]
            \arrow[""{name=0, anchor=center, inner sep=0}, "f"', from=1-1, to=2-1]
            \arrow[""{name=1, anchor=center, inner sep=0}, "g", from=1-2, to=2-2]
            \arrow["v"'{inner sep = 1ex}, "\shortmid"{marking}, from=2-1, to=2-2]
            \arrow["\alpha"{description}, draw=none, from=0, to=1]
        \end{tikzcd}
    \end{equation*}
    in $\BB$, the action of the lax double functor $F_{2}$ provides a function $F_{2}\alpha \colon F(u) \rightarrow F(v)$ such that $s_{v} \circ F_{2}\alpha = Ff \circ s_{u}$ and $t_{v} \circ F_{2}\alpha = Fg \circ t_{u}$.
    Furthermore, naturality of $\varphi$ implies that the equation $F_{2}\alpha \circ \varphi_{u} = \varphi_{v} \circ Ff$ holds. 
    Thus, for each cell $\alpha$, as above, we have the following cell in $\SMULT$ (where we write $F_{2}\alpha$ as $F\alpha$ for simplicity). 
    \begin{equation*}
        \begin{tikzcd}
            FA & {F(u)} & FB \\
            FC & {F(v)} & FD
            \arrow["{\varphi_{u}}", shift left=2, tail, from=1-1, to=1-2]
            \arrow["Ff"', from=1-1, to=2-1]
            \arrow["{s_{u}}", shift left=2, two heads, from=1-2, to=1-1]
            \arrow["{t_{u}}", from=1-2, to=1-3]
            \arrow["{F\alpha}", from=1-2, to=2-2]
            \arrow["Fg", from=1-3, to=2-3]
            \arrow["{\varphi_{u}}", shift left=2, tail, from=2-1, to=2-2]
            \arrow["{s_{v}}", shift left=2, two heads, from=2-2, to=2-1]
            \arrow["{t_{v}}"', from=2-2, to=2-3]
        \end{tikzcd}
    \end{equation*}
    This completes the specification of the (nominal) lax double functor $F \colon \BB \to \SMULT$ on objects, tight morphisms, loose morphisms, and cells. 
    It remains to define the unit and composition comparison cells which provide the lax structure. 

    We first note that since $\SQ(\Set)$ is a thin double category, $F_{1} \colon \BB \to \SQ(\Set)$ is necessarily a strict double functor. 
    For each object $A$ and each composable pair of loose morphisms $u \colon A \lto B$ and $v \colon B \lto C$ in $\BB$, the unit $\eta$ and composition $\mu$ comparison cells for the lax double functor $F_{2}$ are given by the following diagrams. 
    \begin{equation*}
        \begin{tikzcd}
            FA & FA & FA \\
            FA & {F(\id_{A})} & FA
            \arrow[Rightarrow, no head, from=1-1, to=2-1]
            \arrow["{1_{FA}}"', from=1-2, to=1-1]
            \arrow["{1_{FA}}", from=1-2, to=1-3]
            \arrow["{\eta_{A}}", from=1-2, to=2-2]
            \arrow[Rightarrow, no head, from=1-3, to=2-3]
            \arrow["{s_{A}}", from=2-2, to=2-1]
            \arrow["{t_{A}}"', from=2-2, to=2-3]
        \end{tikzcd}
        \qquad \qquad 
        \begin{tikzcd}
            FA & {F(u, v)} & FC \\
            FA & {F(v \circ u)} & FC
            \arrow[Rightarrow, no head, from=1-1, to=2-1]
            \arrow["{s_{u} \pi_{u}}"', from=1-2, to=1-1]
            \arrow["{t_{v}\pi_{v}}", from=1-2, to=1-3]
            \arrow["{\mu_{(u, v)}}", from=1-2, to=2-2]
            \arrow[Rightarrow, no head, from=1-3, to=2-3]
            \arrow["{s_{u \circ v}}", from=2-2, to=2-1]
            \arrow["{t_{u \circ v}}"', from=2-2, to=2-3]
        \end{tikzcd}
    \end{equation*}
    We adopt the notation $s_{A} \coloneqq s_{\id_{A}}$ and $t_{A} \coloneqq t_{\id_{A}}$ for ease of readability, and let $F(u, v)$ denote the pullback of $t_{u} \colon F(u) \to FB$ along $s_{v} \colon F(v) \to FB$; the corresponding projections $\pi_{u}$ and $\pi_{v}$, respectively. 
    The coherence of the transformation $\varphi$ with the comparison cells implies that $\eta_{A} = \varphi_{\id_{A}}$ and $\varphi_{v \circ u} = \mu_{(u, v)} \circ \langle 1, \varphi_{v} \circ t_{u} \rangle \circ \varphi_{u}$ where $\langle 1, \varphi_{v} \circ t_{u} \rangle \colon F(u) \to F(u, v)$ is the uniquely induced morphism into the pullback. 

    We define the unit and composition comparison cells for the lax double functor $F \colon \BB \to \SMULT$ as follows. 
    For each object $A$ and composable pair of loose morphisms $u \colon A \lto B$ and $v \colon B \lto C$ in $\BB$, we have the following cells in $\SMULT$. 
    \begin{equation*}
        \begin{tikzcd}
            FA & FA & FA \\
            FA & {F(\id_{A})} & FA
            \arrow["{1_{FA}}", shift left=2, tail, from=1-1, to=1-2]
            \arrow[Rightarrow, no head, from=1-1, to=2-1]
            \arrow["{{1_{FA}}}", shift left=2, two heads, from=1-2, to=1-1]
            \arrow["{{1_{FA}}}", from=1-2, to=1-3]
            \arrow["{{\eta_{A}}}", from=1-2, to=2-2]
            \arrow[Rightarrow, no head, from=1-3, to=2-3]
            \arrow["{\varphi_{A}}", shift left=2, tail, from=2-1, to=2-2]
            \arrow["{{s_{A}}}", shift left=2, two heads, from=2-2, to=2-1]
            \arrow["{{t_{A}}}"', from=2-2, to=2-3]
        \end{tikzcd}
        \qquad \qquad 
        \begin{tikzcd}[column sep = large]
            FA & {F(u, v)} & FC \\
            FA & {F(v \circ u)} & FC
            \arrow["{\langle 1, \varphi_{v}t_{u} \rangle \circ \varphi_{u}}", shift left=2, tail, from=1-1, to=1-2]
            \arrow[Rightarrow, no head, from=1-1, to=2-1]
            \arrow["{{s_{u} \pi_{u}}}", shift left=2, two heads, from=1-2, to=1-1]
            \arrow["{{t_{v}\pi_{v}}}", from=1-2, to=1-3]
            \arrow["{{\mu_{u, v}}}", from=1-2, to=2-2]
            \arrow[Rightarrow, no head, from=1-3, to=2-3]
            \arrow["{\varphi_{v \circ u}}", shift left=2, tail, from=2-1, to=2-2]
            \arrow["{{s_{u \circ v}}}", shift left=2, two heads, from=2-2, to=2-1]
            \arrow["{{t_{u \circ v}}}"', from=2-2, to=2-3]
        \end{tikzcd}
    \end{equation*}

    This completes the description of the data required to define the lax double functor $F \colon \BB \to \SMULT$. 
    It is straightforward to show that this lax double functor is well-defined, and that the conditions $U_{1} F = F_{1}$, $U_{2}F = F_{2}$ and $\sigma \cdot F = \varphi$ hold. 
    Proof of uniqueness is similarly straightforward, however we omit the details. 
\end{proof}
    
\begin{theorem}[$2$-dimensional universal property of $\SMULT$]
\label{theorem:2d-universal-property-SMULT}
    Given a double functor $J \colon \BB \to \DD$, lax double functors $F_{1}$, $F_{2}$, $G_{1}$, and $G_{2}$, transformations $\theta_{1}$ and $\theta_{2}$, and globular transformations $\varphi$ and $\psi$ such that 
    \begin{equation*}
        \begin{tikzcd}[column sep = small]
            \BB && \DD \\
            & {\SQ(\Set)} && \SPAN
            \arrow["J", from=1-1, to=1-3]
            \arrow[""{name=0, anchor=center, inner sep=0}, "{F_{1}}"', from=1-1, to=2-2]
            \arrow[""{name=0p, anchor=center, inner sep=0}, phantom, from=1-1, to=2-2, start anchor=center, end anchor=center]
            \arrow[""{name=1, anchor=center, inner sep=0}, "{G_{1}}"{description}, from=1-3, to=2-2]
            \arrow[""{name=1p, anchor=center, inner sep=0}, phantom, from=1-3, to=2-2, start anchor=center, end anchor=center]
            \arrow[""{name=1p, anchor=center, inner sep=0}, phantom, from=1-3, to=2-2, start anchor=center, end anchor=center]
            \arrow[""{name=2, anchor=center, inner sep=0}, "{G_{2}}", from=1-3, to=2-4]
            \arrow[""{name=2p, anchor=center, inner sep=0}, phantom, from=1-3, to=2-4, start anchor=center, end anchor=center]
            \arrow["{K_{\ast}}"', from=2-2, to=2-4]
            \arrow["{\theta_{1}}"', shift left=2, shorten <=8pt, shorten >=8pt, Rightarrow, from=0p, to=1p]
            \arrow["\psi", shift right=2, shorten <=8pt, shorten >=8pt, Rightarrow, from=1p, to=2p]
        \end{tikzcd}
        \quad = \quad
        \begin{tikzcd}[column sep = small]
            & \BB && \DD \\
            {\SQ(\Set)} && \SPAN
            \arrow["J", from=1-2, to=1-4]
            \arrow[""{name=0, anchor=center, inner sep=0}, "{F_{1}}"', from=1-2, to=2-1]
            \arrow[""{name=0p, anchor=center, inner sep=0}, phantom, from=1-2, to=2-1, start anchor=center, end anchor=center]
            \arrow[""{name=1, anchor=center, inner sep=0}, "{F_{2}}"{description}, from=1-2, to=2-3]
            \arrow[""{name=1p, anchor=center, inner sep=0}, phantom, from=1-2, to=2-3, start anchor=center, end anchor=center]
            \arrow[""{name=1p, anchor=center, inner sep=0}, phantom, from=1-2, to=2-3, start anchor=center, end anchor=center]
            \arrow[""{name=2, anchor=center, inner sep=0}, "{G_{2}}", from=1-4, to=2-3]
            \arrow[""{name=2p, anchor=center, inner sep=0}, phantom, from=1-4, to=2-3, start anchor=center, end anchor=center]
            \arrow["{K_{\ast}}"', from=2-1, to=2-3]
            \arrow["\varphi", shift right=2, shorten <=8pt, shorten >=8pt, Rightarrow, from=0p, to=1p]
            \arrow["{\theta_{2}}"', shift left=2, shorten <=7pt, shorten >=7pt, Rightarrow, from=1p, to=2p]
        \end{tikzcd}
    \end{equation*}
    there exists a unique transformation between lax double functors
    \begin{equation*}
        \begin{tikzcd}[column sep=small]
            \BB && \DD \\
            & \SMULT
            \arrow["J", from=1-1, to=1-3]
            \arrow[""{name=0, anchor=center, inner sep=0}, "F"', from=1-1, to=2-2]
            \arrow[""{name=0p, anchor=center, inner sep=0}, phantom, from=1-1, to=2-2, start anchor=center, end anchor=center]
            \arrow[""{name=1, anchor=center, inner sep=0}, "G", from=1-3, to=2-2]
            \arrow[""{name=1p, anchor=center, inner sep=0}, phantom, from=1-3, to=2-2, start anchor=center, end anchor=center]
            \arrow["\theta"', shift left=2, shorten <=8pt, shorten >=8pt, Rightarrow, from=0p, to=1p]
        \end{tikzcd}
    \end{equation*}
    such that $U_{1} \cdot \theta = \theta_{1}$, $U_{2} \cdot \theta = \theta_{2}$, $\sigma \cdot F = \varphi$ and $\sigma \cdot G = \psi$, where $\sigma \colon K_{\ast}U_{1} \Rightarrow U_{2}$ is defined in Lemma~\ref{lemma:globular-SMULT-SPAN-SQSET}.
\end{theorem}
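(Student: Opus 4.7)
The plan is to construct $\theta$ componentwise, building on the $1$-dimensional universal property (Theorem~\ref{theorem:1d-universal-property-SMULT}) which already yields the unique lax double functors $F \colon \BB \to \SMULT$ and $G \colon \DD \to \SMULT$ with the required properties. The task then reduces to defining a transformation $\theta \colon F \Rightarrow GJ$ whose tight and cell components are forced by those of $\theta_1$ and $\theta_2$.

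For the tight components, because tight morphisms in $\SMULT$ are just functions and both $U_1$ and $U_2$ act as the identity on tight morphisms, the conditions $U_1 \cdot \theta = \theta_1$ and $U_2 \cdot \theta = \theta_2$ dictate $\theta_A \coloneqq (\theta_1)_A$ for each object $A \in \BB$; evaluating the hypothesised pasting equality at objects confirms that $(\theta_1)_A = (\theta_2)_A$, so this assignment is consistent.

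For each loose morphism $u \colon A \lto B$ in $\BB$, I would then declare $\theta_u$ to be the candidate cell in $\SMULT$ whose underlying function $F(u) \to G(Ju)$ is $(\theta_2)_u$. The boundary equations involving the source and target legs are precisely those already satisfied by $(\theta_2)_u$ as a cell in $\SPAN$; what remains is the section-compatibility equation
\[
    (\theta_2)_u \circ \varphi_u = \psi_{Ju} \circ \theta_A.
\]
I expect this to be the main step. Unpacking both pastings at $u$ using the explicit description of $\sigma$ from Lemma~\ref{lemma:globular-SMULT-SPAN-SQSET}, the two resulting cells in $\SPAN$ share their source spans (the companion of $F_1 u$), their target spans ($F_2 u$ precomposed with $GJ$), and their outer verticals $\theta_A$ and $\theta_B$, while their middle arrows come out as $\psi_{Ju} \circ \theta_A$ and $(\theta_2)_u \circ \varphi_u$ respectively; equality of these cells therefore yields the required equation on the nose.

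Finally, I would verify that the assignment $(A \mapsto \theta_A, u \mapsto \theta_u)$ assembles into a transformation of lax double functors by checking naturality with respect to cells in $\BB$ and compatibility with the unit and composition comparison cells produced in the proof of Theorem~\ref{theorem:1d-universal-property-SMULT}. Each such axiom is an equation between cells in $\SMULT$, and because a cell in $\SMULT$ with fixed boundary is determined by its underlying function, every such equation reduces to the corresponding equations for $\theta_1$ in $\SQ(\Set)$ and $\theta_2$ in $\SPAN$, which hold by hypothesis. Uniqueness of $\theta$ follows at once from $U_2 \cdot \theta = \theta_2$, which pins down the underlying function of every component. The only bookkeeping I anticipate as nontrivial is tracking the pullbacks that appear in the composition comparison cells, which has already been dealt with in the proof of Theorem~\ref{theorem:1d-universal-property-SMULT}.
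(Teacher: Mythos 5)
Your proposal is correct and follows essentially the same route as the paper: invoke Theorem~\ref{theorem:1d-universal-property-SMULT} to obtain $F$ and $G$, define the component of $\theta$ at a loose morphism $u$ to be the cell of $\SMULT$ with underlying function $(\theta_{2})_{u}$, extract the section-compatibility equation $(\theta_{2})_{u} \circ \varphi_{u} = \psi_{Ju} \circ \theta_{A}$ from the hypothesised pasting equality, and let naturality, coherence, and uniqueness reduce to the corresponding facts for $\theta_{1}$ and $\theta_{2}$. Your observation that uniqueness is pinned down by $U_{2} \cdot \theta = \theta_{2}$ is a slightly more explicit justification than the paper offers, but the argument is the same.
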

\begin{proof}
    By Theorem~\ref{theorem:1d-universal-property-SMULT}, there exists unique lax double functors $F \colon \BB \to \SMULT$ and $G \colon \DD \to \SMULT$ such that $\sigma \cdot F = \varphi$ and $\sigma \cdot G = \psi$.

    For each loose morphism $u \colon A \lto B$ in $\BB$, the transformation $\theta_{2}$ provides a function $\theta_{2}u \colon F(u) \to GJ(u)$ such that $s_{Ju} \circ \theta_{2}u = \theta_{A} \circ s_{u}$ and $t_{Ju} \circ \theta_{2}u = \theta_{B} \circ t_{u}$. 
    We abuse notation by using $s$ and $t$ for both $F_{2}u$ and $G_{2}u$. 
    By the main assumption, we also have that $\psi_{Ju} \circ \theta_{A} = \theta_{2}u \circ \varphi_{u}$. 
    Therefore, we may define a transformation $\theta \colon F \Rightarrow GJ$ whose component at a loose morphism $u \colon A \lto B$ in $\B$ is given by the following cell in $\SMULT$ (where we write $\theta u$ instead of $\theta_{2}u$ for simplicity). 
    \begin{equation*}
        \begin{tikzcd}
            FA & {F(u)} & FB \\
            GJA & {GJ(u)} & GJB
            \arrow["{\varphi_{u}}", shift left=2, tail, from=1-1, to=1-2]
            \arrow["{\theta_{A}}"', from=1-1, to=2-1]
            \arrow["{s_{u}}", shift left=2, two heads, from=1-2, to=1-1]
            \arrow["{t_{u}}", from=1-2, to=1-3]
            \arrow["{\theta u}"', from=1-2, to=2-2]
            \arrow["{\theta_{B}}", from=1-3, to=2-3]
            \arrow["{\psi_{Ju}}", shift left=2, tail, from=2-1, to=2-2]
            \arrow["{s_{Ju}}", shift left=2, two heads, from=2-2, to=2-1]
            \arrow["{t_{Ju}}"', from=2-2, to=2-3]
        \end{tikzcd}
    \end{equation*}

    Naturality and coherence with the unit and composition comparison cells for $\theta$ is inherited from the transformations $\theta_{1}$ and $\theta_{2}$. 
    Similarly, it is straightforward to show that whiskering $\theta$ with $U_{1} \colon \SMULT \to \SQ(\Set)$ and $U_{2} \colon \SMULT \to \SPAN$ yields $\theta_{1}$ and $\theta_{2}$, respectively. 
    Proof of uniqueness is also easy, and we omit the details. 
\end{proof}

Theorem~\ref{theorem:1d-universal-property-SMULT} and Theorem~\ref{theorem:2d-universal-property-SMULT} completely characterise the double category of split multivalued functions up to isomorphism. 
The universal properties presented are almost identical to those of a \emph{comma double category} \cite{GrandisPare1999}, except that we restrict to a cone which involves a globular transformation. 
An analogous construction of this certain kind of limit is possible by replacing $K_{\ast} \colon \SQ(\Set) \to \SPAN$ with any (colax) double functor, however this level of generality is outside the scope of this paper.

\subsection{Indexed split multivalued functions}

We now use the universal properties of the double category $\SMULT$ to establish an isomorphism of categories. 
We then restrict this along the inclusion $\LO \colon \Cat \to \Dbl$ to obtain the isomorphism required for our main result. 

\begin{definition}
\label{definition:globular-cone-Kast}
    Let $\GlobCone(\Dbl, K_{\ast})$ denote the category whose objects are quadruples $(\BB, F_{1}, F_{2}, \varphi)$ consisting of a double category $\BB$, lax double functors $F_{1} \colon \BB \to \SQ(\Set)$ and $F_{2} \colon \BB \to \SPAN$, and a globular transformation $\varphi \colon K_{\ast} F_{1} \Rightarrow F_{2}$, and whose morphisms $(J, \theta_{1}, \theta_{2}) \colon (\BB, F_{1}, F_{2}, \varphi) \to (\DD, G_{1}, G_{2}, \psi)$ consist of a double functor $J \colon \BB \to \DD$ and tight transformations $\theta_{1} \colon F_{1} \Rightarrow G_{1}J$ and $\theta_{2} \colon F_{2} \Rightarrow G_{2}J$ such that the following equation holds. 
    \begin{equation*}
        \begin{tikzcd}[column sep = small]
            \BB && \DD \\
            & {\SQ(\Set)} && \SPAN
            \arrow["J", from=1-1, to=1-3]
            \arrow[""{name=0, anchor=center, inner sep=0}, "{F_{1}}"', from=1-1, to=2-2]
            \arrow[""{name=0p, anchor=center, inner sep=0}, phantom, from=1-1, to=2-2, start anchor=center, end anchor=center]
            \arrow[""{name=1, anchor=center, inner sep=0}, "{G_{1}}"{description}, from=1-3, to=2-2]
            \arrow[""{name=1p, anchor=center, inner sep=0}, phantom, from=1-3, to=2-2, start anchor=center, end anchor=center]
            \arrow[""{name=1p, anchor=center, inner sep=0}, phantom, from=1-3, to=2-2, start anchor=center, end anchor=center]
            \arrow[""{name=2, anchor=center, inner sep=0}, "{G_{2}}", from=1-3, to=2-4]
            \arrow[""{name=2p, anchor=center, inner sep=0}, phantom, from=1-3, to=2-4, start anchor=center, end anchor=center]
            \arrow["{K_{\ast}}"', from=2-2, to=2-4]
            \arrow["{\theta_{1}}"', shift left=2, shorten <=8pt, shorten >=8pt, Rightarrow, from=0p, to=1p]
            \arrow["\psi", shift right=2, shorten <=8pt, shorten >=8pt, Rightarrow, from=1p, to=2p]
        \end{tikzcd}
        \quad = \quad
        \begin{tikzcd}[column sep = small]
            & \BB && \DD \\
            {\SQ(\Set)} && \SPAN
            \arrow["J", from=1-2, to=1-4]
            \arrow[""{name=0, anchor=center, inner sep=0}, "{F_{1}}"', from=1-2, to=2-1]
            \arrow[""{name=0p, anchor=center, inner sep=0}, phantom, from=1-2, to=2-1, start anchor=center, end anchor=center]
            \arrow[""{name=1, anchor=center, inner sep=0}, "{F_{2}}"{description}, from=1-2, to=2-3]
            \arrow[""{name=1p, anchor=center, inner sep=0}, phantom, from=1-2, to=2-3, start anchor=center, end anchor=center]
            \arrow[""{name=1p, anchor=center, inner sep=0}, phantom, from=1-2, to=2-3, start anchor=center, end anchor=center]
            \arrow[""{name=2, anchor=center, inner sep=0}, "{G_{2}}", from=1-4, to=2-3]
            \arrow[""{name=2p, anchor=center, inner sep=0}, phantom, from=1-4, to=2-3, start anchor=center, end anchor=center]
            \arrow["{K_{\ast}}"', from=2-1, to=2-3]
            \arrow["\varphi", shift right=2, shorten <=8pt, shorten >=8pt, Rightarrow, from=0p, to=1p]
            \arrow["{\theta_{2}}"', shift left=2, shorten <=7pt, shorten >=7pt, Rightarrow, from=1p, to=2p]
        \end{tikzcd}
    \end{equation*}
\end{definition}

The notation for $\GlobCone(\Dbl, K_{\ast})$ is chosen to remind us that the objects are globular cones over $K_{\ast}$, and that there is a fibration $\GlobCone(\Dbl, K_{\ast}) \to \Dbl$ to the category of double categories and double functors. 

\begin{definition}
\label{definition:Idx-Dbl-SMULT}
    Let $\Idx(\Dbl, \SMULT)$ denote the category whose objects are pairs $(\BB, F)$ consisting of a double category $\B$ and a lax double functor $F \colon \BB \to \SMULT$, and whose morphisms $(J, \theta) \colon (\BB, F) \to (\DD, G)$ consist of a double functor $J \colon \BB \to \DD$ and a tight transformation $\theta \colon F \Rightarrow GJ$. 
\end{definition}

The notation $\Idx(\Dbl, \SMULT)$ is chosen to remind us that the objects are split multivalued functions ``indexed'' by a double category. 
There is also a canonical fibration $\Idx(\Dbl, \SMULT) \to \Dbl$. 

\begin{corollary}
\label{corollary:globular-cones-lax-double-functors}
    There is an isomorphism $\GlobCone(\Dbl, K_{\ast}) \cong \Idx(\Dbl, \SMULT)$.
\end{corollary}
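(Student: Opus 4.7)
The plan is to construct an explicit functor $\Phi \colon \Idx(\Dbl, \SMULT) \to \GlobCone(\Dbl, K_{\ast})$ by whiskering with the structural data of $\SMULT$, and then to appeal directly to the two universal properties proven above to show $\Phi$ is an isomorphism. Very little new work should be required; the corollary is essentially a repackaging of Theorem~\ref{theorem:1d-universal-property-SMULT} and Theorem~\ref{theorem:2d-universal-property-SMULT}.

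Concretely, I would define $\Phi$ on objects by sending $(\BB, F)$ to the globular cone $(\BB, U_{1}F, U_{2}F, \sigma \cdot F)$, where $\sigma \colon K_{\ast}U_{1} \Rightarrow U_{2}$ is the globular transformation of Lemma~\ref{lemma:globular-SMULT-SPAN-SQSET}. On a morphism $(J, \theta) \colon (\BB, F) \to (\DD, G)$, set $\Phi(J, \theta) = (J, U_{1} \cdot \theta, U_{2} \cdot \theta)$. The required coherence equation relating $\varphi$, $\psi$, $U_{1}\cdot\theta$ and $U_{2}\cdot\theta$ in Definition~\ref{definition:globular-cone-Kast} is automatic, since both sides reduce to the whiskered composite $\sigma \cdot GJ \circ K_{\ast}U_{1} \cdot \theta = U_{2} \cdot \theta \circ \sigma \cdot F$ by middle-four interchange. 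Functoriality of $\Phi$ is immediate from functoriality of whiskering.

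To show $\Phi$ is an isomorphism, I would verify bijectivity on objects and on hom-sets separately. For objects, Theorem~\ref{theorem:1d-universal-property-SMULT} asserts that for every globular cone $(\BB, F_{1}, F_{2}, \varphi)$ there exists a \emph{unique} lax double functor $F \colon \BB \to \SMULT$ with $U_{1}F = F_{1}$, $U_{2}F = F_{2}$ and $\sigma \cdot F = \varphi$; this is precisely the statement that $\Phi$ is bijective on objects. For hom-sets, Theorem~\ref{theorem:2d-universal-property-SMULT} asserts that every morphism $(J, \theta_{1}, \theta_{2}) \colon \Phi(\BB, F) \to \Phi(\DD, G)$ of globular cones arises from a \emph{unique} transformation $\theta \colon F \Rightarrow GJ$ with $U_{1}\cdot\theta = \theta_{1}$ and $U_{2}\cdot\theta = \theta_{2}$; this is exactly fullness and faithfulness of $\Phi$ on the hom from $(\BB, F)$ to $(\DD, G)$.

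There is no real obstacle here; the only routine check is that the inverse assignments on objects and morphisms respect identities and composition, which follows from the uniqueness clauses of the two universal property theorems (a composite in $\Idx(\Dbl, \SMULT)$ mapped into $\GlobCone(\Dbl, K_{\ast})$ satisfies the defining conditions for the composite in the target, hence must equal it). The substantive content has already been discharged by the preceding two theorems, so the proof reduces to this bookkeeping.
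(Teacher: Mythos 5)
Your proposal is correct and follows exactly the paper's (much terser) argument: the paper simply states that the isomorphism follows immediately from Theorems~\ref{theorem:1d-universal-property-SMULT} and~\ref{theorem:2d-universal-property-SMULT}, and your explicit construction of $\Phi$ by whiskering with $\sigma$, together with the appeal to the two uniqueness clauses for bijectivity on objects and hom-sets, is precisely the intended unpacking of that one-line proof.
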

\begin{proof}
    This follows immediately from the universal properties of $\SMULT$ exhibited in Theorem~\ref{theorem:1d-universal-property-SMULT} and Theorem~\ref{theorem:2d-universal-property-SMULT}.
\end{proof}

Recall that there is a fully faithful functor $\LO \colon \Cat \to \Dbl$ which constructs a strict double category $\LO(\B)$ whose objects and loose morphisms are the objects and morphisms of the category $\B$, and whose tight morphisms and cells are identities. 
We formally define the following useful terminology introduced in Subsection~\ref{subsec:overview}.

\begin{definition}
\label{definition:indexed-split-mutlivalued-function}
    A lax double functor $\LO(\B) \to \SMULT$ is called an \emph{indexed split multivalued function}. 
\end{definition}

\begin{remark}
\label{remark:restriction-to-Cat}
We may take the pullback of the fibration $\Idx(\Dbl, \SMULT) \to \Dbl$ along the functor $\LO \colon \Cat \to \Dbl$ to obtain the category of indexed split multivalued functions which we denote $\Idx(\Cat, \SMULT)$. 
Explicitly, the objects are pairs $(\B, F)$ of a category $\B$ and a lax double functor $F \colon \LO(\B) \to \SMULT$, while the morphisms $(k, \theta) \colon (\B, F) \to (\D, G)$ consist of a functor $k \colon \B \to \D$ and a tight transformation $\theta \colon F \Rightarrow G \circ \LO(k)$. 
The category $\GlobCone(\Cat, K_{\ast})$ may be defined by an analogous pullback. 
\end{remark}

The following corollary is an important component for proving our main theorem. 

\begin{corollary}
\label{corollary:Idx-GlobCone}
    There is an isomorphism $\GlobCone(\Cat, K_{\ast}) \cong \Idx(\Cat, \SMULT)$.
\end{corollary}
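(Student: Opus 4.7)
The plan is to deduce the corollary from Corollary~\ref{corollary:globular-cones-lax-double-functors} by restriction along the fully faithful embedding $\LO \colon \Cat \to \Dbl$. The isomorphism $\GlobCone(\Dbl, K_{\ast}) \cong \Idx(\Dbl, \SMULT)$ constructed via Theorem~\ref{theorem:1d-universal-property-SMULT} and Theorem~\ref{theorem:2d-universal-property-SMULT} is compatible with the projections to $\Dbl$ that send $(\BB, F_{1}, F_{2}, \varphi)$ or $(\BB, F)$ to $\BB$: indeed, on objects the passage between a globular cone and a lax double functor preserves the underlying double category, and on morphisms it preserves the underlying double functor $J$. Hence the isomorphism of Corollary~\ref{corollary:globular-cones-lax-double-functors} is in fact an isomorphism over $\Dbl$.

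The key step is then to observe that both $\GlobCone(\Cat, K_{\ast})$ and $\Idx(\Cat, \SMULT)$ are defined, by Remark~\ref{remark:restriction-to-Cat}, as pullbacks along the same functor $\LO \colon \Cat \to \Dbl$:
\begin{equation*}
    \begin{tikzcd}
        \Idx(\Cat, \SMULT) & \Idx(\Dbl, \SMULT) \\
        \Cat & \Dbl
        \arrow[from=1-1, to=1-2]
        \arrow[from=1-1, to=2-1]
        \arrow["\big\lrcorner"{anchor=center, pos=0.125}, draw=none, from=1-1, to=2-2]
        \arrow[from=1-2, to=2-2]
        \arrow["\LO"', from=2-1, to=2-2]
    \end{tikzcd}
    \qquad
    \begin{tikzcd}
        \GlobCone(\Cat, K_{\ast}) & \GlobCone(\Dbl, K_{\ast}) \\
        \Cat & \Dbl
        \arrow[from=1-1, to=1-2]
        \arrow[from=1-1, to=2-1]
        \arrow["\big\lrcorner"{anchor=center, pos=0.125}, draw=none, from=1-1, to=2-2]
        \arrow[from=1-2, to=2-2]
        \arrow["\LO"', from=2-1, to=2-2]
    \end{tikzcd}
\end{equation*}
Since pullbacks in $\Cat$ preserve isomorphisms of categories over a common base, and the isomorphism of Corollary~\ref{corollary:globular-cones-lax-double-functors} is compatible with the projections to $\Dbl$, it induces the desired isomorphism $\GlobCone(\Cat, K_{\ast}) \cong \Idx(\Cat, \SMULT)$.

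The only point that merits a brief verification, rather than being a real obstacle, is the compatibility of the isomorphism with the projections to $\Dbl$; this is immediate from the construction in the proofs of Theorem~\ref{theorem:1d-universal-property-SMULT} and Theorem~\ref{theorem:2d-universal-property-SMULT}, where the underlying double category (respectively, the underlying double functor $J$) is left unchanged by the passage between a globular cone and a lax double functor into $\SMULT$. One could alternatively unpack this explicitly by noting that a lax double functor $\BB \to \SMULT$ factoring through the projection to $\Cat$ corresponds precisely to a globular cone whose apex is of the form $\LO(\B)$, but the pullback argument is cleaner.
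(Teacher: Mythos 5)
Your proof is correct and follows essentially the same route the paper intends: the isomorphism of Corollary~\ref{corollary:globular-cones-lax-double-functors} lives over $\Dbl$ (both on objects and on morphisms it leaves the underlying double category, respectively double functor $J$, unchanged), and since Remark~\ref{remark:restriction-to-Cat} defines both $\Idx(\Cat, \SMULT)$ and $\GlobCone(\Cat, K_{\ast})$ as pullbacks of the $\Dbl$-level categories along $\LO \colon \Cat \to \Dbl$, the isomorphism restricts to the fibres over $\Cat$. The paper leaves this verification implicit, and your explicit check of compatibility with the projections is exactly the point that needs to be observed.
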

\section{The category of elements construction for delta lenses}
\label{sec:main-theorem}

In this section, we first establish an equivalence $\DiaLens \simeq \GlobCone(\Cat, K_{\ast})$ between the categories of diagrammatic delta lenses and globular cones over $K_{\ast}$. 
We then prove our main result: an equivalence $\Lens \simeq \Idx(\Cat, \SMULT)$ between the categories of delta lenses and indexed split multivalued functions.

\subsection{A double-categorical approach to the category of elements}
\label{sec:double-cat-of-elements}
We begin by extending the category $\Idx(\Cat, \SMULT)$ to one whose objects are lax double functors valued in an arbitrary double category. 
We then use this setting to provide a double-categorical perspective on the indexed presentations of discrete opfibrations and functors. 

\begin{definition}
\label{definition:indexed-categories}
    Let $\Idx(\Cat, \XX)$ be the category whose objects are pairs $(\B, F)$ of a (small) category $\B$ and a lax double functor $F \colon \LO(\B) \to \XX$, and whose morphisms $(k, \theta) \colon (\B, F) \to (\D, G)$ consist of a functor $k \colon \B \to \D$ and a tight transformation of double functors as depicted below. 
    \begin{equation*}
        \begin{tikzcd}[column sep=small]
            {\LO(\B)} && {\LO(\D)} \\
            & \XX
            \arrow["{\LO(k)}", from=1-1, to=1-3]
            \arrow[""{name=0, anchor=center, inner sep=0}, "F"', from=1-1, to=2-2]
            \arrow[""{name=1, anchor=center, inner sep=0}, "G", from=1-3, to=2-2]
            \arrow["\theta", shift left=2, shorten <=8pt, shorten >=8pt, Rightarrow, from=0, to=1]
        \end{tikzcd}
    \end{equation*}
\end{definition}

Using this definition, we can restate the classical category of elements construction for $\Set$-valued functors in terms of double categories. 
Recall that $\DOpf$ is the full subcategory of $\Cat^{\two}$ whose objects are discrete opfibrations. 

\begin{proposition}
\label{proposition:category-of-elements-SQSET}
    There is an equivalence of categories $\DOpf \simeq \Idx(\Cat, \SQ(\Set))$.
\end{proposition}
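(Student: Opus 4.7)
The plan is to reduce this statement to the classical category of elements equivalence between $\Set$-valued functors and discrete opfibrations, by first observing that a lax double functor $\LO(\B) \to \SQ(\Set)$ is nothing other than an ordinary functor $\B \to \Set$, and that morphisms in $\Idx(\Cat, \SQ(\Set))$ correspond exactly to pairs $(k, \theta)$ of a functor and a natural transformation.

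For the object part, I would exploit two features of the data. First, $\SQ(\Set)$ is a thin double category: between any prescribed boundary there is at most one cell, namely when the evident square of functions commutes. Second, the tight morphisms and cells of $\LO(\B)$ are all identities. Consequently, a lax double functor $F \colon \LO(\B) \to \SQ(\Set)$ is determined by a set $F(x)$ for each object $x \in \B$ and a function $F(u) \colon F(x) \to F(y)$ for each morphism $u \colon x \to y$ in $\B$, while the lax unit and composition comparison data are globular cells in $\SQ(\Set)$. Since globular cells in $\SQ(\Set)$ are merely equalities of parallel functions, these comparison cells force the strict functoriality equations $F(\id_x) = \id_{F(x)}$ and $F(v \circ u) = F(v) \circ F(u)$. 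Hence such an $F$ is precisely a functor $\B \to \Set$, and conversely every $\Set$-valued functor yields such a lax double functor.

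For the morphism part, I would unpack a tight transformation $\theta \colon F \Rightarrow G \circ \LO(k)$ in the same spirit. Its tight components are functions $\theta_x \colon F(x) \to G(kx)$, and its components at loose morphisms $u \colon x \to y$ are cells in $\SQ(\Set)$ whose existence amounts to the naturality equation $G(ku) \circ \theta_x = \theta_y \circ F(u)$; coherence with the identity comparison cells of $F$ and $G$, and naturality with respect to the identity cells of $\LO(\B)$, are automatic. Thus morphisms in $\Idx(\Cat, \SQ(\Set))$ are exactly pairs $(k, \theta)$ where $k \colon \B \to \D$ is a functor and $\theta \colon F \Rightarrow G \circ k$ is a natural transformation.

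With these two identifications in hand, the desired equivalence is the classical Grothendieck construction: define $\el \colon \Idx(\Cat, \SQ(\Set)) \to \DOpf$ to send $(\B, F)$ to the discrete opfibration whose total category has objects $(x, a \in F(x))$ and morphisms $(x, a) \to (y, b)$ given by arrows $u \colon x \to y$ with $F(u)(a) = b$, and to send $(k, \theta)$ to the commutative square over $k$ whose top functor is $(x, a) \mapsto (kx, \theta_x(a))$, which is well defined by naturality. For the inverse, a discrete opfibration $f \colon \A \to \B$ yields the fibrewise functor $x \mapsto f^{-1}(x)$ with unique-lift transport, and an arbitrary commuting square of discrete opfibrations decomposes uniquely as a base functor plus a natural transformation by the unique lifting property. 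I do not expect any substantial obstacle; the main subtlety is the morphism-level verification that every square of functors between discrete opfibrations arises in this way, and this is precisely where discreteness, rather than mere opcartesianness, is essential.
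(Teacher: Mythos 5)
Your proposal is correct and follows the same route as the paper: identify lax double functors $\LO(\B) \to \SQ(\Set)$ with ordinary functors $\B \to \Set$ (using thinness of $\SQ(\Set)$ to force strictness), identify the morphisms with pairs of a functor and a natural transformation, and then invoke the classical category of elements construction. You simply spell out the details that the paper leaves implicit.
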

\begin{proof}
    Given a category $\B$, a lax functor $\LO(\B) \to \SQ(\Set)$ is necessarily strict, and moreover equivalent to a functor $\B \to \Set$. 
    A morphism in $\Idx(\Cat, \SQ(\Set))$ is equivalent to a functor $\B \to \D$ and a natural transformation as shown below. 
    \begin{equation}
    \label{equation:category-of-elements-SQSET}
    \begin{tikzcd}[column sep=small]
        \B && \D \\
        & \Set
        \arrow[from=1-1, to=1-3]
        \arrow[""{name=0, anchor=center, inner sep=0}, from=1-1, to=2-2]
        \arrow[""{name=1, anchor=center, inner sep=0}, from=1-3, to=2-2]
        \arrow[shift left=2, shorten <=6pt, shorten >=6pt, Rightarrow, from=0, to=1]
    \end{tikzcd}
    \end{equation}
    By the classical category of elements construction, functors $\B \to \Set$ are the same as discrete opfibrations into $\B$, and natural transformations \eqref{equation:category-of-elements-SQSET} are the same as morphisms in $\DOpf$. 
\end{proof}

A generalisation of the classical category of elements construction provides an equivalence between lax double functors $\LO(\B) \to \SPAN$ and ordinary functors into~$\B$, as shown by Paré \cite[Example~3.13]{Pare2011} and Pavlović-Abramsky \cite[Proposition~4]{PavlovicAbramsky1997}.
We restate this result without the requirement of a fixed base category. 

\begin{proposition}
\label{proposition:category-of-elements-SPAN}
    There is an equivalence of categories $\Cat^{\two} \simeq \Idx(\Cat, \SPAN)$. 
\end{proposition}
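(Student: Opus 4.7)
The plan is to construct a quasi-inverse pair of functors between $\Cat^{\two}$ and $\Idx(\Cat, \SPAN)$, extending the classical category of elements construction which underlies the strict version $\Idx(\Cat, \SQ(\Set)) \simeq \DOpf$ of Proposition~\ref{proposition:category-of-elements-SQSET}.

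In the forward direction, I would define a functor $\Phi \colon \Cat^{\two} \to \Idx(\Cat, \SPAN)$ sending a functor $f \colon \A \to \B$ to the lax double functor $F \colon \LO(\B) \to \SPAN$ where $F(x) = \{a \in \A : fa = x\}$ is the object-fibre over $x$, and $F(u) = \{\alpha \colon a \to a' \in \A : f\alpha = u\}$ is the morphism-fibre over $u \colon x \to y$, with span legs given by source and target in $\A$. The unit comparison $F(x) \to F(\id_x)$ sends $a$ to $\id_a$, and the composition comparison $F(u) \times_{F(y)} F(v) \to F(v \circ u)$ is provided by composition in $\A$; the lax coherence axioms then reduce to the unit and associativity axioms of $\A$. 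A commutative square \eqref{equation:Lens-category} in $\Cat$ yields a tight transformation whose components are the restrictions of the functor $h$ to the appropriate fibres, well-defined because $kf = gh$.

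In the reverse direction, I would define a functor $\Psi \colon \Idx(\Cat, \SPAN) \to \Cat^{\two}$ sending $(\B, F)$ to the projection functor $\pi_{F} \colon \el(\B, F) \to \B$, where $\el(\B, F)$ has objects $(x, a)$ with $a \in F(x)$, and morphisms $(u, \alpha) \colon (x, a) \to (y, b)$ given by $u \colon x \to y$ in $\B$ and $\alpha \in F(u)$ with $s_{u}(\alpha) = a$ and $t_{u}(\alpha) = b$; identities and composition are determined by the unit and composition comparison cells of $F$, and the lax axioms of $F$ translate directly into the unit and associativity axioms of $\el(\B, F)$. A morphism $(k, \theta)$ of indexed data induces a functor $\el(\B, F) \to \el(\D, G)$ defined by $(x, a) \mapsto (kx, \theta_{x}a)$ on objects and $(u, \alpha) \mapsto (ku, \theta_{u}\alpha)$ on morphisms, with functoriality following from the coherence of $\theta$ with the unit and composition cells, and commutativity over $k$ immediate by construction.

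Finally, I would verify that $\Psi \circ \Phi \cong 1$ via the canonical bijection between $\A$ and $\el(\B, \Phi f)$ that identifies $a \in \A$ with $(fa, a)$ and $\alpha \colon a \to a'$ with $(f\alpha, \alpha)$; the projection $\pi_{\Phi f}$ recovers $f$ along this bijection. Conversely, $\Phi \circ \Psi \cong 1$ because the fibres of $\pi_{F}$ over $x$ and over $u$ are canonically $F(x)$ and $F(u)$, with span, unit, and composition structure preserved by construction. The main technical obstacle is the bookkeeping in the dictionary between the lax structure of a double functor $\LO(\B) \to \SPAN$ and the composition structure of $\el(\B, F)$: once one verifies that a lax double functor amounts to a category together with a functor to $\B$ (so that the pullback-compatible composition cells $F(u) \times_{F(y)} F(v) \to F(v \circ u)$ are precisely a partial composition operation lying over $\B$), the equivalence and its naturality follow routinely.
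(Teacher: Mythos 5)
Your proposal is correct and is exactly the standard category-of-elements argument that the paper relies on here: the paper states this proposition without proof, citing Par\'e and Pavlovi\'c--Abramsky, and the construction you give (fibre sets and spans with unit/composition comparisons from the identities and composition of $\A$ in one direction, $\el(\B,F)$ with projection in the other, inverse up to the canonical isomorphism $\A \cong \el(\B,\Phi f)$) is precisely the one sketched in Subsection~\ref{sec:proof-sketch} and carried out in those references. No gaps; the only cosmetic slip is citing \eqref{equation:Lens-category} (a square of delta lenses) where you mean an ordinary commutative square of functors, i.e.\ a morphism of $\Cat^{\two}$.
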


Given a lax double functor $F \colon \XX \to \YY$, there is an induced functor
\[
    F \colon \Idx(\Cat, \XX) \to \Idx(\Cat, \YY)
\]
given by post-composition (also denoted $F$ by abuse of notation).
If we consider the strict double functor $K_{\ast} \colon \SQ(\Set) \to \SPAN$, defined in \eqref{equation:SQSET-to-SPAN}, this induces a fully faithful functor $K_{\ast} \colon \Idx(\Cat, \SQ(\Set)) \to \Idx(\Cat, \SPAN)$ which ``commutes'' with the category of elements construction in Proposition~\ref{proposition:category-of-elements-SQSET} and Proposition~\ref{proposition:category-of-elements-SPAN}.

To be precise, let $\el \colon \Idx(\Cat, \SQ(\Set)) \to \DOpf$ and $\el \colon \Idx(\Cat, \SPAN) \to \Cat^{\two}$ denote the previous equivalences.
Given a morphism $(k, \theta) \colon (\B, F) \to (\D, G)$ in $\Idx(\Cat, \SPAN)$, its image of under functor $\el$ is the following morphism in $\Cat^{\two}$.
\begin{equation*}
    \begin{tikzcd}
        {\el(\B, F)} & {\el(\D, G)} \\
        \B & \D
        \arrow["{\El(k, \theta)}", from=1-1, to=1-2]
        \arrow["{\pi_{(\B, F)}}"', from=1-1, to=2-1]
        \arrow["{\pi_{(\D, G)}}", from=1-2, to=2-2]
        \arrow["k"', from=2-1, to=2-2]
    \end{tikzcd}
\end{equation*}
The following result tells us that discrete opfibrations correspond precisely to those lax double functors $\LO(\B) \to \SMULT$ which factor through $K_{\ast} \colon \SQ(\Set) \to \SPAN$. 

\begin{lemma}
\label{lemma:category-of-elements-commute}
    The following diagram commutes up to natural isomorphism.
    \begin{equation*}
        \begin{tikzcd}
            {\Idx(\Cat, \SQ(\Set))} & {\Idx(\Cat, \SPAN)} \\
            \DOpf & {\Cat^{\two}}
            \arrow[hook, "{K_{\ast}}", from=1-1, to=1-2]
            \arrow["\el"', "\simeq", from=1-1, to=2-1]
            \arrow["\el", "\simeq"', from=1-2, to=2-2]
            \arrow[hook, from=2-1, to=2-2]
        \end{tikzcd}
    \end{equation*}
\end{lemma}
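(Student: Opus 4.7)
The plan is to trace both composites on a generic object and morphism of $\Idx(\Cat, \SQ(\Set))$, and to show that under canonical choices of the equivalences $\el$ the two agree on the nose, so that the required natural isomorphism may be taken to be the identity. I would fix both instances of $\el$ to be the classical Grothendieck construction: for a functor $F \colon \B \to \Set$, the category $\el(\B, F)$ has objects $(x, a)$ with $a \in F(x)$ and morphisms $u \colon (x, a) \to (y, b)$ given by those $u \colon x \to y$ in $\B$ with $F(u)(a) = b$; for a lax double functor $G \colon \LO(\B) \to \SPAN$, the category $\el(\B, G)$ is taken as described in Subsection~\ref{sec:proof-sketch}.

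Next I would unpack the clockwise composite on $(\B, F) \in \Idx(\Cat, \SQ(\Set))$. Since $\SQ(\Set)$ is thin, $F$ is automatically strict and equivalent to an ordinary functor $\B \to \Set$. Applying $K_{\ast}$ sends each function $F(u) \colon F(x) \to F(y)$ to its companion span $F(x) \xleftarrow{1} F(x) \xrightarrow{F(u)} F(y)$. A short check shows that the span composite of $(K_{\ast}F)(u)$ and $(K_{\ast}F)(v)$ is again the companion span of $F(v \circ u)$, so the composition laxator is an identity; similarly the unit laxator is an identity. Consequently, a morphism $(u, \alpha) \colon (x, a) \to (y, b)$ in $\el(\B, K_{\ast}F)$ consists of $u \colon x \to y$ and $\alpha \in F(x)$ with $\alpha = a$ (from the identity source leg) and $F(u)(a) = b$ (from the target leg) --- i.e.\ exactly a morphism of $\el(\B, F)$. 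Identities and composites therefore match literally, and the resulting discrete opfibrations into $\B$ coincide.

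For the morphism-level verification, given $(k, \theta) \colon (\B, F) \to (\D, G)$ in $\Idx(\Cat, \SQ(\Set))$, the tight transformation $K_{\ast}\theta$ has loose components obtained by whiskering the naturality squares of $\theta$ with $K_{\ast}$, so the induced functor on categories of elements acts by $(x, a) \mapsto (kx, \theta_x a)$ and $u \mapsto ku$; this is precisely the morphism in $\Cat^{\two}$ arising directly from applying $\el$ to $(k, \theta)$. I do not anticipate any serious obstacle: the verification is a matter of careful bookkeeping, the main subtlety being to invoke the convention of Definition~\ref{definition:double-category-SMult}(i) which keeps pullbacks of identities strict, so that all laxators in play are genuinely identities rather than merely canonically isomorphic, and hence the two composites agree strictly.
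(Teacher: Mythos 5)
Your proposal is correct. The paper in fact states this lemma without proof, and your direct verification---fixing canonical representatives of both equivalences, observing that $K_{\ast}F$ is strict with the companion span of $F(v\circ u)$ as its composite (using the convention that pullbacks along identities are identities), and matching objects, morphisms, and the action on morphisms of $\Idx(\Cat,\SQ(\Set))$ with the classical category of elements---is exactly the expected argument and establishes strict commutativity, which is stronger than the stated ``up to natural isomorphism.'' The only slip is cosmetic: the pullback convention you cite is item (j), not (i), of Definition~\ref{definition:double-category-SMult}, and it is stated there for $\SMULT$ rather than $\SPAN$, though the same convention is plainly in force for $\SPAN$.
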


We now address the relationship between identity-on-objects functors and globular transformations (see Definition~\ref{definition:globular-transformation}) under the category of elements construction. 

\begin{lemma}
\label{lemma:globular-to-ioo}
    If $(1_{\B}, \theta) \colon (\B, F) \to (\B, G)$ is a morphism in $\Idx(\Cat, \SPAN)$ such that $\theta$ is a globular transformation, then $\El(1_{\B}, \theta) \colon \El(\B, F) \to \El(\B, G)$ is an identity-on-objects functor. 

    Conversely, if $(h, 1_{\B})$ is a morphism in $\Cat^{\two}$ such that $h$ is identity-on-objects, then its image under the equivalence $\Cat^{\two} \simeq \Idx(\Cat, \SPAN)$ is a globular transformation.
\end{lemma}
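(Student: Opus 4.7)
The plan is to unpack the equivalence $\Cat^{\two} \simeq \Idx(\Cat, \SPAN)$ from Proposition~\ref{proposition:category-of-elements-SPAN} at the level of morphisms and compare the two conditions directly. Recall that under this equivalence a lax double functor $F \colon \LO(\B) \to \SPAN$ corresponds to the projection functor $\pi_{(\B, F)} \colon \el(\B, F) \to \B$ whose fibre over an object $x \in \B$ is the set $Fx$; dually, starting from a functor $f \colon \A \to \B$, the associated lax double functor assigns to each object $x$ the fibre $Fx = \{a \in \A \mid fa = x\}$. A morphism $(k, \theta) \colon (\B, F) \to (\D, G)$ corresponds to a commutative square in $\Cat$ whose top edge sends $(x, a) \mapsto (kx, \theta_{x}(a))$.

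For the first claim, suppose $(1_{\B}, \theta) \colon (\B, F) \to (\B, G)$ is a morphism in $\Idx(\Cat, \SPAN)$ with $\theta$ globular. By Definition~\ref{definition:globular-transformation}, each component $\theta_{x} \colon Fx \to Gx$ is an identity function; in particular $Fx = Gx$ as sets. Under the equivalence, the induced functor $\El(1_{\B}, \theta) \colon \el(\B, F) \to \el(\B, G)$ therefore acts on objects by $(x, a) \mapsto (x, \theta_{x}(a)) = (x, a)$, and is identity-on-objects as required.

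For the converse, suppose $(h, 1_{\B}) \colon f \to g$ is a morphism in $\Cat^{\two}$ with $h \colon \A \to \A'$ identity-on-objects, so $\obj(\A) = \obj(\A')$ and $gh = f$. Under the equivalence, $f$ and $g$ correspond to lax double functors $F, G \colon \LO(\B) \to \SPAN$ whose object assignments are the fibre sets $Fx = f^{-1}(x)$ and $Gx = g^{-1}(x)$, and the component $\theta_{x} \colon Fx \to Gx$ of the associated transformation is the restriction of $h$ to these fibres. Since $h$ acts as the identity on objects and $gh = f$ forces $f^{-1}(x) = g^{-1}(x)$ as subsets of the shared object set, each $\theta_{x}$ is an identity function, proving that $\theta$ is globular.

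The content of the proof is almost entirely bookkeeping around the category of elements construction, and I do not expect a genuine obstacle. The only mild subtlety is confirming that the fibre sets coincide \emph{on the nose} rather than merely up to bijection, which follows at once from the identity-on-objects condition together with $gh = f$; without this step the conclusion would only be that $\theta_{x}$ is invertible rather than literally an identity.
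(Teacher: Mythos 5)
Your proof is correct and takes essentially the same approach as the paper: the forward direction is verbatim the paper's argument (globular components are identities, so the object assignment $(x,a)\mapsto(x,\theta_x(a))$ is the identity), and your converse, which the paper omits as straightforward, correctly observes that identity-on-objects plus $gh=f$ forces the fibres to coincide on the nose so that each $\theta_x$ is literally an identity.
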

\begin{proof}
    The component of globular transformation $\theta$ at an object $b$ in $\B$ is the identity function $F(b) = G(b)$ in $\Set$. 
    By construction, an object in $\el(\B, F)$ is a pair $(b \in \B, x \in F(b))$, and this is sent by the functor $\el(1_{\B}, \theta)$ to the object $(b \in \B, \theta_{b}(x) \in G(b))$ in $\el(\B, G)$. 
    However, since $\theta$ is globular, $\theta_{b}(x) = x$ and the functor $\el(\B, \theta)$ is identity-on-objects. 
    The proof of the converse is also straightforward, and is omitted.
\end{proof}

\begin{theorem}
\label{theorem:DiaLens-GlobCone}
    There is an equivalence $\DiaLens \simeq \GlobCone(\Cat, K_{\ast})$. 
\end{theorem}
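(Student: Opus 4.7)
The plan is to assemble the desired equivalence from the two category-of-elements equivalences $\el \colon \Idx(\Cat, \SQ(\Set)) \simeq \DOpf$ and $\el \colon \Idx(\Cat, \SPAN) \simeq \Cat^{\two}$ established in Propositions~\ref{proposition:category-of-elements-SQSET} and~\ref{proposition:category-of-elements-SPAN}, together with the compatibility of these equivalences recorded in Lemma~\ref{lemma:category-of-elements-commute} and the translation between globular transformations and identity-on-objects functors provided by Lemma~\ref{lemma:globular-to-ioo}.

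First I would define a functor $\el \colon \GlobCone(\Cat, K_{\ast}) \to \DiaLens$ on objects. Given a globular cone $(\B, F_{1}, F_{2}, \varphi)$, the two equivalences produce a discrete opfibration $g \colon \el(\B, F_{1}) \to \B$ and a functor $f \colon \el(\B, F_{2}) \to \B$. By Lemma~\ref{lemma:category-of-elements-commute}, the composite $K_{\ast} F_{1}$ also corresponds under $\el$ to $g$, now viewed as an object of $\Cat^{\two}$. The globular transformation $\varphi \colon K_{\ast} F_{1} \Rightarrow F_{2}$ is therefore a morphism in $\Idx(\Cat, \SPAN)$ whose image under $\el$ is a morphism $(p, 1_{\B}) \colon g \to f$ of $\Cat^{\two}$, where $p \colon \el(\B, F_{1}) \to \el(\B, F_{2})$ is identity-on-objects by Lemma~\ref{lemma:globular-to-ioo}. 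Since $fp = g$ is a discrete opfibration by construction, the pair $(f, p)$ is a diagrammatic delta lens.

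Next I would extend the assignment to morphisms. A morphism $(k, \theta_{1}, \theta_{2}) \colon (\B, F_{1}, F_{2}, \varphi) \to (\D, G_{1}, G_{2}, \psi)$ of globular cones yields, via the two category-of-elements equivalences, a morphism $(\ell, k)$ in $\DOpf$ and a morphism $(h, k)$ in $\Cat^{\two}$ sharing the same base functor $k$. The coherence equation of Definition~\ref{definition:globular-cone-Kast}, after applying $\el$ and invoking Lemma~\ref{lemma:category-of-elements-commute}, becomes the commutative triangle $h \circ p = q \circ \ell$ over $\B \to \D$, which is exactly the top square of diagram~\eqref{equation:DiaLens}. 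This yields a morphism $(\ell, h, k)$ in $\DiaLens$ and functoriality is inherited from that of the underlying equivalences.

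Finally, the inverse functor is obtained by reversing each step: a diagrammatic delta lens $(f, p)$ with $f \colon \A \to \B$ and $p \colon \X \to \A$ yields $F_{2} \coloneqq \el^{-1}(f)$ via Proposition~\ref{proposition:category-of-elements-SPAN}, $F_{1} \coloneqq \el^{-1}(fp)$ via Proposition~\ref{proposition:category-of-elements-SQSET} (using that $fp$ is a discrete opfibration), and the identity-on-objects functor $p$ translates under the converse of Lemma~\ref{lemma:globular-to-ioo} to a globular transformation $\varphi \colon K_{\ast} F_{1} \Rightarrow F_{2}$. That the two constructions are mutually inverse up to natural isomorphism is immediate from the equivalences used to build them. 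The main obstacle is bookkeeping: one must carefully unpack the globular cone coherence equation, which involves whiskerings of transformations with the double functor $K_{\ast}$, and verify that it corresponds under $\el$ precisely to commutativity of the upper square of~\eqref{equation:DiaLens} rather than to some weaker constraint.
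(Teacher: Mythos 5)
Your proposal is correct and follows essentially the same route as the paper: both arguments assemble the equivalence from Propositions~\ref{proposition:category-of-elements-SQSET} and~\ref{proposition:category-of-elements-SPAN}, Lemma~\ref{lemma:category-of-elements-commute}, and Lemma~\ref{lemma:globular-to-ioo}, differing only in that you primarily construct the functor $\GlobCone(\Cat, K_{\ast}) \to \DiaLens$ while the paper describes the correspondence starting from $\DiaLens$. Your closing remark correctly identifies the one point the paper also leaves to diagram-chasing, namely that the globular-cone coherence equation translates exactly to commutativity of the upper square of~\eqref{equation:DiaLens}.
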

\begin{proof}
Recall from Definition~\ref{definition:DiaLens} that a diagrammatic delta lens $(f, p)$ is a composable pair of functors $f \colon \A \to \B$ and $p \colon \X \to \A$ such that $p$ is identity-on-objects and $fp$ is a discrete opfibration. 
By Lemma~\ref{lemma:globular-to-ioo} and Proposition~\ref{proposition:category-of-elements-SPAN}, a diagrammatic delta lens corresponds to a globular transformation $F_{1} \Rightarrow F_{2}$ in $\Idx(\Cat, \SPAN)$. 
By Lemma~\ref{lemma:category-of-elements-commute} and Proposition~\ref{proposition:category-of-elements-SQSET}, the double functor $F_{1} \colon \LO(\B) \to \SMULT$ factors through $K_{\ast} \colon \SQ(\Set) \to \SPAN$. 
Therefore, each diagrammatic delta lens corresponds to a globular transformation $K_{\ast} \circ F_{1}' \Rightarrow F_{2}$ in $\Idx(\Cat, \SPAN)$ which is precisely an object in $\GlobCone(\Cat, K_{\ast})$ by Definition~\ref{definition:globular-cone-Kast} and Remark~\ref{remark:restriction-to-Cat}.

A morphism in $\DiaLens$ corresponds to a morphism in $\Cat^{\two}$ and a morphism in $\DOpf$ which are suitably compatible. 
Similarly, a morphism in $\GlobCone(\Cat, K_{\ast})$ corresponds to a morphism in $\Idx(\Cat, \SPAN)$ and a morphism in $\Idx(\Cat, \SQ(\Set))$ which are suitably compatible. 
By Proposition~\ref{proposition:category-of-elements-SQSET} and Proposition~\ref{proposition:category-of-elements-SPAN}, we have equivalences $\DOpf \simeq \Idx(\Cat, \SQ(\Set))$ and $\Cat^{\two} \simeq \Idx(\Cat, \SPAN)$, respectively, and it straightforward to show that these extend to the desired equivalence $\DiaLens \simeq \GlobCone(\Cat, K_{\ast})$ by diagram-chasing and Lemma~\ref{lemma:category-of-elements-commute}. 
\end{proof}

\subsection{Main theorem: the Grothendieck construction for delta lenses}

In Definition~\ref{definition:Lens-category}, we defined the category $\Lens$ whose objects are delta lenses (see Definition~\ref{definition:delta-lens}). 
In Remark~\ref{remark:restriction-to-Cat}, we defined the category $\Idx(\Cat, \SMULT)$ whose objects are lax double functors $\LO(\B) \to \SMULT$, also called indexed split multivalued functions (see Definition~\ref{definition:indexed-split-mutlivalued-function}). 
We now show that these categories are equivalent. 

\begin{theorem}
\label{theorem:main}
    There is an equivalence of categories 
    \[
        \el \colon \Idx(\Cat, \SMULT) \simeq \Lens 
    \]
    between the categories of indexed split multivalued functions and delta lenses. 
\end{theorem}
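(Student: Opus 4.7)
The plan is to obtain the equivalence as the composite of the three results established earlier in the paper. Specifically, Theorem~\ref{theorem:Lens-DiaLens} gives $\Lens \simeq \DiaLens$, Theorem~\ref{theorem:DiaLens-GlobCone} gives $\DiaLens \simeq \GlobCone(\Cat, K_{\ast})$, and Corollary~\ref{corollary:Idx-GlobCone} gives the isomorphism $\GlobCone(\Cat, K_{\ast}) \cong \Idx(\Cat, \SMULT)$. Stringing these together yields
\[
    \Lens \simeq \DiaLens \simeq \GlobCone(\Cat, K_{\ast}) \cong \Idx(\Cat, \SMULT),
\]
and the functor $\el$ will be defined as a (quasi-)inverse to this composite equivalence.

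To pin down what $\el$ does in practice, I would trace an object $(\B, F) \in \Idx(\Cat, \SMULT)$ through the chain. By Corollary~\ref{corollary:Idx-GlobCone}, the lax double functor $F$ decomposes as the pair $(U_{1}F, U_{2}F)$ together with the globular transformation $\sigma \cdot F \colon K_{\ast} U_{1} F \Rightarrow U_{2}F$. Proposition~\ref{proposition:category-of-elements-SQSET} turns $U_{1}F$ into a discrete opfibration and Proposition~\ref{proposition:category-of-elements-SPAN} turns $U_{2}F$ into an ordinary functor, while Lemma~\ref{lemma:globular-to-ioo} identifies the globular component with an identity-on-objects factorisation. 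The outcome is a diagrammatic delta lens $(f, p)$, which by Lemma~\ref{lemma:DiaLens-to-Lens} corresponds to the delta lens whose objects are pairs $(x \in \B, a \in F(x))$ and whose chosen lift of $u \colon x \to y$ at $(x, a)$ is $(u, \sigma_{u}(a))$, matching the sketch of Subsection~\ref{sec:proof-sketch}.

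Since the essential work has been carried out in the previous results, there is no genuine obstacle: the theorem is an assembly statement. The only care required is routine diagram-chasing to confirm that the composite of the three equivalences agrees with the explicit category of elements construction for indexed split multivalued functions, which amounts to tracking objects and morphisms along the chain and is straightforward given how the intermediate categories were designed.
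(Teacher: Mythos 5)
Your proposal is correct and follows exactly the paper's own argument: the theorem is proved by composing the three equivalences $\Lens \simeq \DiaLens \simeq \GlobCone(\Cat, K_{\ast}) \cong \Idx(\Cat, \SMULT)$ from Theorem~\ref{theorem:Lens-DiaLens}, Theorem~\ref{theorem:DiaLens-GlobCone}, and Corollary~\ref{corollary:Idx-GlobCone}. Your additional tracing of an object through the chain to recover the explicit category-of-elements description is a helpful elaboration consistent with the sketch in Subsection~\ref{sec:proof-sketch}.
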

\begin{proof}
We have the following sequence of equivalences
\[
    \Idx(\Cat, \SMULT) \cong \GlobCone(\Cat, K_{\ast}) \simeq \DiaLens \simeq \Lens
\]
by Theorem~\ref{theorem:Lens-DiaLens}, Theorem~\ref{theorem:DiaLens-GlobCone}, and Corollary~\ref{corollary:Idx-GlobCone}. 
\end{proof}

The functor $\el \colon \Idx(\Cat, \SMULT) \to \Lens$ is called the \emph{Grothendieck construction} for delta lenses, or alternatively, the \emph{category of elements} of an indexed split multivalued function. 
The action of this construction was sketched in Subsection~\ref{sec:proof-sketch}.
\section{Examples and applications}
\label{sec:examples}

\subsection{Adjunctions of double categories}
\label{sec:Adjunctions}

In Proposition~\ref{proposition:DOpf-Lens}, we showed that there is a coreflective adjunction
\begin{equation*}
    \begin{tikzcd}
        \DOpf & \Lens
        \arrow[""{name=0, anchor=center, inner sep=0}, "\Upsilon"', shift right=2, hook, from=1-1, to=1-2]
        \arrow[""{name=1, anchor=center, inner sep=0}, "\Lambda"', shift right=2, from=1-2, to=1-1]
        \arrow["\dashv"{anchor=center, rotate=90}, draw=none, from=0, to=1]
    \end{tikzcd}
\end{equation*}
between the category of discrete opfibrations and the category of delta lenses. 
We may demonstrate that this is induced by an adjunction of \emph{double categories}. 

Recall that the double category $\SMULT$ admits a $1$-dimensional and $2$-dimensional universal property as the universal globular cone over $K_{\ast}$. 
\begin{equation*}
    \begin{tikzcd}[column sep = small]
        & \SMULT \\
        {\SQ(\Set)} && \SPAN
        \arrow[""{name=0, anchor=center, inner sep=0}, "{U_{1}}"', from=1-2, to=2-1]
        \arrow[""{name=1, anchor=center, inner sep=0}, "{U_{2}}", from=1-2, to=2-3]
        \arrow["{K_{\ast}}"', from=2-1, to=2-3]
        \arrow["\sigma", shift right=2, shorten <=10pt, shorten >=10pt, Rightarrow, from=0, to=1]
    \end{tikzcd}
\end{equation*}
Given the identity (globular) transformation on $K_{\ast}$, there is a unique (strict) double functor $\langle 1, K_{\ast} \rangle \colon \SQ(\Set) \to \SMULT$ such that $U_{1} \langle 1, K_{\ast} \rangle = 1_{\SQ(\Set)}$, $U_{2} \langle 1, K_{\ast} \rangle = K_{\ast}$ and $\sigma \cdot \langle 1, K_{\ast} \rangle = \id_{K_{\ast}}$ by Theorem~\ref{theorem:1d-universal-property-SMULT}. 
Given the morphism 
\begin{equation*}
    \begin{tikzcd}[column sep = tiny]
        \SMULT && \SMULT \\
        & {\SQ(\Set)} && \SPAN
        \arrow["{1_{\SMULT}}", from=1-1, to=1-3]
        \arrow[""{name=0, anchor=center, inner sep=0}, "{U_{1}}"', from=1-1, to=2-2]
        \arrow[""{name=0p, anchor=center, inner sep=0}, phantom, from=1-1, to=2-2, start anchor=center, end anchor=center]
        \arrow[""{name=1, anchor=center, inner sep=0}, "{U_{1}}"{description}, from=1-3, to=2-2]
        \arrow[""{name=1p, anchor=center, inner sep=0}, phantom, from=1-3, to=2-2, start anchor=center, end anchor=center]
        \arrow[""{name=1p, anchor=center, inner sep=0}, phantom, from=1-3, to=2-2, start anchor=center, end anchor=center]
        \arrow[""{name=2, anchor=center, inner sep=0}, "{U_{2}}", from=1-3, to=2-4]
        \arrow[""{name=2p, anchor=center, inner sep=0}, phantom, from=1-3, to=2-4, start anchor=center, end anchor=center]
        \arrow["{{K_{\ast}}}"', from=2-2, to=2-4]
        \arrow["\id"', shift left=2, shorten <=10pt, shorten >=10pt, Rightarrow, from=0p, to=1p]
        \arrow[draw=none, from=1, to=2-2]
        \arrow["\sigma", shift right=2, shorten <=12pt, shorten >=12pt, Rightarrow, from=1p, to=2p]
    \end{tikzcd}
    \quad = \quad
    \begin{tikzcd}[column sep = tiny]
        & \SMULT && \SMULT \\
        {\SQ(\Set)} && \SPAN
        \arrow["{1_{\SMULT}}", from=1-2, to=1-4]
        \arrow[""{name=0, anchor=center, inner sep=0}, "{U_{1}}"', from=1-2, to=2-1]
        \arrow[""{name=0p, anchor=center, inner sep=0}, phantom, from=1-2, to=2-1, start anchor=center, end anchor=center]
        \arrow[""{name=1, anchor=center, inner sep=0}, "{K_{\ast}U_{1}}"{description}, from=1-2, to=2-3]
        \arrow[""{name=1p, anchor=center, inner sep=0}, phantom, from=1-2, to=2-3, start anchor=center, end anchor=center]
        \arrow[""{name=1p, anchor=center, inner sep=0}, phantom, from=1-2, to=2-3, start anchor=center, end anchor=center]
        \arrow[""{name=2, anchor=center, inner sep=0}, "{U_{2}}", from=1-4, to=2-3]
        \arrow[""{name=2p, anchor=center, inner sep=0}, phantom, from=1-4, to=2-3, start anchor=center, end anchor=center]
        \arrow["{{K_{\ast}}}"', from=2-1, to=2-3]
        \arrow["\id", shift right=2, shorten <=9pt, shorten >=9pt, Rightarrow, from=0p, to=1p]
        \arrow["\sigma"', shift left=2, shorten <=9pt, shorten >=9pt, Rightarrow, from=1p, to=2p]
    \end{tikzcd}
\end{equation*}
in $\GlobCone(\Cat, K_{\ast})$, there exists a unique tight transformation
\begin{equation}
\label{equation:counit-SQSET-SMULT}
    \begin{tikzcd}[column sep = tiny]
        \SMULT && \SMULT \\
        & \SMULT
        \arrow["{1_{\SMULT}}", from=1-1, to=1-3]
        \arrow[""{name=0, anchor=center, inner sep=0}, "{\langle 1, K_{\ast} \rangle U_{1}}"', from=1-1, to=2-2]
        \arrow[""{name=1, anchor=center, inner sep=0}, "{1_{\SMULT}}", from=1-3, to=2-2]
        \arrow["\varepsilon", shorten <=10pt, shorten >=10pt, Rightarrow, from=0, to=1]
    \end{tikzcd}
\end{equation}
such that $U_{1} \cdot \varepsilon = 1_{U_{1}}$ and $U_{2} \cdot \varepsilon = \sigma$ by Theorem~\ref{theorem:2d-universal-property-SMULT}. 
Applying the universal property again, we can show that $\varepsilon \cdot \langle 1, K_{\ast} \rangle = 1_{\langle 1, K_{\ast} \rangle}$, yielding the following result.

\begin{proposition}
\label{proposition:SQSET-SMULT-adjunction}
    There is a coreflective adjunction
    \begin{equation*}
        \begin{tikzcd}[column sep = large]
            {\SQ(\Set)} & \SMULT
            \arrow[""{name=0, anchor=center, inner sep=0}, "{\langle 1, K_{\ast} \rangle}"', shift right=2, hook, from=1-1, to=1-2]
            \arrow[""{name=1, anchor=center, inner sep=0}, "{U_{1}}"', shift right=2, from=1-2, to=1-1]
            \arrow["\dashv"{anchor=center, rotate=90}, draw=none, from=0, to=1]
        \end{tikzcd}
    \end{equation*}
    between the double category of commutative squares in $\Set$ and the double category of split multivalued functions, whose counit is \eqref{equation:counit-SQSET-SMULT}.
\end{proposition}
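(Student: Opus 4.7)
The plan is to take the unit $\eta$ of the adjunction to be the identity transformation, which is legitimate because $U_{1} \langle 1, K_{\ast} \rangle = 1_{\SQ(\Set)}$ holds strictly on the nose by the defining conditions of $\langle 1, K_{\ast} \rangle$ from Theorem~\ref{theorem:1d-universal-property-SMULT}. With unit an identity, $\langle 1, K_{\ast} \rangle$ is automatically a section of $U_{1}$, hence fully faithful, so the resulting adjunction will indeed be coreflective as claimed. It then remains to verify the two triangle identities.

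With $\eta$ taken to be the identity, the first triangle identity $(U_{1} \cdot \varepsilon) \circ (\eta \cdot U_{1}) = 1_{U_{1}}$ reduces immediately to the equation $U_{1} \cdot \varepsilon = 1_{U_{1}}$, which is one of the two defining conditions of $\varepsilon$ obtained from Theorem~\ref{theorem:2d-universal-property-SMULT}. The second triangle identity reduces to the equation $\varepsilon \cdot \langle 1, K_{\ast} \rangle = 1_{\langle 1, K_{\ast} \rangle}$, which is precisely the assertion that the passage before the proposition flags as the content requiring an additional application of the universal property.

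The strategy to establish this second equation is to invoke the uniqueness clause of Theorem~\ref{theorem:2d-universal-property-SMULT} applied to transformations $\langle 1, K_{\ast} \rangle \Rightarrow \langle 1, K_{\ast} \rangle$. Both $\varepsilon \cdot \langle 1, K_{\ast} \rangle$ and $1_{\langle 1, K_{\ast} \rangle}$ are such transformations. Whiskering with $U_{1}$ gives, for the first, $U_{1} \cdot (\varepsilon \cdot \langle 1, K_{\ast} \rangle) = (U_{1} \cdot \varepsilon) \cdot \langle 1, K_{\ast} \rangle = 1_{U_{1}} \cdot \langle 1, K_{\ast} \rangle = 1_{1_{\SQ(\Set)}}$, and this agrees with $U_{1} \cdot 1_{\langle 1, K_{\ast} \rangle}$. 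Whiskering with $U_{2}$ gives $U_{2} \cdot (\varepsilon \cdot \langle 1, K_{\ast} \rangle) = \sigma \cdot \langle 1, K_{\ast} \rangle = \id_{K_{\ast}}$, using $U_{2} \cdot \varepsilon = \sigma$ together with the defining condition $\sigma \cdot \langle 1, K_{\ast} \rangle = \id_{K_{\ast}}$ of $\langle 1, K_{\ast} \rangle$, and this again agrees with $U_{2} \cdot 1_{\langle 1, K_{\ast} \rangle}$. By uniqueness, the two transformations must coincide.

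I expect no serious obstacle. The main care required is in the bookkeeping: one must verify that the pair $\theta_{1} = 1$ and $\theta_{2} = 1$ together with the globular transformation $\id_{K_{\ast}}$ on both top and bottom satisfies the compatibility equation in Theorem~\ref{theorem:2d-universal-property-SMULT}, but this pasting equation collapses entirely on both sides to $\id_{K_{\ast}}$ and is therefore immediate.
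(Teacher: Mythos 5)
Your proposal is correct and follows essentially the same route as the paper: the unit is the identity, the first triangle identity is the defining condition $U_{1} \cdot \varepsilon = 1_{U_{1}}$, and the second is the equation $\varepsilon \cdot \langle 1, K_{\ast} \rangle = 1_{\langle 1, K_{\ast} \rangle}$, which the paper likewise obtains by ``applying the universal property again'' and which you correctly verify via the uniqueness clause of Theorem~\ref{theorem:2d-universal-property-SMULT} after whiskering with $U_{1}$ and $U_{2}$. One small slip: being a section of $U_{1}$ does not by itself imply full faithfulness --- the correct justification is that the unit is an isomorphism (here an identity), which is equivalent to the left adjoint being fully faithful, so the coreflectivity claim still stands.
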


The category $\Idx(\Cat, \XX)$ introduced in Definition~\ref{definition:indexed-categories} extends to a $2$-functor $\Idx(\Cat, -) \colon \Dbl_{\lax} \to \CAT$ from the $2$-category of double categories, lax functors, and tight transformations to the $2$-category of locally small categories, functors, and natural transformations
Since $\DOpf \simeq \Idx(\Cat, \SQ(\Set))$ by Proposition~\ref{proposition:category-of-elements-SQSET} and $\Lens \simeq \Idx(\Cat, \SMULT)$ by Theorem~\ref{theorem:main}, we recover the coreflective adjunction of Proposition~\ref{proposition:DOpf-Lens} as an immediate corollary of Proposition~\ref{proposition:SQSET-SMULT-adjunction}.

The counit $\varepsilon$ defined in \eqref{equation:counit-SQSET-SMULT} has a component at a split multivalued function $(s, X, t, \sigma) \colon A \lto B$ given by the following cell. 
\begin{equation}
\label{equation:counit-component}
    \begin{tikzcd}
        A & A & B \\
        A & X & B
        \arrow["{1_{A}}", shift left=2, tail, from=1-1, to=1-2]
        \arrow["{1_{A}}", from=1-1, to=2-1]
        \arrow["{1_{A}}", shift left=2, two heads, from=1-2, to=1-1]
        \arrow["{t\sigma}", from=1-2, to=1-3]
        \arrow["\sigma", from=1-2, to=2-2]
        \arrow["{1_{B}}", from=1-3, to=2-3]
        \arrow["\sigma", shift left=2, from=2-1, to=2-2]
        \arrow["s", shift left=2, two heads, from=2-2, to=2-1]
        \arrow["t"', from=2-2, to=2-3]
    \end{tikzcd}
\end{equation}

\begin{proposition}
\label{proposition:reflective-adjunction-SQSET-SMULT}
    There is a reflective adjunction 
    \[
        \begin{tikzcd}
            {\SQ(\Set)} & \SMULT
            \arrow[""{name=0, anchor=center, inner sep=0}, shift right=2, hook, from=1-1, to=1-2]
            \arrow[""{name=1, anchor=center, inner sep=0}, "{{U_{1}}}"', shift right=2, from=1-2, to=1-1]
            \arrow["\dashv"{anchor=center, rotate=-90}, draw=none, from=1, to=0]
        \end{tikzcd}
    \]
    whose right adjoint is the identity on objects and tight morphisms, and whose assignment on loose morphisms sends a function $A \to B$ to the split multivalued function $(\pi_{A}, A \times B, \pi_{B}, \langle 1_{A}, f \rangle) \colon A \lto B$. 
\end{proposition}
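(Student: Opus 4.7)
The plan is to define the right adjoint $R \colon \SQ(\Set) \to \SMULT$ directly and verify the adjunction $U_{1} \dashv R$ via the unit-counit formulation. Since $U_{1} R = 1_{\SQ(\Set)}$ will hold on the nose --- because $\pi_{B} \circ \langle 1_{A}, f \rangle = f$ --- the counit can be taken to be the identity transformation, and this immediately makes $R$ fully faithful and the adjunction reflective.

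First I extend the stated action of $R$ to cells: a commuting square in $\SQ(\Set)$ with horizontal sides $f \colon A \to B$, $g \colon C \to D$ and vertical sides $h \colon A \to C$, $k \colon B \to D$ is sent to the cell in $\SMULT$ with the same tight sides and middle function $h \times k \colon A \times B \to C \times D$; the cell axioms of $\SMULT$ then reduce immediately to $g \circ h = k \circ f$. Since loose composition in $\SMULT$ is a pullback while in $\SQ(\Set)$ it is composition of functions, $R$ is genuinely lax: for composable $f \colon A \to B$ and $g \colon B \to C$ the pullback defining $R(g) \circ R(f)$ is canonically $A \times B \times C$, and the composition comparison cell is the projection $A \times B \times C \to A \times C$ forgetting the $B$-coordinate; the unit comparison cell at $A$ is given by the diagonal $\Delta_{A} \colon A \to A \times A$. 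Associativity and unit coherence for $R$ reduce to routine diagram chasing in $\Set$.

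The unit $\eta \colon 1_{\SMULT} \Rightarrow R U_{1}$ is then defined component-wise: at a split multivalued function $(s, X, t, \sigma) \colon A \lto B$, the component is the globular cell with identity tight legs and middle function the pairing $\langle s, t \rangle \colon X \to A \times B$. This is a well-defined cell in $\SMULT$ because $\pi_{A} \langle s, t \rangle = s$, $\pi_{B} \langle s, t \rangle = t$, and $\langle s, t \rangle \circ \sigma = \langle s\sigma, t\sigma \rangle = \langle 1_{A}, t\sigma \rangle$. Naturality at a cell with middle $\alpha$ and tight legs $f, g$ follows from the cell axioms $s' \alpha = fs$ and $t' \alpha = gt$, which together yield $(f \times g) \circ \langle s, t \rangle = \langle s', t' \rangle \circ \alpha$. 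The two triangle identities then hold strictly: $U_{1} \cdot \eta = 1_{U_{1}}$ since $\pi_{B} \langle s, t \rangle = t$ makes $U_{1}$ of the unit cell the identity square on $t \sigma$, and $\eta \cdot R = 1_{R}$ since at a function $f$ the component has middle $\langle \pi_{A}, \pi_{B} \rangle = 1_{A \times B}$.

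The main delicate point is verifying compatibility of $\eta$ with the lax structure on $R$ (both the unit and composition comparison cells), which is required for $\eta$ to qualify as a tight transformation of lax double functors. Each such check reduces to a diagram chase using the universal property of products in $\Set$ and, while it must be carried out carefully against the pullback presentation of loose composition in $\SMULT$, presents no substantive obstacle.
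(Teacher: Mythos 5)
Your proposal is correct and follows exactly the route the paper intends: the paper states this proposition without proof, recording only the unit component afterward, and your unit cell with middle $\langle s, t\rangle \colon X \to A \times B$ is precisely that displayed component (the paper's version even contains a typo, labelling the source leg of $R(t\sigma)$ as $\pi_B$ instead of $\pi_A$). Your identification of the counit as the identity (since $U_1 R = 1$ on the nose), the lax structure cells $\Delta_A$ and the projection $A \times B \times C \to A \times C$, and the strict triangle identities all check out, so you have in effect supplied the verification the paper omits.
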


% The double functor $U_{1} \colon \SMULT \to \SQ(\Set)$ also admits right adjoint which sends a function $A \to B$ to the split multivalued function $(\pi_{A}, A \times B, \pi_{B}, \langle 1_{A}, f \rangle) \colon A \lto B$. 
The component of the unit of this adjunction at a split multivalued function $(s, X, t, \sigma) \colon A \lto B$ is given by the following cell. 
\begin{equation*}
    \begin{tikzcd}
        {A } & X & B \\
        A & {A \times B} & B
        \arrow["\sigma", shift left=2, tail, from=1-1, to=1-2]
        \arrow["{1_{A}}"', from=1-1, to=2-1]
        \arrow["s", shift left=2, two heads, from=1-2, to=1-1]
        \arrow["t", from=1-2, to=1-3]
        \arrow["{\langle s, t \rangle}", from=1-2, to=2-2]
        \arrow["{1_{B}}", from=1-3, to=2-3]
        \arrow["{\langle 1_{A}, t \sigma \rangle}", shift left=2, tail, from=2-1, to=2-2]
        \arrow["{\pi_{B}}", shift left=2, two heads, from=2-2, to=2-1]
        \arrow["{\pi_{B}}"', from=2-2, to=2-3]
    \end{tikzcd}
\end{equation*}
Applying the $2$-functor $\Idx(\Cat, -)$ to this reflective adjunction of double categories induces an adjunction of categories 
\[
    \begin{tikzcd}
        \DOpf & \Lens
        \arrow[""{name=0, anchor=center, inner sep=0}, shift right=2, hook, from=1-1, to=1-2]
        \arrow[""{name=1, anchor=center, inner sep=0}, "{U_{1}}"', shift right=2, from=1-2, to=1-1]
        \arrow["\dashv"{anchor=center, rotate=-90}, draw=none, from=1, to=0]
    \end{tikzcd}
\]
by Theorem~\ref{theorem:main}, whose unit factorises each delta lens into an \emph{identity-on-objects functor} followed by a \emph{fully faithful delta lens} as discussed in Remark~\ref{remark:DOpf-Lens}.

There is also an adjunction of categories 
\[
    \begin{tikzcd}
        \two & \Set
        \arrow[""{name=0, anchor=center, inner sep=0}, shift right=2, hook, from=1-1, to=1-2]
        \arrow[""{name=1, anchor=center, inner sep=0}, shift right=2, from=1-2, to=1-1]
        \arrow["\dashv"{anchor=center, rotate=-90}, draw=none, from=1, to=0]
    \end{tikzcd}
\]
between the interval category $\two = \{ \bot \to \top \}$ and $\Set$, whose fully faithful right adjoint sends $\bot$ to the empty set and $\top$ to a singleton set, and whose left adjoint sends a set to $\bot$ if it is the empty set and to $\top$ otherwise. 

\begin{corollary}
\label{corollary:SQ2-SQSET-SMULT}
    There is a composable pair of adjunctions as follows. 
    \[
        \begin{tikzcd}
            {\SQ(\two)} & {\SQ(\Set)} & \SMULT
            \arrow[""{name=0, anchor=center, inner sep=0}, shift right=2, hook, from=1-1, to=1-2]
            \arrow[""{name=1, anchor=center, inner sep=0}, shift right=2, from=1-2, to=1-1]
            \arrow[""{name=2, anchor=center, inner sep=0}, shift right=2, hook, from=1-2, to=1-3]
            \arrow[""{name=3, anchor=center, inner sep=0}, "{U_{1}}"', shift right=2, from=1-3, to=1-2]
            \arrow["\dashv"{anchor=center, rotate=-90}, draw=none, from=1, to=0]
            \arrow["\dashv"{anchor=center, rotate=-90}, draw=none, from=3, to=2]
        \end{tikzcd}
    \]
\end{corollary}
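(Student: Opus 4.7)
The plan is to produce the left adjunction $\SQ(\two) \rightleftarrows \SQ(\Set)$ and then observe that the right adjunction $\SQ(\Set) \rightleftarrows \SMULT$ is precisely Proposition~\ref{proposition:reflective-adjunction-SQSET-SMULT}; composing these yields the claimed composable pair.

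For the left adjunction, I would invoke the fact that $\SQ(-) \colon \Cat \to \Dbl$ is a $2$-functor: it sends each category $\C$ to the strict double category of commutative squares in $\C$, each functor to the evident strict double functor, and each natural transformation to a tight transformation. Since $2$-functors preserve adjunctions, applying $\SQ(-)$ to the reflective adjunction $\two \rightleftarrows \Set$ recalled just before the corollary produces a reflective adjunction of double categories $\SQ(\two) \rightleftarrows \SQ(\Set)$ whose right adjoint is fully faithful (a strict double embedding), as it is obtained from applying $\SQ$ to a fully faithful functor. The unit and counit of this adjunction are the images under $\SQ$ of the unit and counit between $\two$ and $\Set$.

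The composable pair of adjunctions in the statement is then obtained by placing this adjunction on the left of Proposition~\ref{proposition:reflective-adjunction-SQSET-SMULT}. No further verification is needed, since both components have already been established.

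The only subtle point—and the main thing to check carefully—is that $\SQ(-)$ really does preserve adjunctions in the appropriate $2$-categorical sense, i.e.\ that it extends to a genuine $2$-functor from $\Cat$ to $\Dbl$ with respect to natural transformations and tight transformations. This is routine: a natural transformation $\alpha \colon F \Rightarrow G$ between functors $\C \to \D$ induces a tight transformation $\SQ(\alpha)$ whose components are the naturality squares of $\alpha$, and functoriality of this assignment with respect to vertical and horizontal composition of natural transformations is immediate. Hence the triangle identities for $\two \rightleftarrows \Set$ transfer verbatim to $\SQ(\two) \rightleftarrows \SQ(\Set)$, completing the proof.
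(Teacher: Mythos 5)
Your proposal is correct and matches the paper's intended (implicit) argument: the right-hand adjunction is exactly Proposition~\ref{proposition:reflective-adjunction-SQSET-SMULT}, and the left-hand one is obtained by applying the $2$-functor $\SQ(-) \colon \Cat \to \Dbl$ to the reflective adjunction $\two \rightleftarrows \Set$ recalled immediately before the corollary, using that $2$-functors preserve adjunctions. The paper offers no further detail, so your explicit check that $\SQ(-)$ sends natural transformations to tight transformations is a reasonable (if routine) addition rather than a deviation.
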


The composite adjunction induces a monad on $\SMULT$, which under the $2$-functor $\Idx(\Cat, -)$ induces a monad on $\Lens$ by Theorem~\ref{theorem:main}.
The unit of this adjunction factorises each delta lens into a \emph{surjective-on-objects delta lens} followed by a \emph{fully faithful injective-on-objects delta lens}.  
This recovers the (epi, mono)-factorisation on the category of small categories and delta lenses \cite[Theorem~4.17]{CholletClarkeJohnsonSongaWangZardini2022}.

\subsection{Split opfibrations as indexed split multivalued functions}

In Section~\ref{sec:split-opfibration}, we defined split opfibrations as delta lenses with the property that the chosen lifts are opcartesian, thus form a full subcategory $\SOpf \hookrightarrow \Lens$.
We also recalled the characterisation of split opfibrations as diagrammatic delta lenses with a certain property (Proposition~\ref{proposition:diagrammatic-SOpf}). 
In the same spirit as Section~\ref{sec:double-cat-of-elements}, we can state the Grothendieck construction for split opfibrations in terms of double categories. 

\begin{proposition}
\label{proposition:classical-Grothendieck}
    There is an equivalence of categories $\SOpf \simeq \Idx(\Cat, \SQ(\Cat))$.
\end{proposition}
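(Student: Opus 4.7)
The plan is to follow the template of the proof of Proposition~\ref{proposition:category-of-elements-SQSET}, replacing $\SQ(\Set)$ with $\SQ(\Cat)$ and invoking the classical Grothendieck construction for split opfibrations \cite{GrothendieckRaynaud1971} in place of the category of elements construction for $\Set$-valued functors.

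First I would observe that $\SQ(\Cat)$ is a thin double category: a cell is uniquely determined by its boundary, since it amounts to the equation that the corresponding square of functors commutes. Consequently every lax double functor $F \colon \LO(\B) \to \SQ(\Cat)$ is automatically strict, as its unit and composition comparison cells are forced to be identities. Unpacking, such data is equivalent to an ordinary functor $F \colon \B \to \Cat$ assigning a small category $F(b)$ to each object $b \in \B$ and a functor $F(u) \colon F(b) \to F(b')$ to each morphism $u \colon b \to b'$. Similarly, a morphism $(k, \theta) \colon (\B, F) \to (\D, G)$ in $\Idx(\Cat, \SQ(\Cat))$ consists of a functor $k \colon \B \to \D$ together with a tight transformation $\theta \colon F \Rightarrow G \circ \LO(k)$; since the tight morphisms of $\SQ(\Cat)$ are functors and its cells are commuting squares, this is precisely a natural transformation $F \Rightarrow G \circ k$ of $\Cat$-valued functors.

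I would then invoke the classical Grothendieck construction, which assigns to a functor $F \colon \B \to \Cat$ the split opfibration $\pi \colon \int_{\B}\! F \to \B$ whose objects are pairs $(b, x)$ with $x \in F(b)$, whose morphisms $(b, x) \to (b', x')$ are pairs $(u, \xi)$ with $u \colon b \to b'$ in $\B$ and $\xi \colon F(u)(x) \to x'$ in $F(b')$, and whose chosen opcartesian lift of $u$ at $(b, x)$ is the pair $(u, 1_{F(u)(x)})$. A natural transformation $\theta \colon F \Rightarrow G \circ k$ corresponds under this construction to a commutative square of functors that preserves the chosen opcartesian lifts, namely a morphism in $\SOpf$ lying over $k$. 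Chaining the bijections of the previous paragraph with this equivalence yields the desired equivalence $\SOpf \simeq \Idx(\Cat, \SQ(\Cat))$.

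The main obstacle is not at the level of objects --- which is immediate from thinness of $\SQ(\Cat)$ together with the classical correspondence between functors $\B \to \Cat$ and split opfibrations over $\B$ --- but at the level of morphisms, where one must carefully verify that the commutative square induced by a natural transformation preserves the chosen opcartesian lifts, and conversely that every morphism of split opfibrations arises in this way. This verification is, however, the standard $2$-categorical content of the classical Grothendieck construction and can be inherited directly from the literature, so the overall structure of the argument is light and parallels Proposition~\ref{proposition:category-of-elements-SQSET} closely.
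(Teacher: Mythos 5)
Your proposal is correct and follows essentially the same route as the paper: observe that $\SQ(\Cat)$ is thin so every lax double functor $\LO(\B) \to \SQ(\Cat)$ is strict and amounts to a functor $\B \to \Cat$, then invoke the classical Grothendieck construction for objects and check the morphism level via natural transformations. The paper's own proof is even terser (it omits the morphism verification entirely), so your additional care there is welcome but not a departure in method.
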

\begin{proof}
    Given a category $\B$, a lax functor $\LO(\B) \to \SQ(\Cat)$ is necessarily strict, and moreover equivalent to a functor $\B \to \Cat$. 
    By the Grothendieck construction, functors $\B \to \Cat$ correspond to split opfibrations into $\B$. 
    A similar argument can be made for the morphisms, and we omit the details.
\end{proof}

By Theorem~\ref{theorem:main}, it must also be possible to characterise split opfibrations as indexed split multivalued functions with a certain property. 
Given a lax double functor $F \colon \LO(\B) \to \SMULT$, we may construct the following cell in $\SMULT$ for each morphism $u \colon x \to y$ in $\B$, where $\varepsilon_{u}$ is the component of the counit \eqref{equation:counit-component}, and $\mu_{(u, 1_{y})}$ is the composition comparison cell of the lax double functor.

\begin{equation}
\label{equation:split-opfibration-cell}
    \begin{tikzcd}[column sep=large]
        {F(x)} & {F(y)} & {F(y)} \\
        {F(x)} & {F(y)} & {F(y)} \\
        {F(x)} && {F(y)}
        \arrow["{\langle 1, K_{\ast}\rangle U_{1}F(u)}", "\shortmid"{marking}, from=1-1, to=1-2]
        \arrow[""{name=0, anchor=center, inner sep=0}, equals, from=1-1, to=2-1]
        \arrow["{F(1_{y})}", "\shortmid"{marking}, from=1-2, to=1-3]
        \arrow[""{name=1, anchor=center, inner sep=0}, equals, from=1-2, to=2-2]
        \arrow[""{name=2, anchor=center, inner sep=0}, equals, from=1-3, to=2-3]
        \arrow["{F(u)}"', "\shortmid"{marking}, from=2-1, to=2-2]
        \arrow[""{name=3, anchor=center, inner sep=0}, equals, from=2-1, to=3-1]
        \arrow["{F(1_{y})}"', "\shortmid"{marking}, from=2-2, to=2-3]
        \arrow[""{name=4, anchor=center, inner sep=0}, equals, from=2-3, to=3-3]
        \arrow["{F(u)}"', "\shortmid"{marking}, from=3-1, to=3-3]
        \arrow["{\varepsilon_{u}}"{description}, draw=none, from=0, to=1]
        \arrow["1"{description}, draw=none, from=1, to=2]
        \arrow["{\mu_{(u, 1_{y})}}"{description}, draw=none, from=3, to=4]
    \end{tikzcd}
\end{equation}

\begin{proposition}
\label{proposition:split-opfibration-as-ismf}
    A lax double functor $F \colon \LO(\B) \to \SMULT$ corresponds to a split opfibration under the equivalence $\Lens \simeq \Idx(\Cat, \SMULT)$ if and only if the cell \eqref{equation:split-opfibration-cell} is invertible (in the tight direction). 
\end{proposition}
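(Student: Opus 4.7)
The plan is to unpack the cell \eqref{equation:split-opfibration-cell} explicitly in terms of the delta lens $(f, \varphi) \colon \A \to \B$ corresponding to $F$ under Theorem~\ref{theorem:main}. Recall from Subsection~\ref{sec:proof-sketch} that under this equivalence, $F(x)$ is the fibre of $f$ over $x$, $F(u)$ is the set of morphisms in $\A$ lying over $u$, the functions $s_{u}$ and $t_{u}$ are source and target, $\sigma_{u}(a) = \varphi(a, u)$ is the chosen lift, and the composition comparison $\mu_{(u, v)}$ encodes composition in $\A$.

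First I would compute the source loose morphism of \eqref{equation:split-opfibration-cell}: the horizontal composite $\langle 1, K_{\ast}\rangle U_{1}F(u) \odot F(1_{y})$ has middle object the pullback $F(x) \times_{F(y)} F(1_{y})$ formed over $t_{u} \sigma_{u} \colon F(x) \to F(y)$ and $s_{1_{y}} \colon F(1_{y}) \to F(y)$. Using the explicit component of the counit \eqref{equation:counit-component}, which picks out the chosen section $\sigma_{u}$, together with the lax comparison cell $\mu_{(u, 1_{y})}$, the cell \eqref{equation:split-opfibration-cell} is determined by the function
\[
    F(x) \times_{F(y)} F(1_{y}) \to F(u), \qquad (a, \xi) \longmapsto \xi \circ \varphi(a, u).
\]
Since the cell is globular in $\SMULT$, by Definition~\ref{definition:double-category-SMult}(d) invertibility in the tight direction is equivalent to this underlying function between sets being a bijection (the compatibility equations for an inverse cell follow automatically from those of the original).

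Next I would show that bijectivity of this function, for every morphism $u \colon x \to y$ in $\B$, is equivalent to the weakly opcartesian property of every chosen lift $\varphi(a, u)$. Given $w \in F(u)$ with $s_{u}(w) = a$, surjectivity provides some $\xi \in F(1_{y})$ with $\xi \circ \varphi(a, u) = w$, and injectivity (noting that $a = s_{u}(w)$ is already determined by $w$) yields its uniqueness. This is precisely the characterisation of weakly opcartesian lifts recalled in Example~\ref{example:split-opfibration}; hence by that example, $(f, \varphi)$ is a split opfibration if and only if \eqref{equation:split-opfibration-cell} is invertible for every $u$.

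The main obstacle is purely bookkeeping: verifying that the tight composite of $\varepsilon_{u} \odot 1_{F(1_{y})}$ followed by $\mu_{(u, 1_{y})}$ genuinely has the claimed underlying function. This requires chasing the section $\sigma_{u}$ out of \eqref{equation:counit-component} and then recognising the function $\mu_{(u, 1_{y})} \colon F(u) \times_{F(y)} F(1_{y}) \to F(u \circ 1_{y}) = F(u)$ as composition in $\A$. Once this identification is made, the equivalence with the split opfibration condition is an immediate reading of Example~\ref{example:split-opfibration}.
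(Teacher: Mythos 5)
Your proposal is correct and follows essentially the same route as the paper: both identify the underlying function of the cell \eqref{equation:split-opfibration-cell} as $(a,\xi) \mapsto \xi \circ \varphi(a,u)$ on the pullback $F(x) \times_{F(y)} F(1_{y})$, reduce tight invertibility of the globular cell to bijectivity of this function (the paper phrases bijectivity via an auxiliary $\chi_{u}$ and three commuting diagrams, you read off surjectivity and injectivity directly), and then recognise this as the weakly opcartesian condition of Example~\ref{example:split-opfibration}. The remaining work you flag as bookkeeping is exactly the computation the paper carries out.
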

\begin{proof}
    Consider the composite of the loose morphisms $\langle 1, K_{\ast}\rangle U_{1}F(u) \colon F(x) \lto F(y)$ and $F(1_{y}) \colon F(y) \lto F(y)$ in $\SMULT$, where $F(u)$ is the split multivalued function $(s_{u}, F(u), t_{u}, \varphi_{u})$, and we use notation $s_{y} \coloneqq s_{1_{y}}$, $t_{y} \coloneqq t_{1_{y}}$ and $\varphi_{y} \coloneqq \varphi_{1_{y}}$ for clarity. 
    \begin{equation*}
        \begin{tikzcd}
            && {F(x, 1_{y})} \\
            & {F(x)} && {F(1_{y})} \\
            {F(x)} && {F(y)} && {F(y)}
            \arrow["{\pi_{x}}", shift left=2, two heads, from=1-3, to=2-2]
            \arrow["{\pi_{y}}", from=1-3, to=2-4]
            \arrow["\lrcorner"{anchor=center, pos=0.125, rotate=-45}, draw=none, from=1-3, to=3-3]
            \arrow["{\langle 1_{F(x)}, \varphi_{y}t_{u}\varphi_{u} \rangle}", shift left=2, tail, from=2-2, to=1-3]
            \arrow["{1_{F(x)}}", shift left=2, two heads, from=2-2, to=3-1]
            \arrow["{t_{u}\varphi_{u}}", from=2-2, to=3-3]
            \arrow["{s_{y}}", shift left=2, two heads, from=2-4, to=3-3]
            \arrow["{t_{y}}", from=2-4, to=3-5]
            \arrow["{1_{F(x)}}", shift left=2, tail, from=3-1, to=2-2]
            \arrow["{\varphi_{y}}", shift left=2, tail, from=3-3, to=2-4]
        \end{tikzcd}
    \end{equation*}
    Next we may consider the composite of the loose morphisms $F(u) \colon F(x) \lto F(y)$ and $F(1_{y}) \colon F(y) \lto F(y)$ in $\SMULT$. 
    \begin{equation*}
        \begin{tikzcd}
            && {F(u, 1_{y})} \\
            & {F(u)} && {F(1_{y})} \\
            {F(x)} && {F(y)} && {F(y)}
            \arrow["{\pi_{u}}", shift left=2, two heads, from=1-3, to=2-2]
            \arrow["{\pi_{y}'}", from=1-3, to=2-4]
            \arrow["\lrcorner"{anchor=center, pos=0.125, rotate=-45}, draw=none, from=1-3, to=3-3]
            \arrow["{\langle 1_{F(u)}, \varphi_{y}t_{u} \rangle}", shift left=2, tail, from=2-2, to=1-3]
            \arrow["{s_{u}}", shift left=2, two heads, from=2-2, to=3-1]
            \arrow["{t_{u}}", from=2-2, to=3-3]
            \arrow["{s_{y}}", shift left=2, two heads, from=2-4, to=3-3]
            \arrow["{t_{y}}", from=2-4, to=3-5]
            \arrow["{\varphi_{u}}", shift left=2, tail, from=3-1, to=2-2]
            \arrow["{\varphi_{y}}", shift left=2, tail, from=3-3, to=2-4]
        \end{tikzcd}
    \end{equation*}
    There is a function $\varphi_{u} \times 1 \colon F(x, 1_{y}) \to F(u, 1_{y})$ such that $\pi_{u}(\varphi_{u} \times 1) = \varphi_{u} \pi_{x}$ and $\pi_{y}'(\varphi_{u} \times 1) = \pi_{y}$ by the universal property of the pullback, and a function $\mu_{(u, 1_{y})} \colon F(u, 1_{y}) \to F(u)$ from the composition comparison of the lax double functor. 
    
    The cell \eqref{equation:split-opfibration-cell} is invertible if and only if the function 
    \[
        \mu_{(u, 1_{y})}(\varphi_{u} \times 1) \colon F(x, 1_{y}) \to F(u)
    \]
    is a bijection, which holds if and only if there exists a function $\chi_{u} \colon F(u) \to F(1_{y})$ rendering the following three diagrams in $\Set$ commutative. 
    \begin{equation*}
        \begin{tikzcd}
            {F(u)} & {F(1_{y})} \\
            {F(x)} & {F(y)}
            \arrow["{\chi_{u}}", from=1-1, to=1-2]
            \arrow["{s_{u}}"', from=1-1, to=2-1]
            \arrow["{s_{y}}", from=1-2, to=2-2]
            \arrow["{t_{u}\varphi_{u}}"', from=2-1, to=2-2]
        \end{tikzcd}
        \qquad
        \begin{tikzcd}
            {F(x, 1_{y})} & {F(1_{y})} \\
            {F(u, 1_{y})} & {F(u)}
            \arrow["{\pi_{y}}", from=1-1, to=1-2]
            \arrow["{\varphi_{u} \times 1}"', from=1-1, to=2-1]
            \arrow["{\mu_{(u, 1_{y})}}"', from=2-1, to=2-2]
            \arrow["{\chi_{u}}"', from=2-2, to=1-2]
        \end{tikzcd}
        \qquad 
        \begin{tikzcd}
            {F(u)} \\
            {F(u, 1_{y})} & {F(u)}
            \arrow["{\langle \varphi_{u}s_{u}, \chi_{u} \rangle}"', from=1-1, to=2-1]
            \arrow["{1_{F(u)}}", from=1-1, to=2-2]
            \arrow["{\mu_{(u, 1_{y})}}"', from=2-1, to=2-2]
        \end{tikzcd}
    \end{equation*}
    Altogether, these diagrams are equivalent to stating that for each $\alpha \in F(u)$ there exists a unique $\chi_{u}(\alpha) \in F(1_{y})$ such that $\mu_{(u, 1_{y})}(\varphi_{u}s_{u}(\alpha), \chi_{u}(\alpha)) = \alpha$. 
    Under the equivalence $\el \colon \Idx(\Cat, \SMULT) \simeq \Lens$, this means that the chosen lifts of the delta lens $\el(\B, F) \to \B$ are \emph{weakly opcartesian}, which by Example~\ref{example:split-opfibration} characterises the delta lens as a split opfibration. 
\end{proof}

One novelty of this characterisation of split opfibrations is that it is much closer to the characterisation of functors as lax double functors $\LO(\B) \to \SPAN$ rather than as normal lax double functors $\LO(\B) \to \PROF$ into the double category $\PROF$ of categories, functors, and profunctors. 
Proposition~\ref{proposition:split-opfibration-as-ismf} may also be understood as providing an \emph{indexed version} of the characterisation of internal split opfibrations as internal delta lenses \cite[Proposition~5.4]{Clarke2020b}.

\subsection{Delta lenses as normal lax double functors}
For a double category $\DD$ with local coequalisers preserved by loose composition, there is a double category $\MOD(\DD)$ whose objects are loose monads, whose tight morphisms are monad morphisms, and whose loose morphisms are \emph{bimodules} of monads \cite{Shulman2008}. 
Cruttwell and Shulman \cite{CruttwellShulman2010} showed that there is a correspondence as follows, where \emph{normal} means that the unit comparison of the lax double functor is the identity. 
\begin{center}
    lax double functors $\CC \to \DD$ 
    \quad $\leftrightsquigarrow$ \quad
    normal lax double functors $\CC \to \MOD(\DD)$
\end{center}

Let $\Idx(\Cat, \XX)_{\mathrm{n}}$ denote the full subcategory of $\Idx(\Cat, \XX)$ whose objects are normal lax double functors into $\XX$. 
We may restate the correspondence of Cruttwell and Shulman as follows. 

\begin{lemma}
\label{lemma:lax-to-normal-lax}
There is an equivalence of categories $\Idx(\Cat, \XX) \simeq \Idx(\Cat, \MOD(\XX))_{\mathrm{n}}$.
\end{lemma}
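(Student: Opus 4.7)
The plan is to bootstrap the claimed equivalence from the Cruttwell--Shulman correspondence, applied pointwise in the base category and then assembled over $\Cat$.

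First, for each fixed category $\B$, I would invoke the cited result to obtain a bijection between lax double functors $F \colon \LO(\B) \to \XX$ and normal lax double functors $\widetilde{F} \colon \LO(\B) \to \MOD(\XX)$. This yields the assignment on objects: the pair $(\B, F)$ is sent to $(\B, \widetilde{F})$, with the inverse constructed by the same recipe. Here the underlying objects of $\LO(\B)$ (which are the objects of $\B$) are sent to loose monads of $\XX$ given by the unit comparison of $F$ on the identity loose morphisms $\LO(\B)$, and the loose morphisms of $\LO(\B)$ (which are the morphisms of $\B$) are sent to bimodules by exploiting the composition comparison cells of $F$.

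Second, I would verify that this correspondence is $2$-functorial with respect to tight transformations. Given lax double functors $F, G \colon \LO(\B) \to \XX$ and a tight transformation $\theta \colon F \Rightarrow G$, the Cruttwell--Shulman construction should produce a tight transformation $\widetilde{\theta} \colon \widetilde{F} \Rightarrow \widetilde{G}$ whose components are morphisms of monads and bimodules obtained from the components of $\theta$ and the naturality with respect to the comparison cells. I would then check that this assignment preserves identities and composition of tight transformations, promoting the object-level bijection into a bijection on hom-sets with fixed base category.

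Third, I would confirm naturality in the base category: for a functor $k \colon \B \to \D$ and a lax double functor $G \colon \LO(\D) \to \XX$, the lax double functor $G \circ \LO(k)$ is sent to $\widetilde{G} \circ \LO(k)$ under the bijection. This is immediate because $\LO(k)$ is a strict double functor, and the Cruttwell--Shulman construction is manifestly compatible with precomposition by strict double functors: the monads and bimodules in the image of $G \circ \LO(k)$ are just the reindexing of those in the image of $G$.

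Combining these three points, a morphism $(k, \theta) \colon (\B, F) \to (\D, G)$ in $\Idx(\Cat, \XX)$ corresponds to a morphism $(k, \widetilde{\theta}) \colon (\B, \widetilde{F}) \to (\D, \widetilde{G})$ in $\Idx(\Cat, \MOD(\XX))_{\mathrm{n}}$, and this assignment is functorial and essentially bijective. The main obstacle is verifying the $2$-functoriality of the Cruttwell--Shulman construction with respect to tight transformations and strict whiskering; both are routine but somewhat tedious, and the cleanest strategy is to invoke the two-dimensional enhancement of the correspondence rather than check each axiom by hand.
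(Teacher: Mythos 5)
Your proposal is correct and follows essentially the same route as the paper, which simply presents this lemma as a restatement of the Cruttwell--Shulman correspondence between lax double functors $\CC \to \DD$ and normal lax double functors $\CC \to \MOD(\DD)$, extended to tight transformations and reindexed over the base category. The only implicit hypothesis worth keeping in mind is that $\XX$ must have local coequalisers preserved by loose composition for $\MOD(\XX)$ to exist, which the paper assumes throughout this subsection.
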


The double category $\MOD(\SPAN)$ is equivalent to the double category $\PROF$, since loose monads are small categories, tight monad morphisms are functors, and bimodules of monads are profunctors.
Applying Lemma~\ref{lemma:lax-to-normal-lax} and Proposition~\ref{proposition:category-of-elements-SPAN}, we recover the well-known characterisation of functors into $\B$ as normal lax (double) functors $\LO(\B) \to \PROF$ due to Bénabou. 

\begin{corollary}
    There is an equivalence of categories $\Cat^{\two} \simeq \Idx(\Cat, \PROF)_{\mathrm{n}}$. 
\end{corollary}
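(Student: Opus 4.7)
The plan is to assemble the corollary directly from the three ingredients just recalled, with no new work required beyond checking that the pieces fit together. First, I would invoke Proposition~\ref{proposition:category-of-elements-SPAN} to obtain the equivalence $\Cat^{\two} \simeq \Idx(\Cat, \SPAN)$. Next, I would apply Lemma~\ref{lemma:lax-to-normal-lax} at $\XX = \SPAN$ to obtain $\Idx(\Cat, \SPAN) \simeq \Idx(\Cat, \MOD(\SPAN))_{\mathrm{n}}$. Finally, the equivalence of double categories $\MOD(\SPAN) \simeq \PROF$ recalled in the text yields an equivalence $\Idx(\Cat, \MOD(\SPAN))_{\mathrm{n}} \simeq \Idx(\Cat, \PROF)_{\mathrm{n}}$ by post-composition. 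Concatenating these three equivalences produces the desired equivalence $\Cat^{\two} \simeq \Idx(\Cat, \PROF)_{\mathrm{n}}$.

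The only subtle point to verify is that the equivalence $\MOD(\SPAN) \simeq \PROF$ restricts to one between categories of \emph{normal} lax double functors out of $\LO(\B)$. This is immediate, however, because normality is a property of the unit comparison cell of a lax double functor, and post-composition with a strict double functor preserves this property. Hence $\Idx(\Cat, -)_{\mathrm{n}}$ is functorial along equivalences of double categories, and the chain goes through without incident.

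There is, in truth, no genuine obstacle here: all the substantive content sits upstream, in Proposition~\ref{proposition:category-of-elements-SPAN} (due to Paré and to Pavlović-Abramsky) and in the Cruttwell-Shulman correspondence underlying Lemma~\ref{lemma:lax-to-normal-lax}. Unpacking the composite of the equivalences recovers the classical description due to Bénabou, under which a functor $f \colon \A \to \B$ corresponds to the normal lax double functor $\LO(\B) \to \PROF$ whose value at each object $x$ is the fibre of $f$ over $x$, and whose value at each morphism $u \colon x \to y$ is the profunctor of morphisms in $\A$ lying over $u$.
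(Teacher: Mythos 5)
Your proposal is correct and follows exactly the route the paper intends: compose the equivalence $\Cat^{\two} \simeq \Idx(\Cat, \SPAN)$ of Proposition~\ref{proposition:category-of-elements-SPAN} with Lemma~\ref{lemma:lax-to-normal-lax} applied to $\XX = \SPAN$, and then transport along $\MOD(\SPAN) \simeq \PROF$. Your added remark that normality is preserved under post-composition is the right (and only) point needing verification, and it holds as you say.
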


The double category $\SMULT$ has local coequalisers preserved by loose composition, since both $\SQ(\Set)$ and $\SPAN$ admit local coequalisers and $K_{\ast} \colon \SQ(\Set) \to \SPAN$ preserves them. 
Therefore, we can also characterise delta lenses as certain normal lax double functors. 

\begin{proposition}
\label{proposition:delta-lens-as-lndf}
    There is an equivalence $\Lens \simeq \Idx(\Cat, \MOD(\SMULT))_{\mathrm{n}}$.
\end{proposition}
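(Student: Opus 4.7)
The plan is to deduce the claim by splicing together two results already on the table: the main theorem of the paper, which gives $\Lens \simeq \Idx(\Cat, \SMULT)$ (Theorem~\ref{theorem:main}), and the general Cruttwell--Shulman correspondence recorded here as Lemma~\ref{lemma:lax-to-normal-lax}, applied to $\XX = \SMULT$. Composing these immediately yields
\[
    \Lens \;\simeq\; \Idx(\Cat, \SMULT) \;\simeq\; \Idx(\Cat, \MOD(\SMULT))_{\mathrm{n}},
\]
so the entire content of the proposition reduces to checking that the hypothesis of Lemma~\ref{lemma:lax-to-normal-lax} holds for $\SMULT$, namely that $\SMULT$ admits local coequalisers and that these are preserved by loose composition.

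To construct local coequalisers, I would take a parallel pair of globular cells $\alpha, \beta \colon (s, X, t, \sigma) \rightrightarrows (s', X', t', \sigma')$ in $\SMULT$ between loose morphisms $A \lto B$, and form the coequaliser $q \colon X' \twoheadrightarrow Q$ in $\Set$. The maps $s'$ and $t'$ coequalise $\alpha, \beta$ in $\Set$ by hypothesis, so they descend uniquely to $\bar{s} \colon Q \to A$ and $\bar{t} \colon Q \to B$; setting $\bar{\sigma} \coloneqq q \circ \sigma' \colon A \to Q$ gives $\bar{s}\bar{\sigma} = s'\sigma' = 1_A$, so $(\bar{s}, Q, \bar{t}, \bar{\sigma})$ is a bona fide split multivalued function and the projection $q$ is a globular cell out of $(s', X', t', \sigma')$. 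Universality follows from that of $q$ in $\Set$. Equivalently — and this is the conceptual content of the author's hint — one can observe that $U_{1}$ sends this coequaliser to the identity on the function $t'\sigma'$ (which is already the coequaliser in $\SQ(\Set)$ since $\alpha\sigma = \sigma' = \beta\sigma$) and $U_{2}$ sends it to the coequaliser in $\SPAN$, so the $1$-dimensional universal property of $\SMULT$ (Theorem~\ref{theorem:1d-universal-property-SMULT}) together with preservation of local coequalisers by $K_{\ast}$ forces the coequaliser in $\SMULT$ to exist and to have the described shape.

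For preservation by loose composition, the key input is that in $\Set$ the pullback functor along any map preserves colimits (colimits are universal), and in particular preserves coequalisers. Given the coequaliser $q \colon (s', X', t', \sigma') \twoheadrightarrow (\bar{s}, Q, \bar{t}, \bar{\sigma})$ above and another loose morphism $(p_1, Y, p_2, \tau) \colon B \lto C$, the middle object of the composite on the left is the pullback $X' \times_B Y$, while that of the composite of the coequaliser is $Q \times_B Y$; pullback-stability of the coequaliser $q$ in $\Set$ identifies the latter as the coequaliser of $X \times_B Y \rightrightarrows X' \times_B Y$. The induced source, target, and section functions on the coequalised pullback match those obtained by whiskering, completing the check. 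An analogous argument handles whiskering on the other side.

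The genuine obstacle is exactly this pullback-stability verification, because loose composition in $\SMULT$ involves both the pullback of the underlying spans and the induced section $\langle 1, \psi p_2 \rangle$; one must verify that both pieces of data descend compatibly along $q$. Everything else is routine once the stability is in place, and no additional normalisation axiom or coherence needs to be dealt with by hand, since the conclusion is pulled from Lemma~\ref{lemma:lax-to-normal-lax} as a black box.
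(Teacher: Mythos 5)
Your proposal is correct and follows essentially the same route as the paper: the paper's proof is literally "follows immediately from Theorem~\ref{theorem:main} and Lemma~\ref{lemma:lax-to-normal-lax}", with the hypothesis that $\SMULT$ has local coequalisers preserved by loose composition asserted in one sentence beforehand (deduced from the corresponding facts for $\SQ(\Set)$, $\SPAN$, and $K_{\ast}$). Your explicit construction of the local coequaliser $(\bar{s}, Q, \bar{t}, q\sigma')$ and the pullback-stability argument for preservation under loose composition correctly fill in the details the paper leaves implicit.
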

\begin{proof}
    Follows immediately from Theorem~\ref{theorem:main} and Lemma~\ref{lemma:lax-to-normal-lax}. 
\end{proof}

What are the objects, tight morphisms, and loose morphisms in $\MOD(\SMULT)$? 
The double functor $U_{2} \colon \SMULT \to \SPAN$, defined in \eqref{equation:SMULT-to-SPAN}, is faithful, therefore we may view $\SMULT$ as like $\SPAN$ with additional structure on the loose morphisms. 
We may also show that $\MOD(\SMULT)$ can be viewed as $\PROF$ with additional structure on the loose morphisms. 

\begin{proposition}
    The double functor $\MOD(U_{2}) \colon \MOD(\SMULT) \to \MOD(\SPAN)$ is the identity on objects and tight morphisms, and faithful. 
\end{proposition}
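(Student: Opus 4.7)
The plan is to unpack the definitions of loose monads, monad morphisms, and bimodule cells in $\SMULT$, and to exploit the observation that the section $\sigma$ of any loose monad in $\SMULT$ is forced to coincide with the unit of the monad, making the extra data carried by $\SMULT$ redundant at the level of monads.

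For the identity-on-objects claim, I would examine a loose monad on a set $A$ in $\SMULT$: this consists of a split multivalued function $(s, X, t, \sigma) \colon A \lto A$ together with unit and multiplication cells satisfying the usual axioms. By the explicit description of cells in Definition~\ref{definition:double-category-SMult}, the unit cell $1_{A} \Rightarrow (s, X, t, \sigma)$ with identity tight legs amounts to a function $\eta \colon A \to X$ satisfying $s\eta = 1_{A}$, $t\eta = 1_{A}$, and $\eta \circ 1_{A} = \sigma \circ 1_{A}$; that is, $\eta = \sigma$. Hence $\sigma$ is recovered as the unit, and in particular $t\sigma = 1_{A}$ is automatic. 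Conversely, every small category (loose monad in $\SPAN$) extends uniquely to a loose monad in $\SMULT$ by taking $\sigma$ to be its unit, so objects on both sides correspond bijectively and $\MOD(U_{2})$ acts as the identity under this canonical identification. An analogous analysis handles monad morphisms: a cell in $\SMULT$ between loose monads along a function $f \colon A \to B$ carries the extra equation $\alpha \sigma_{A} = \sigma_{B} f$ beyond the data of the underlying cell in $\SPAN$, but under the identification $\sigma_{A} = \eta_{A}$ and $\sigma_{B} = \eta_{B}$ this is precisely unit preservation, which is automatic from the monad morphism axioms.

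For faithfulness, I would observe that $U_{2} \colon \SMULT \to \SPAN$ is already faithful as a double functor at the level of cells: a cell in $\SMULT$ and its image in $\SPAN$ are encoded by the same underlying function $\alpha \colon X \to Y$ between the vertices of the spans, the cell in $\SMULT$ satisfying merely one extra equation $\alpha \circ \varphi = \psi \circ f$ involving the sections. Since a bimodule morphism in $\MOD(\SMULT)$ is just a cell in $\SMULT$ satisfying two action-compatibility conditions (and similarly for $\MOD(\SPAN)$), the faithfulness of $U_{2}$ lifts directly to faithfulness of $\MOD(U_{2})$. The most delicate step in the whole argument is the identification $\eta = \sigma$ at the level of loose monads, which amounts to carefully applying the cell equation $\alpha \circ \varphi = \psi \circ f$ to the unit cell; once this is in hand the remaining verifications are routine.
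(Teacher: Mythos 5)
Your proof is correct and follows exactly the reasoning the paper leaves implicit (the proposition is stated there without proof): the cell equation $\alpha \circ \varphi = \psi \circ f$ applied to the unit cell forces $\sigma = \eta$, so the extra structure carried by $\SMULT$ is redundant at the level of loose monads and monad morphisms, and faithfulness of $U_{2}$ on cells passes directly to $\MOD(U_{2})$. The only step you leave tacit is verifying that the multiplication cell of a small category, regarded in $\SMULT$ with $\sigma = \eta$, satisfies the extra section equation $\mu \circ \langle 1_{X}, \sigma t \rangle \circ \sigma = \sigma$; this reduces to $\mu(\eta(a), \eta(a)) = \eta(a)$, which is immediate from the unit law.
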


Therefore, a loose monad in $\SMULT$ is a small category, and a monad morphism is a functor. 
A bimodule of monads is a triple $(p, f, \varphi)$ which consists of a profunctor $p \colon \A \to \B$, corresponding to a functor $\A^{\op} \times \B \to \Set$, a functor $f \colon \A_{0} \to \B_{0}$ between discrete categories, corresponding to a function $f \colon \obj(\A) \to \obj(\B)$, and a cell in $\PROF$
\begin{equation*}
    \begin{tikzcd}
        {\A_{0}} & {\B_{0}} \\
        \A & \B
        \arrow[""{name=0, anchor=center, inner sep=0}, "{f_{\ast}}", "\shortmid"{marking}, from=1-1, to=1-2]
        \arrow["{\iota_{\A}}"', from=1-1, to=2-1]
        \arrow["{\iota_{\B}}", from=1-2, to=2-2]
        \arrow[""{name=1, anchor=center, inner sep=0}, "p"'{inner sep = 5pt}, "\shortmid"{marking}, from=2-1, to=2-2]
        \arrow["\varphi"{description}, draw=none, from=0, to=1]
    \end{tikzcd}
\end{equation*}
where $f_{\ast} \colon \A_{0} \lto \B_{0}$ is the representable profunctor such that $f_{\ast}(a, b)$ is the singleton if $fa = b$ and the empty set otherwise, and $\iota_{\A}$ and $\iota_{\B}$ are the canonical identity-on-objects functors.
The cell $\varphi$ above amounts to the choice of an element $\varphi_{a} \in p(a, fa)$ for each object $a \in \A$. 
A cell in $\MOD(\SMULT)$ is a cell in $\PROF$ which commutes with this additional structure on loose morphisms. 

There is a double functor $K_{\ast}' \colon \SQ(\Cat) \to \MOD(\SMULT)$ which is the identity on objects and tight morphisms, and whose assignment on loose morphisms sends a functor $f \colon \A \to \B$ to the triple $(f_{\ast}, f_{0}, \eta)$ where $f_{\ast} \colon \A \to \B$ is given by $f_{\ast}(a, b) = \B(fa, b)$, $f_{0} \colon \A_{0} \to \B_{0}$ is determined by the underlying object assignment of $f$, and $\eta_{a} \in \B(fa, fa)$ chooses the element $1_{fa}$. 

\begin{corollary}
\label{corollary:split-opfibration-nldf}
    A normal lax double functor $F \colon \LO(\B) \to \MOD(\SMULT)$ corresponds to a split opfibration under the equivalence $\Lens \simeq \Idx(\Cat, \MOD(\SMULT))_{\mathrm{n}}$ if and only if it factors through $K_{\ast}' \colon \SQ(\Cat) \to \MOD(\SMULT)$. 
\end{corollary}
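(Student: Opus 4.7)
The plan is to reduce the statement to the classical Grothendieck correspondence by establishing that the following square commutes up to natural isomorphism:
\begin{equation*}
    \begin{tikzcd}[column sep = large]
        {\Idx(\Cat, \SQ(\Cat))} & {\Idx(\Cat, \MOD(\SMULT))_{\mathrm{n}}} \\
        \SOpf & \Lens
        \arrow["{K_{\ast}' \circ (-)}", from=1-1, to=1-2]
        \arrow["\simeq"', from=1-1, to=2-1]
        \arrow["\simeq", from=1-2, to=2-2]
        \arrow[hook, from=2-1, to=2-2]
    \end{tikzcd}
\end{equation*}
Here the top arrow is post-composition with $K_{\ast}'$, the left equivalence is Proposition~\ref{proposition:classical-Grothendieck}, and the right is Proposition~\ref{proposition:delta-lens-as-lndf}. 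Once the square is known to commute, the corollary follows at once: since $K_{\ast}'$ is the identity on objects and tight morphisms, a normal lax double functor $F \colon \LO(\B) \to \MOD(\SMULT)$ factors through $K_{\ast}'$ if and only if each loose component $F(u)$ lies in the image of $K_{\ast}'$, which is exactly the condition that $F$ lie in the essential image of post-composition with $K_{\ast}'$, and hence corresponds to a split opfibration via the square.

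To verify commutativity, I would trace both composites starting from a functor $G \colon \B \to \Cat$ with $x \mapsto \A_{x}$, $u \mapsto u_{!}$. The left-then-bottom composite produces the Grothendieck split opfibration $\pi \colon \el(\B, G) \to \B$ whose objects are pairs $(x, a)$ with $a \in \A_{x}$ and whose morphisms $(x, a) \to (y, a')$ are pairs $(u \colon x \to y, \beta \colon u_{!}(a) \to a')$. The top-then-right composite first yields the normal lax double functor $K_{\ast}' G$, sending $u$ to the representable bimodule $((u_{!})_{\ast}, (u_{!})_{0}, \eta)$; unpacking this via Lemma~\ref{lemma:lax-to-normal-lax} yields an indexed split multivalued function $F \colon \LO(\B) \to \SMULT$ with $F(x) = \obj(\A_{x})$, loose part $F(u) = \{(a, \beta) \mid a \in \A_{x}, \, \beta \colon u_{!}(a) \to a' \in \A_{y}\}$, source and target $s_{u}(a, \beta) = a$, $t_{u}(a, \beta) = a'$, and section $\sigma_{u}(a) = (a, 1_{u_{!}(a)})$. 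Theorem~\ref{theorem:main} then returns the category of elements, which is visibly isomorphic to $\el(\B, G)$, and whose chosen lifts $(u, 1_{u_{!}(a)})$ are opcartesian.

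The principal obstacle is the bookkeeping involved in passing through the Cruttwell--Shulman correspondence $\Idx(\Cat, \SMULT) \simeq \Idx(\Cat, \MOD(\SMULT))_{\mathrm{n}}$ of Lemma~\ref{lemma:lax-to-normal-lax}: one must verify that the loose monad $K_{\ast}'G(x)$ (encoding the category $\A_{x}$ as a monad in $\SMULT$) and the bimodules $K_{\ast}'G(u) = ((u_{!})_{\ast}, (u_{!})_{0}, \eta)$ unpack to exactly the data described above, and that the unit elements $1_{u_{!}(a)}$ coming from the monoid structure of the representable bimodule correctly supply the sections $\sigma_{u}$. Once this is in place, the opcartesianness of the chosen lifts $(u, 1_{u_{!}(a)})$ — equivalently, the invertibility of the cell~\eqref{equation:split-opfibration-cell} supplied by Proposition~\ref{proposition:split-opfibration-as-ismf} — is immediate from the representable form of each bimodule $F(u)$.
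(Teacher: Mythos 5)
Your proposal is correct and takes essentially the same route as the paper: the paper's proof of this corollary is the single line ``Follows immediately from Proposition~\ref{proposition:classical-Grothendieck}'', i.e.\ precisely the reduction to the equivalence $\SOpf \simeq \Idx(\Cat, \SQ(\Cat))$ that your commuting square makes explicit. Your unpacking of the representable bimodules through the Cruttwell--Shulman correspondence and the identification of the resulting category of elements with the classical Grothendieck construction is a faithful (and more detailed) elaboration of what the paper leaves implicit.
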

\begin{proof}
    Follows immediately from Proposition~\ref{proposition:classical-Grothendieck}. 
\end{proof}

Despite the characterisation of delta lenses as normal lax double functors in Proposition~\ref{proposition:delta-lens-as-lndf} being arguably more complex than the characterisation as lax double functors in Theorem~\ref{theorem:main}, the corresponding characterisation of split opfibrations in Corollary~\ref{corollary:split-opfibration-nldf} is much simpler than that of Proposition~\ref{proposition:split-opfibration-as-ismf}. 

\subsection{Pullback and pushforward of indexed split multivalued functions}

There is a forgetful functor $\pi \colon \Idx(\Cat, \SMULT) \to \Cat$ which sends $(\B, F)$ to $\B$; under the equivalence of Theorem~\ref{theorem:main} this corresponds to $\cod \colon \Lens \to \Cat$ which sends a delta lens $(f, \varphi) \colon A \to \B$ to its codomain. 

\begin{proposition}
    The functor $\pi \colon \Idx(\Cat, \SMULT) \to \Cat$ has both a left adjoint and right adjoint. 
\end{proposition}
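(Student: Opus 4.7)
The plan is to exhibit both adjoints fiberwise, using the equivalence $\Lens \simeq \Idx(\Cat, \SMULT)$ of Theorem~\ref{theorem:main} to work on the $\Lens$ side where the description is most concrete. The fiber of $\cod \colon \Lens \to \Cat$ over a category $\B$ has both an initial and a terminal object, and these will define the values of the left and right adjoints at $\B$.

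For the right adjoint, I would take $R(\B) = 1_{\B} \colon \B \to \B$ with its trivial delta lens structure $\varphi(b, u) = u$. Given any delta lens $(f, \varphi) \colon \A \to \C$, a lens morphism $(h, k) \colon (f, \varphi) \to 1_{\B}$ forces $h = kf$, and the lift-preservation equation $h\varphi(a, u) = ku$ holds automatically since $kf\varphi(a, u) = ku$ by axiom~\ref{DL1}. Hence such morphisms are in natural bijection with functors $\C \to \B$, exhibiting $R$ as right adjoint to $\cod$. For the left adjoint, I would take $L(\B)$ to be the unique functor from the empty category to $\B$, together with its vacuous delta lens structure: a morphism of lenses $L(\B) \to (g, \psi)$ has a unique first component, both defining equations hold vacuously, and so such morphisms are in natural bijection with functors $\B \to \cod(g, \psi)$.

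Translating back via Theorem~\ref{theorem:main}, $L(\B)$ and $R(\B)$ correspond to the constantly-empty and constantly-singleton lax double functors $\LO(\B) \to \SMULT$ respectively, which are the initial and terminal objects of the fiber $\pi^{-1}(\B)$; this confirms the identification on the indexed side. Functoriality of $L$ and $R$ in $\B$ and naturality of the hom-set bijections then follow routinely from the universal properties of the empty category and the identity functor. There is no substantive obstacle in the argument; the content lies entirely in identifying the correct adjoints, and the verifications are essentially automatic consequences of axiom~\ref{DL1}.
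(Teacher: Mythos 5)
Your proposal is correct and identifies exactly the same adjoints as the paper, namely the constant-empty and constant-singleton lax double functors $\LO(\B) \to \SMULT$, which you correctly recognise as corresponding to the empty lens $\emptyset \to \B$ and the identity lens $1_{\B}$ under the equivalence of Theorem~\ref{theorem:main}. The paper simply names these adjoints on the indexed side without further comment, whereas you carry out the hom-set verifications explicitly on the $\Lens$ side via axiom~\ref{DL1}; the substance is the same.
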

\begin{proof}
    The left adjoint sends each category $\B$ to the constant strict double functor $\emptyset \colon \LO(\B) \to \SMULT$ which chooses the empty set.
    The right adjoint sends each category $\B$ to the constant strict double functor $\ast \colon \LO(\B) \to \SMULT$ which chooses the terminal set. 
\end{proof}

The pullback of a delta lens along a functor (which may also have a delta lens structure) has been important in the study of \emph{symmetric delta lenses} \cite{DiMeglio2023,JohnsonRosebrugh2015}. 
In the context of indexed split multivalued functions, the notion of pullback is very easy to capture. 

\begin{proposition}
\label{proposition:pi-fibration}
    The functor $\pi \colon \Idx(\Cat, \SMULT) \to \Cat$ is a fibration.
\end{proposition}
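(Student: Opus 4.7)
The plan is to exhibit cartesian lifts explicitly as pullbacks of lax double functors along the embedding $\LO \colon \Cat \to \Dbl$. Given an object $(\D, G)$ of $\Idx(\Cat, \SMULT)$ and a functor $k \colon \B \to \D$, define $k^{*}G \colon \LO(\B) \to \SMULT$ to be the composite lax double functor $G \circ \LO(k)$. This composite is well-defined because $\LO$ is strictly functorial (so that $\LO(k)$ is an ordinary, strict double functor) and lax double functors are closed under precomposition with double functors. Let the candidate cartesian morphism be
\[
    (k, \id) \colon (\B, k^{*}G) \longrightarrow (\D, G),
\]
where $\id$ is the identity tight transformation on $k^{*}G = G \circ \LO(k)$.

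To verify the universal property of a cartesian morphism, suppose $(h, \psi) \colon (\C, H) \to (\D, G)$ is any morphism in $\Idx(\Cat, \SMULT)$, and $\ell \colon \C \to \B$ is a functor with $k \circ \ell = h$. Strict functoriality of $\LO$ gives the strict equality
\[
    G \circ \LO(h) \;=\; G \circ \LO(k) \circ \LO(\ell) \;=\; k^{*}G \circ \LO(\ell),
\]
so the given transformation $\psi \colon H \Rightarrow G \circ \LO(h)$ may be reinterpreted, without any modification of its components, as a transformation $\phi \colon H \Rightarrow k^{*}G \circ \LO(\ell)$. This determines a morphism $(\ell, \phi) \colon (\C, H) \to (\B, k^{*}G)$ in $\Idx(\Cat, \SMULT)$ lying over $\ell$.

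Finally, I would check that the composite $(k, \id) \circ (\ell, \phi)$ equals $(h, \psi)$: its underlying functor is $k \circ \ell = h$, and its transformation component is obtained by whiskering the identity transformation on $k^{*}G$ by $\LO(\ell)$ and then composing with $\phi$, which recovers $\phi = \psi$. Uniqueness is immediate since any lift $(\ell, \phi')$ over $\ell$ whose composite with $(k, \id)$ is $(h, \psi)$ is forced to have transformation component $\phi' = \psi$ by the same computation. There is no serious obstacle here; the only thing to be careful about is the strict (rather than merely pseudo) equality $G \circ \LO(h) = k^{*}G \circ \LO(\ell)$, and this is precisely what is guaranteed by the strict functoriality of $\LO$.
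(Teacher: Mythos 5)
Your proposal is correct and follows essentially the same route as the paper: the cartesian lift of $k$ at $(\D, G)$ is obtained by precomposition, $k^{*}G = G \circ \LO(k)$, with identity tight transformation, and the universal property reduces to the strict equality $G \circ \LO(k) \circ \LO(\ell) = G \circ \LO(k\ell)$. The paper states only the construction; you have additionally spelled out the verification that the lift is cartesian, which is fine.
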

\begin{proof}
    Given an object $(\B, F \colon \LO(\B) \to \SMULT)$ in $\Idx(\Cat, \SMULT)$ and a morphism $k \colon \D \to \B$ in $\Cat$, there is a cartesian lift  
    \begin{equation*}
        \begin{tikzcd}[row sep=tiny]
            {\LO(\D)} \\
            & \SMULT \\
            {\LO(\B)}
            \arrow[""{name=0, anchor=center, inner sep=0}, "{\LO(k) \circ F}", from=1-1, to=2-2]
            \arrow["{\LO(k)}"', from=1-1, to=3-1]
            \arrow[""{name=1, anchor=center, inner sep=0}, "F"', from=3-1, to=2-2]
            \arrow[""{name=1p, anchor=center, inner sep=0}, phantom, from=3-1, to=2-2, start anchor=center, end anchor=center]
            \arrow["{\id }"', shorten <=4pt, shorten >=4pt, Rightarrow, from=0, to=1p]
        \end{tikzcd}
    \end{equation*}
    given by pre-composition of $F$ by $\LO(k)$.
\end{proof}

Surprisingly, there is also a notion of pushforward of a delta lens along a functor out of its codomain. 
In the setting of double categories, this amounts to the left Kan extension of a lax double functor $F \colon \LO(\B) \to \SMULT$ along a strict double functor $\LO(k) \colon \LO(\B) \to \LO(\D)$. 
Although Left Kan extensions in this setting have been considered by Grandis and Paré \cite{GrandisPare2007}, it is unclear if $\SMULT$ has enough colimits in general. 
Fortunately, we are able to work in the setting of diagrammatic delta lenses where the pushforward is relatively easy to compute. 

\begin{proposition}
\label{proposition:pi-opfibration}
    The functor $\pi \colon \Idx(\Cat, \SMULT) \to \Cat$ is an opfibration. 
\end{proposition}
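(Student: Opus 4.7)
The plan is to exploit the equivalences
\[
    \Idx(\Cat, \SMULT) \simeq \DiaLens \simeq \Lens
\]
from Theorem~\ref{theorem:main} and Theorem~\ref{theorem:Lens-DiaLens}, all of which commute with the evident projections to $\Cat$ (sending $(\B, F)$, $(f, p)$, and $(f, \varphi)$ to $\B$, $\cod(f)$, and $\cod(f)$ respectively). Since being an opfibration is preserved under equivalence over the base, it suffices to construct cocartesian lifts for the codomain functor $\DiaLens \to \Cat$, where the construction is more concrete than attempting a direct left Kan extension of lax double functors into $\SMULT$.

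Given a diagrammatic delta lens $(f, p) \colon \X \to \A \to \B$ and a functor $k \colon \B \to \D$, the proposed cocartesian lift is built in two steps. First, pushforward the discrete opfibration $fp \colon \X \to \B$ along $k$---equivalently, left Kan extend the corresponding $\Set$-valued functor along $k$ and pass back through the equivalence $\DOpf \simeq \Idx(\Cat, \SQ(\Set))$ of Proposition~\ref{proposition:category-of-elements-SQSET}---to obtain a discrete opfibration $\pi \colon \Y \to \D$ together with a canonical $\ell \colon \X \to \Y$ over $k$. Second, form the pushout in $\Cat$
\[
    \begin{tikzcd}
        \X \arrow[r, "\ell"] \arrow[d, "p"'] & \Y \arrow[d, "q"] \\
        \A \arrow[r, "h"] & \C
    \end{tikzcd}
\]
and let $g \colon \C \to \D$ be the unique functor satisfying $gh = kf$ and $gq = \pi$, which exists because $\pi \ell = kfp$. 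Since $gq = \pi$ is a discrete opfibration and $q$ is identity-on-objects (the forgetful functor $\obj \colon \Cat \to \Set$ preserves pushouts as a left adjoint, and $\obj(p)$ is a bijection), the pair $(g, q)$ is a diagrammatic delta lens and $(\ell, h, k) \colon (f, p) \to (g, q)$ is a morphism in $\DiaLens$.

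For cocartesianness, consider any morphism $(\ell'', h'', k') \colon (f, p) \to (g'', q'')$ in $\DiaLens$ together with a factorisation $k' = jk$ for some $j \colon \D \to \D''$. Pulling $g'' q''$ back along $j$ to a discrete opfibration over $\D$ and invoking the adjunction $k_{!} \dashv k^{\ast}$ between slices of $\DOpf$ yields a unique $\bar{\ell} \colon \Y \to \Y''$ over $j$ with $\bar{\ell} \ell = \ell''$. The functors $h''$ and $q'' \bar{\ell}$ then agree on $\X$, via $q'' \bar{\ell} \ell = q'' \ell'' = h'' p$, so the pushout produces a unique $\bar{h} \colon \C \to \C''$ satisfying $\bar{h} h = h''$ and $\bar{h} q = q'' \bar{\ell}$; a second appeal to the pushout universal property verifies $g'' \bar{h} = jg$. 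Uniqueness of $(\bar{\ell}, \bar{h}, j)$ is inherited from the uniqueness in each universal property invoked.

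The main technical point---and the essential reason for detouring through $\DiaLens$---is guaranteeing that $q$ remains identity-on-objects after taking the pushout, which follows from the compatibility between identity-on-objects functors and pushouts in $\Cat$. Everything else reduces to assembling the universal properties of the pushforward in $\DOpf$ (a left adjoint) and the pushout in $\Cat$; no new combinatorics involving split multivalued functions is needed, exactly because the diagrammatic reformulation absorbs the structure of the lifting operation into the identity-on-objects functor $p$.
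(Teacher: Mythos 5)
Your proposal is correct and follows essentially the same route as the paper: reduce to showing $\cod \colon \DiaLens \to \Cat$ is an opfibration, push forward the discrete opfibration $fp$ along $k$, and then push out $p$ along the induced comparison functor, using stability of identity-on-objects functors under pushout. The only difference is packaging: the paper phrases the pushforward via the comprehensive factorisation of $kfp$ and verifies opcartesianness via its orthogonality property, whereas you phrase it via left Kan extension of $\Set$-valued functors and the adjunction $k_{!} \dashv k^{\ast}$ on slices of $\DOpf$ --- these are two descriptions of the same construction.
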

\begin{proof}
    The equivalence $\DiaLens \simeq \Idx(\Cat, \SMULT)$ is a fibrewise equivalence with respect to the functor $\pi \colon \Idx(\Cat, \SMULT) \to \Cat$, so it is enough to show that the functor $\cod \colon \DiaLens \to \Cat$ is an opfibration. 

    Given an object $(f \colon \A \to \B, p \colon \X \to \A)$ in $\DiaLens$ and a morphism $g \colon \B \to \C$ in $\Cat$, the opcartesian lift may be computed as follows. 
    \begin{equation*}
        \begin{tikzcd}
            \X & \Y \\
            \A & {\A +_{\X} \Y} \\
            \B & \C
            \arrow["{\text{initial}}", from=1-1, to=1-2]
            \arrow["p", from=1-1, to=2-1]
            \arrow["fp"', curve={height=30pt}, from=1-1, to=3-1]
            \arrow["{\text{i.o.o}}"', from=1-2, to=2-2]
            \arrow["{\text{d. opf}}", curve={height=-30pt}, from=1-2, to=3-2]
            \arrow[from=2-1, to=2-2]
            \arrow["f", from=2-1, to=3-1]
            \arrow["\lrcorner"{anchor=center, pos=0.125, rotate=180}, draw=none, from=2-2, to=1-1]
            \arrow[dashed, from=2-2, to=3-2]
            \arrow["g"', from=3-1, to=3-2]
        \end{tikzcd}
    \end{equation*}
    First, we may consider the composite functor $gfp \colon \X \to \C$ and take its comprehensive factorisation, giving an initial functor $\X \to \Y$ and a discrete opfibration $h \colon \Y \to \C$. 
    Next, we can take the pushout of $p \colon \X \to \A$ along the initial functor to obtain an identity-on-objects functor $q \colon \Y \to \A +_{\X} \Y$ since $p$ is identity-on-objects and these are stable under pushout. 
    Finally, using the universal property of the pushout, we obtain a functor $[gf, h] \colon \A +_{\X} \Y \to \C$, which when precomposed with the identity-on-objects functor $q \colon \Y \to \A +_{\X} \Y$ yields a discrete opfibration $h$. 
    
    Therefore, we have a candidate opcartesian morphism $(f, p) \to ([gf, h], q)$ in $\DiaLens$. 
    To show that has the appropriate universal property, one uses the orthogonality property of the comprehensive factorisation system, and the universal property of the pushout; we omit the straightforward diagram-chasing. 
\end{proof}

By Proposition~\ref{proposition:pi-fibration} and Proposition~\ref{proposition:pi-opfibration}, the functor $\pi \colon \Idx(\Cat, \SMULT) \to \Cat$ is a \emph{bifibration}; we let $[\LO(\B), \SMULT]_{\lax}$ denote its fibre over an object $\B$ in $\Cat$. 

\begin{corollary}
    For each functor $f \colon \A \to \B$, there is an adjunction
    \[
        \begin{tikzcd}
            {[\LO(\B), \SMULT]_{\lax}} & {[\LO(\A), \SMULT]_{\lax}}
            \arrow[""{name=0, anchor=center, inner sep=0}, "{\Delta_{f}}"', shift right=2, from=1-1, to=1-2]
            \arrow[""{name=1, anchor=center, inner sep=0}, "{\Sigma_{f}}"', shift right=2, from=1-2, to=1-1]
            \arrow["\dashv"{anchor=center, rotate=-90}, draw=none, from=1, to=0]
        \end{tikzcd}
    \]
\end{corollary}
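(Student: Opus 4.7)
The plan is to derive this adjunction directly from the fact that $\pi \colon \Idx(\Cat, \SMULT) \to \Cat$ is a bifibration, which is exactly the content of Proposition~\ref{proposition:pi-fibration} together with Proposition~\ref{proposition:pi-opfibration}. It is a classical result that whenever a functor is simultaneously a fibration and an opfibration, each morphism $f$ in the base induces an adjunction $\Sigma_{f} \dashv \Delta_{f}$ between the corresponding fibres, with $\Delta_{f}$ the reindexing functor built from chosen cartesian lifts and $\Sigma_{f}$ the opreindexing functor built from chosen opcartesian lifts. The corollary is precisely the instantiation of this general fact.

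Unpacking the construction, the functor $\Delta_{f} \colon [\LO(\B), \SMULT]_{\lax} \to [\LO(\A), \SMULT]_{\lax}$ acts on objects by pre-composition, $\Delta_{f}(F) = F \circ \LO(f)$, and on morphisms by whiskering with $\LO(f)$; this follows from the explicit description of cartesian lifts given in the proof of Proposition~\ref{proposition:pi-fibration}. The functor $\Sigma_{f}$ assigns to each $(\A, G)$ the codomain of the chosen opcartesian lift of $f$ at $(\A, G)$, which under the equivalence $\Idx(\Cat, \SMULT) \simeq \DiaLens$ is computed by the comprehensive factorisation followed by the pushout construction in the proof of Proposition~\ref{proposition:pi-opfibration}. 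Functoriality of $\Sigma_{f}$ is forced by the universal property of opcartesian morphisms.

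To exhibit the adjunction I would define the unit $\eta_{G} \colon G \to \Delta_{f} \Sigma_{f}(G)$ as the unique vertical morphism obtained by factoring the chosen opcartesian lift of $f$ at $G$ through the chosen cartesian lift of $f$ at $\Sigma_{f}(G)$; the counit $\varepsilon_{F} \colon \Sigma_{f} \Delta_{f}(F) \to F$ is constructed dually. Naturality of $\eta$ and $\varepsilon$, together with the triangle identities, then follow automatically from the uniqueness clauses in the two universal properties. There is no substantive obstacle once the bifibration structure is in hand --- all the genuine content has already been carried out in Proposition~\ref{proposition:pi-fibration} and Proposition~\ref{proposition:pi-opfibration}, and what remains is purely formal diagram-chasing using the orthogonality of the comprehensive factorisation system and the universal property of the pushout.
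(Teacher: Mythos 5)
Your proposal is correct and follows the same route as the paper: both derive the adjunction $\Sigma_{f} \dashv \Delta_{f}$ as the standard consequence of $\pi \colon \Idx(\Cat, \SMULT) \to \Cat$ being a bifibration, with $\Delta_{f}$ given by cartesian (reindexing) lifts and $\Sigma_{f}$ by opcartesian lifts. Your elaboration of the unit, counit, and triangle identities simply spells out the formal argument the paper leaves implicit.
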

\begin{proof}
    The right adjoint $\Delta_{f}$ is given by taking the cartesian lift of the morphism $f \colon \A \to \B$, while $\Sigma_{f}$ is given by taking the opcartesian lift of $f$. 
    Since $\pi \colon \Idx(\Cat, \SMULT) \to \Cat$ is bifibration, it follows immediately that these functors are adjoint. 
\end{proof}

\subsection{Characterising retrofunctors as indexed split multivalued functions}

A retrofunctor is a kind of morphism of categories first introduced under the name \emph{cofunctor} \cite{Aguiar1997}, and shares a close relationship with delta lenses. 

\begin{definition}
\label{definition:retrofunctor}
    Given categories $\A$ and $\B$, a \emph{retrofunctor} $(f, \varphi) \colon \A \lto \B$ consists of a function $f \colon \obj(\A) \to \obj(\B)$ equipped with a lifting operation
    \[
        (a \in \A, u \colon fa \to b \in \B) 
        \quad \longmapsto \quad
        \varphi(a, u) \colon a \to a' \in \A 
    \]
    such that the following axioms hold, where $\cod(-)$ denotes the codomain.
    \begin{enumerate}[label=(R\arabic*)]
    \itemsep=1ex
        \item \quad $f \cod(\varphi(a, u)) = \cod(u)$; 
        \item \quad $\varphi(a, 1_{fa}) = 1_{a}$; 
        \item \quad $\varphi(a, v \circ u) = \varphi(a', v) \circ \varphi(a, u)$. 
    \end{enumerate}
\end{definition}

This is almost identical to Definition~\ref{definition:delta-lens} for a delta lens, with the key difference being that the functor $\A \to \B$ has been replaced with a function $\obj(\A) \to \obj(\B)$ and the first axiom has been weakened accordingly. 

Although retrofunctors are typically understood as morphisms in a category (or a double category), here we will treat them as objects of a category. 

\begin{definition}
\label{definition:category-Ret}
    Let $\RetFun$ denote the category whose objects are retrofunctors, and whose morphisms $(h, k) \colon (f, \varphi) \to (g, \psi)$, as depicted below, consist of a pair of functors such that $kf(a) = gh(a)$ and $h\varphi(a,u) = \psi(ha, ku)$. 
    \begin{equation*}
        \begin{tikzcd}
            \A & \C \\
            \B & \D
            \arrow["h", from=1-1, to=1-2]
            \arrow["{{(f, \varphi)}}"', "\shortmid"{marking}, from=1-1, to=2-1]
            \arrow["{{(g, \psi)}}", "\shortmid"{marking}, from=1-2, to=2-2]
            \arrow["k"', from=2-1, to=2-2]
        \end{tikzcd}
    \end{equation*}
\end{definition}

There is a faithful functor $U \colon \Lens \to \RetFun$ which sends a delta lens $(f, \varphi)$ to its underlying retrofunctor $(f_{0}, \varphi)$ where $f_{0}$ is the underlying object assignment of the functor $f$. 

\begin{proposition}[{\cite[Theorem~9]{Clarke2021}}]
\label{proposition:comonadicity}
    The functor $U \colon \Lens \to \RetFun$ is comonadic. 
\end{proposition}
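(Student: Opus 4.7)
My plan is to apply the dual of Beck's monadicity theorem, which requires establishing three conditions: (i) $U$ admits a right adjoint $R \colon \RetFun \to \Lens$; (ii) $U$ is conservative; and (iii) $U$ creates equalizers of $U$-split parallel pairs.

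Condition (ii) is essentially immediate. A morphism in either $\Lens$ or $\RetFun$ is a pair of functors $(h, k)$ between the underlying categories, subject to object-level or functor-level compatibility conditions, and an isomorphism in either category is precisely a pair of invertible functors. Any morphism of delta lenses whose image in $\RetFun$ is invertible is therefore already invertible in $\Lens$, since $\Lens$ and $\RetFun$ share the same notion of isomorphism at the level of the underlying functors.

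The substantive work is the construction of the right adjoint $R$. Given a retrofunctor $(f, \varphi) \colon \A \lto \B$, axioms (R2) and (R3) ensure that the chosen lifts assemble into a wide subcategory $\Lambda(f, \varphi) \hookrightarrow \A$ and that the assignment $\varphi(a, u) \mapsto u$ defines a discrete opfibration $\Lambda(f, \varphi) \to \B$. The idea is to build from these ingredients a diagrammatic delta lens in the sense of Definition~\ref{definition:DiaLens}: concretely, one assembles a category $\A'$ equipped with an identity-on-objects functor $\Lambda(f, \varphi) \to \A'$ and a discrete opfibration $\A' \to \B$ whose composite coincides with $\Lambda(f, \varphi) \to \B$; then the equivalence $\DiaLens \simeq \Lens$ of Theorem~\ref{theorem:Lens-DiaLens} returns the desired delta lens $R(f, \varphi)$. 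The universal property --- that morphisms $UL \to (f, \varphi)$ in $\RetFun$ correspond bijectively and naturally to morphisms $L \to R(f, \varphi)$ in $\Lens$ --- is then verified by diagram-chase, using that a morphism of retrofunctors already determines its action on chosen lifts while leaving only the freedom of the overarching functor.

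For condition (iii), one observes that both $\Lens$ and $\RetFun$ inherit equalizers from the respective forgetful functors to pairs of functors in $\Cat$, and that for a $U$-split pair the given splitting in $\RetFun$ supplies just enough coherence to transport the delta lens structure uniquely to the equalizer of the underlying functor-pair, with $U$ then preserving this equalizer. The main obstacle is the construction of $R$ and the verification of its universal property, which is delicate precisely because the cofree delta lens must introduce a genuine functor on \emph{all} morphisms of the underlying category while remaining faithful to the originally chosen lifts. Once this is in hand, Beck's theorem delivers the conclusion.
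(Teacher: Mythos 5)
The paper does not actually prove this proposition --- it is imported from \cite[Theorem~9]{Clarke2021} --- but it does record the one essential datum, the right adjoint $R$, immediately after the statement. Your overall strategy (the dual of Beck's theorem) is the standard one, and your conditions (ii) and (iii) are indeed routine: isomorphisms in both $\Lens$ and $\RetFun$ are exactly the morphisms whose two component functors are invertible, so $U$ reflects isomorphisms; and both categories in fact have equalizers of \emph{all} parallel pairs, created from the equalizers of the underlying pairs of functors (if $h_{1}a = h_{2}a$ and $k_{1}u = k_{2}u$ then $h_{1}\varphi(a,u) = \psi(h_{1}a, k_{1}u) = h_{2}\varphi(a,u)$, so the lifting operation restricts to the equalizer subcategories), with $U$ preserving them --- the splitting is never needed.

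The genuine gap is at the point you yourself identify as the crux: the right adjoint is never constructed. ``One assembles a category $\A'$'' equipped with an identity-on-objects functor out of $\Lambda(f,\varphi)$ and a functor to $\B$ is not a construction; these requirements do not determine $\A'$, and nothing in your text says what its non-chosen morphisms are. Moreover, one property you ascribe to it is wrong: you require $\A' \to \B$ to be a discrete opfibration, whereas in a diagrammatic delta lens it is only the \emph{composite} with the identity-on-objects functor that must be a discrete opfibration, and the underlying functor of the cofree delta lens is far from being one (there are many morphisms over a given $u$ out of a given object). The correct construction, which the paper states right after the proposition, is: $R(f,\varphi)$ has domain $\X$ with $\obj(\X) = \obj(\A)$ and $\X(a, a') = \A(a, a') \times \B(fa, fa')$, with $Rf$ given by $f$ on objects and second projection on morphisms, and with chosen lifts $R\varphi(a, u) = (\varphi(a,u), u)$. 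With this in hand the adjunction isomorphism $\RetFun(U(g,\psi), (f,\varphi)) \cong \Lens((g,\psi), R(f,\varphi))$ is immediate: a retrofunctor morphism $(h, k)$ corresponds to the lens morphism $(\tilde{h}, k)$ with $\tilde{h}(w) = (hw, kgw)$, and $\tilde{h}$ is forced by the equations $\pi_{1}\tilde{h} = h$ and $(Rf)\tilde{h} = kg$. Without an explicit $\X$ none of this can be verified, so as written your proposal does not establish condition (i) and hence does not prove comonadicity.
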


The right adjoint $R \colon \RetFun \to \Lens$ sends a retrofunctor $(f, \varphi) \colon \A \to \B$ to the delta lens $(Rf, R\varphi) \colon \X \to \B$ where $\X$ has the same objects as $\A$ and whose morphisms are pairs $(w \colon a \to a' \in \A, u \colon fa \to fa' \in \B) \colon a \to a'$. 
The functor $Rf \colon \X \to \B$ is given by $f$ on objects, and projection in the second component on morphisms. 
Given an object $a \in \X$ and a morphism $u \colon fa \to b$ in $\B$, the chosen lift $R\varphi(a, u)$ is the morphism $(\varphi(a, u), u) \colon a \to a'$ in $\X$. 

\begin{definition}
    A \emph{cofree delta lens} is a delta lens in the image of the right adjoint $R \colon \RetFun \to \Lens$. 
\end{definition}

The relationship between delta lenses and retrofunctors is far richer than the adjunction between their respective categories. 
For example, there is a correspondence between retrofunctors and certain spans of functors \cite{Clarke2020}, leading to an equivalence of categories analogous to $\Lens \simeq \DiaLens$ in Theorem~\ref{theorem:Lens-DiaLens}. 
Similarly, there is a correspondence between retrofunctors and certain globular transformations ``over'' the double functor $K_{\ast} \colon \SQ(\Set) \to \SPAN$, leading to an equivalence of categories analogous to $\Lens \simeq \GlobCone(\Cat, K_{\ast})$ for delta lenses. 
Instead of proving this result in detail, which would take us outside the scope of this paper, we show how each retrofunctor determines a globular transformation of double functors, and provide a characterisation of the cofree delta lenses. 

Given a category $\B$, let $\B_{\infty}$ denote the \emph{codiscrete category} determined by its underlying set of objects, and $j_{\B} \colon \B \to \B_{\infty}$ the canonical identity-on-objects functor. 

\begin{proposition}
    Each retrofunctor $(f, \varphi) \colon \A \lto \B$ determines a globular transformation of lax double functors as follows.
    \begin{equation}
    \label{equation:retrofunctor-to-GlobCone}
        \begin{tikzcd}
            {\LO(\B)} & {\LO(\B_{\infty})} \\
            {\SQ(\Set)} & \SPAN
            \arrow["{\LO(j_{\B})}", from=1-1, to=1-2]
            \arrow[""{name=0, anchor=center, inner sep=0}, "{F_{1}}"', from=1-1, to=2-1]
            \arrow[""{name=1, anchor=center, inner sep=0}, "{F_{2}}", from=1-2, to=2-2]
            \arrow["{K_{\ast}}"', from=2-1, to=2-2]
            \arrow["\Phi", shorten <=30pt, shorten >=30pt, Rightarrow, from=0, to=1]
        \end{tikzcd}
    \end{equation}
\end{proposition}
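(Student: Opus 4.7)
The plan is to construct the two lax double functors $F_1$ and $F_2$ and the globular transformation $\Phi$ directly from the retrofunctor data, then verify the required coherences via the axioms (R1)--(R3). The key observation is that a retrofunctor $(f, \varphi)$ carries enough information to define a functor $\B \to \Set$ on fibres (which gives $F_1$) and a function $\obj(\A) \to \obj(\B)$ extending to a functor into the codiscrete category $\B_\infty$ (which gives $F_2$), and the lifting operation $\varphi$ itself interpolates between them.

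To construct $F_1 \colon \LO(\B) \to \SQ(\Set)$, I invoke Proposition~\ref{proposition:category-of-elements-SQSET}, which reduces such a lax double functor to a functor $\B \to \Set$. I take the functor sending $x \in \B$ to the fibre $F(x) = \{a \in \A \mid fa = x\}$, and sending $u \colon x \to y$ to the function $a \mapsto \cod\varphi(a,u)$; well-definedness uses (R1), and functoriality is immediate from (R2) and (R3). To construct $F_2 \colon \LO(\B_\infty) \to \SPAN$, I invoke Proposition~\ref{proposition:category-of-elements-SPAN}, which reduces such a lax double functor to a functor into $\B_\infty$. Since $\B_\infty$ is codiscrete, the object assignment $f \colon \obj(\A) \to \obj(\B)$ extends uniquely to a functor $\bar{f} \colon \A \to \B_\infty$, and the associated $F_2$ sends each $x \in \B_\infty$ to $f^{-1}(x) = F(x)$ and each morphism $u \colon x \to y$ in $\B_\infty$ to the span whose apex is the set $\{w \colon a \to a' \in \A \mid fa = x, fa' = y\}$ with source and target projections as its legs.

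For the globular transformation $\Phi \colon K_\ast F_1 \Rightarrow F_2 \LO(j_\B)$, note that both composites agree on objects (each sends $x \in \B$ to $F(x)$), so the object components of $\Phi$ can be taken to be identities, ensuring globularity. For a loose morphism $u \colon x \to y$ in $\B$, I define the middle function of the cell $\Phi_u$ by
\[
    \Phi_u \colon F(x) \to F_2(j_\B u), \qquad a \mapsto \varphi(a, u);
\]
this is well-defined by (R1). Commutativity of $\Phi_u$ with source legs is immediate ($\varphi(a,u)$ has source $a$), and commutativity with target legs is precisely the identity $\cod\varphi(a,u) = F_1(u)(a)$.

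The remaining verifications are naturality with respect to cells and coherence with the unit and composition comparisons. Naturality is automatic because $\LO(\B)$ has only identity cells. Coherence with the unit comparison of $F_2 \LO(j_\B)$ at $x$ reduces by (R2) to the identity $\varphi(a, 1_{fa}) = 1_a$. I expect the coherence with composition comparisons to be the main step: unpacking the composition comparison $\mu_{(u,v)}$ of $F_2$ on an element, the condition becomes the equation $\varphi(a, v \circ u) = \varphi(\cod\varphi(a,u), v) \circ \varphi(a, u)$, which is exactly axiom (R3). Since all data is defined explicitly and the axioms of a retrofunctor match the required coherences one-for-one, no further subtlety arises.
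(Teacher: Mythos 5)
Your construction is correct and matches the paper's proof essentially step for step: the same fibre sets $F(x)$, the same function $a \mapsto \cod\varphi(a,u)$ for $F_1$, the same spans of morphism-fibres for $F_2$, and the same component $\Phi_u(a) = \varphi(a,u)$, with the retrofunctor axioms supplying exactly the coherences you identify. The only (harmless, arguably cleaner) difference is that you obtain the lax structures of $F_1$ and $F_2$ by routing through the equivalences of Propositions~\ref{proposition:category-of-elements-SQSET} and~\ref{proposition:category-of-elements-SPAN} applied to the fibre functor and to $\bar{f} \colon \A \to \B_\infty$, where the paper defines these double functors directly and asserts their coherence.
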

\begin{proof}
    Since $\Phi$ is a globular transformation, $F_{1}$ and $F_{2}$ have the same assignment on objects which we denote by $F$. 
    Let $F(x) = \{ a \in \A \mid fa = b \}$ for each object $x \in \B$. 
    Given a morphism $u \colon x \to y$, we define the function $F_{1}(u) \colon F(x) \to F(y)$ by the assignment $a \mapsto \cod(\varphi(a, u))$; the (strict) double functor $F_{1} \colon \LO(\B) \to \SQ(\Set)$ is well-defined by the axioms of retrofunctor.
    Given a pair of objects $(x, y)$, we define the set $F_{2}(x, y) = \{ w \colon a \to a' \in \A \mid fa = x, fa' = y \}$ and a span 
    \[
        \begin{tikzcd}
            {F(x)} & {F(x, y)} & {F(y)}
            \arrow["{s_{x}}"', from=1-2, to=1-1]
            \arrow["{t_{y}}", from=1-2, to=1-3]
        \end{tikzcd}
    \]
    where $s_{x}(w \colon a \to a') = a$ and $t_{y}(w \colon a \to a') = a'$. 
    This extends to a lax double functor $F_{2} \colon \LO(\B_{\infty}) \to \SPAN$, where the unit and multiplication comparison cells determined by the identity and composition of $\A$. 
    Finally, the globular transformation $\Phi$ has the component at a morphism $u \colon x \to y$ in $\B$ given by the diagram
    \[
        \begin{tikzcd}
            {F(x)} & {F(x)} & {F(y)} \\
            {F(x)} & {F(x, y)} & {F(y)}
            \arrow[equals, from=1-1, to=2-1]
            \arrow["{1_{F(x)}}"', from=1-2, to=1-1]
            \arrow["{F_{1}(u)}", from=1-2, to=1-3]
            \arrow["{\Phi_{u}}", from=1-2, to=2-2]
            \arrow[equals, from=1-3, to=2-3]
            \arrow["{s_{x}}"', from=2-2, to=2-1]
            \arrow["{t_{y}}", from=2-2, to=2-3]
        \end{tikzcd}
    \]
    where $\Phi_{u}(a) = \varphi(a, u)$. 
\end{proof}

Given a globular transformation \eqref{equation:retrofunctor-to-GlobCone}, we may easily obtain the globular cone over $K_{\ast}$ given the triple $(\B, F_{1}, F_{2} \circ \LO(j_{B}))$.
This describes the assignment on objects of the functor $\RetFun \to \Lens \simeq \GlobCone(\Cat, K_{\ast})$ and provides a way of characterising the objects in the essential image.

\begin{proposition}
    A globular cone
    \begin{equation*}
    \begin{tikzcd}[column sep = small]
        & \LO(\B) \\
        {\SQ(\Set)} && \SPAN
        \arrow[""{name=0, anchor=center, inner sep=0}, "{F_{1}}"', from=1-2, to=2-1]
        \arrow[""{name=1, anchor=center, inner sep=0}, "{F_{2}}", from=1-2, to=2-3]
        \arrow["K_{\ast}"', from=2-1, to=2-3]
        \arrow["\varphi", shift right=2, shorten <=10pt, shorten >=10pt, Rightarrow, from=0, to=1]
    \end{tikzcd}
    \end{equation*}
    corresponds to a cofree delta lens under the equivalence $\Lens \simeq \GlobCone(\Cat, K_{\ast})$ if and only if $F_{2}$ factors through the double functor $\LO(j_{\B}) \colon \LO(\B) \to \LO(\B_{\infty})$. 
\end{proposition}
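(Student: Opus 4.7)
The plan is to verify both directions under the equivalence $\Lens \simeq \GlobCone(\Cat, K_\ast)$ obtained by composing Theorem~\ref{theorem:Lens-DiaLens} with Theorem~\ref{theorem:DiaLens-GlobCone}, together with the explicit description of the cofree delta lens $R(f, \psi)$ recalled above.

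For the forward direction, suppose the delta lens is $R(f, \psi) \colon \X \to \B$ for a retrofunctor $(f, \psi) \colon \A \lto \B$. By construction, $\X$ has objects $\obj(\A)$ and morphisms $a \to a'$ given by pairs $(w \colon a \to a' \in \A, u \colon fa \to fa' \in \B)$, with $Rf$ projecting onto the second component. Hence the associated fibres are $F(x) = \{a \in \A \mid fa = x\}$, and for each $u \colon x \to y$ the span $F_{2}(u)$ consists of those pairs $(w, u)$; projection onto the first component yields a bijection onto the set $\{w \colon a \to a' \in \A \mid fa = x, fa' = y\}$, which depends only on $(x, y)$ and is natural in $u$. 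Hence $F_{2}$ factors through $\LO(j_{\B})$.

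Conversely, suppose $F_{2} = F_{2}' \circ \LO(j_{\B})$ for some lax double functor $F_{2}' \colon \LO(\B_{\infty}) \to \SPAN$. By Proposition~\ref{proposition:category-of-elements-SPAN} applied to $F_{2}'$, we extract a category $\A$ together with a functor $\A \to \B_{\infty}$, whose object part is a function $f \colon \obj(\A) \to \obj(\B)$. Define a retrofunctor $(f, \psi) \colon \A \lto \B$ by $\psi(a, u) := \sigma_{u}(a)$, regarded as a morphism in $\A$ via the identification $F_{2}(u) = F_{2}'(fa, \cod u)$, where $\sigma_{u}$ is the section read off from the component of $\varphi$ at $u$. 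Axiom (R1) holds by the globular identity $t_{u}\sigma_{u}(a) = F_{1}(u)(a)$, while axioms (R2) and (R3) follow from the coherence of $\varphi$ with the unit and composition comparison cells of $F_{2}$, using that $K_{\ast}F_{1}$ is strict so that these coherences reduce to $\sigma_{1_{x}} = \eta_{x}$ and a corresponding composition equation. Unwinding the category of elements construction then shows that $R(f, \psi)$ recovers the original cone. The main obstacle is this last verification: aligning the lax structure cells of $F_{2}'$ with composition and identities in $\A$, and checking that the round-trip from cone to retrofunctor and back reproduces the original data. This amounts to a careful but routine diagram chase in $\SPAN$, made tractable because $F_{1}$ is strict and $\B_{\infty}$ is codiscrete.
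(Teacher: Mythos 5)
Your proposal is correct and follows the route the paper intends: the paper states this proposition without proof, but the preceding construction of the globular transformation $(F_{1}, F_{2}, \Phi)$ from a retrofunctor and the remark that precomposition with $\LO(j_{\B})$ "provides a way of characterising the objects in the essential image" set up exactly your argument, namely computing the cone of $R(f,\psi)$ directly for the forward direction and reading a retrofunctor off the sections $\sigma_{u}$ for the converse. The only details worth making explicit are that in the forward direction the composition comparison cells of $F_{2}$, and not just its action on loose morphisms, must be seen to depend only on sources and targets (which they do, being composition in $\A$ paired with composition in $\B$), and that the factorisation through $\LO(j_{\B})$ holds only up to the evident isomorphism identifying the fibre of $Rf$ over $u \colon x \to y$ with the set of morphisms $w \colon a \to a'$ satisfying $fa = x$ and $fa' = y$.
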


\subsection{The category of elements as a right adjoint}
In Proposition~\ref{proposition:comonadicity}, we recalled that delta lenses are coalgebras for a comonad on the category of retrofunctors. 
There is also a dual result, which states that delta lenses are algebras for a monad on the category of functors. 

\begin{proposition}[{\cite[Corollary~24]{Clarke2023}}]
\label{proposition:monadicity}
    The functor $U \colon \Lens \to \Cat^{\two}$ is monadic.
\end{proposition}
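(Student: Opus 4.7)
The strategy is to verify the hypotheses of Beck's crude monadicity theorem for $U \colon \Lens \to \Cat^{\two}$: that $U$ has a left adjoint, that $U$ reflects isomorphisms, and that $\Lens$ has, and $U$ preserves, coequalisers of $U$-split parallel pairs.

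For the left adjoint, I would describe the free delta lens on a functor $f \colon \A \to \B$ by freely adjoining, for each pair $(a, u)$ with $a \in \A$ and $u \colon fa \to b$ in~$\B$, a morphism $\varphi(a, u) \colon a \to \tilde{a}$ above $u$ in a new category over~$\B$, subject only to the axioms \ref{DL2} and \ref{DL3}; iterating this construction and passing to the colimit yields the free delta lens. Equivalently, the left adjoint is guaranteed by the algebraic weak factorisation system on $\Cat$ whose right class is $\Lens$, established in \cite{Clarke2024}: the general theory of \cite{BourkeGarner2017} asserts that the R-maps of an AWFS are always monadic over the arrow category of the ambient category.

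Reflection of isomorphisms is an elementary direct computation. If $(h, k) \colon (f, \varphi) \to (g, \psi)$ is a lens morphism with $h$ and $k$ invertible functors, then substituting $a \mapsto h^{-1}a'$ and $u \mapsto k^{-1}u'$ in the equation $h\varphi(a, u) = \psi(ha, ku)$ and applying $h^{-1}$ yields $h^{-1}\psi(a', u') = \varphi(h^{-1}a', k^{-1}u')$, so the inverse pair $(h^{-1}, k^{-1})$ is again a lens morphism.

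For coequalisers of $U$-split pairs, given a parallel pair of lens morphisms whose image under $U$ admits a split coequaliser $q$ in $\Cat^{\two}$, the section supplied by the splitting transfers the lifting operation of the source lens across $q$ to yield a unique lifting operation on the coequaliser functor such that $q$ becomes a morphism of delta lenses; the usual algebra-transfer argument then shows this is the coequaliser in $\Lens$ and is preserved by $U$. The main obstacle is the construction of the left adjoint, since a direct presentation of the free lens monad requires transfinite iteration to enforce closure under composition of chosen lifts; the AWFS perspective of \cite{Clarke2024} is the cleanest way to avoid grinding through this construction by hand.
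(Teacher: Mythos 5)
The paper does not prove this proposition at all: it is recalled verbatim from \cite[Corollary~24]{Clarke2023}, where monadicity is obtained by exhibiting $\Lens$ directly as the category of algebras for an explicitly constructed monad on $\Cat^{\two}$ (the monad of the algebraic weak factorisation system for delta lenses). Your second suggested route --- invoking the AWFS of \cite{Clarke2023, Clarke2024} and the general fact from \cite{BourkeGarner2017} that the right class of an AWFS is by definition the category of algebras for a monad over the arrow category --- is therefore essentially the cited proof, modulo the nontrivial input that $\Lens$ really is that category of algebras. Your first route, via Beck's monadicity theorem, is a genuinely different and viable decomposition: the iso-reflection computation is correct, and the transfer of the lifting operation along a $U$-split coequaliser does work. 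Two cautions on that route. First, verifying axiom \ref{DL3} for the transferred lifting $\chi(a,u) \coloneqq q\,\psi(sa, su)$ needs more than the section $s$ of $q$: one must first show $q\,\psi(y,w) = \chi(qy, qw)$, which uses the \emph{other} splitting $t$, the identity $sq = h_{1}t$, $h_{2}t = 1$, and the fact that both parallel morphisms preserve chosen lifts; your sketch attributes everything to ``the section'' and should make this explicit. Second, two minor inaccuracies: the version of Beck's theorem phrased in terms of $U$-split pairs is the precise monadicity theorem, not the crude one (crude uses reflexive coequalisers); and the claim that a direct presentation of the free delta lens requires transfinite iteration is false --- \cite{Clarke2023} gives a closed-form description of the free delta lens on a functor, and indeed the construction $\mathrm{Fr}_{\B}$ appearing in this paper's proof that $\el \colon \Idx(\Cat, \SMULT) \to \Cat$ has a left adjoint is exactly such an explicit description in the case of an identity functor.
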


The category of elements of an indexed split multivalued function describes a functor $\el \colon \Idx(\Cat, \SMULT) \to \Cat$ which given by following the composite. 
\begin{equation*}
    \begin{tikzcd}
        {\Idx(\Cat, \SMULT)} & \Lens & {\Cat^{\two}} & \Cat
        \arrow["\simeq", from=1-1, to=1-2]
        \arrow["U", from=1-2, to=1-3]
        \arrow["\dom", from=1-3, to=1-4]
    \end{tikzcd}
\end{equation*}
The functor $\dom \colon \Cat^{\two} \to \Cat$ which sends a functor to its domain category has a left adjoint, which sends a category to its identity functor.
The functor $U \colon \Lens \to \Cat^{\two}$ has a left adjoint by Proposition~\ref{proposition:monadicity}. 
Finally, $\el \colon \Idx(\Cat, \SMULT) \to \Lens$ is an (adjoint) equivalence, whose left adjoint is described explicitly in Section~\ref{sec:proof-sketch}. 
Thus, we can characterise the category of elements of an indexed split multivalued function as right adjoint to the functor which constructs the indexed multivalued valued function corresponding to the free delta lens on the identity functor. 

\begin{proposition}
    The functor $\el \colon \Idx(\Cat, \SMULT) \to \Cat$ has a left adjoint. 
\end{proposition}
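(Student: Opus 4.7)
The plan is to take the factorisation of $\el$ already assembled in the paragraph immediately preceding the statement,
\[
    \Idx(\Cat, \SMULT) \xrightarrow{\simeq} \Lens \xrightarrow{U} \Cat^{\two} \xrightarrow{\dom} \Cat,
\]
and exhibit a left adjoint for each of the three factors; the desired left adjoint to $\el$ will then be obtained by composing these in reverse order, using the fact that a composite of right adjoints is a right adjoint.

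Each of the three required left adjoints is already available. First, $\dom \colon \Cat^{\two} \to \Cat$ has as its left adjoint the functor sending a category $\A$ to the identity functor $1_{\A} \colon \A \to \A$, with identity unit and with counit at $f \colon \A \to \B$ given by the morphism $(1_{\A}, f)$ in $\Cat^{\two}$; the triangle identities are immediate. Second, $U \colon \Lens \to \Cat^{\two}$ has a left adjoint by the monadicity result of Proposition~\ref{proposition:monadicity}, namely the functor that builds the free delta lens on a given functor. Third, the equivalence $\el \colon \Idx(\Cat, \SMULT) \xrightarrow{\simeq} \Lens$ of Theorem~\ref{theorem:main} may be promoted to an adjoint equivalence, whose left adjoint is exactly the explicit ``category of elements'' construction sketched in Section~\ref{sec:proof-sketch}, sending a delta lens $(f, \varphi) \colon \A \to \B$ to the indexed split multivalued function on $\B$ whose fibres are the preimages of $f$ on objects and morphisms.

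There is no genuine obstacle in this proof; it is simply the slogan ``right adjoints compose'', applied to pieces of machinery already in place. The most I would include by way of commentary is to unwind the resulting composite left adjoint $L \colon \Cat \to \Idx(\Cat, \SMULT)$: it sends a category $\A$ to the indexed split multivalued function corresponding to the free delta lens on $1_{\A} \colon \A \to \A$, which matches the characterisation announced in the preceding paragraph.
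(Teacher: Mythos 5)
Your argument is correct: each of the three factors $\dom$, $U$, and the equivalence $\el \colon \Idx(\Cat,\SMULT) \to \Lens$ has a left adjoint (the last after promotion to an adjoint equivalence), so the composite does too. This is in fact exactly the reasoning the paper lays out in the paragraph immediately \emph{preceding} the proposition, so you are not so much taking a different route as stopping where the paper's informal discussion stops. The paper's proof environment does something more: it writes down the left adjoint $\mathrm{Fr} \colon \Cat \to \Idx(\Cat,\SMULT)$ explicitly, giving the formula $\mathrm{Fr}_{\B}(x) = \sum_{b\in\B}\B(b,x)$ on objects together with an explicit (and rather intricate) description of the sets $\mathrm{Fr}_{\B}(u)$ and the structure maps $s_u$, $t_u$, $\sigma_u$ of the resulting split multivalued functions. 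Your abstract argument suffices to establish the bare existence claim in the statement, and is shorter; what it does not deliver is the concrete description of $L\A$, which you only gesture at in your final paragraph (``the indexed split multivalued function corresponding to the free delta lens on $1_{\A}$''). Unwinding that description is precisely the content of the paper's proof, and it depends on knowing the free delta lens on a functor explicitly (via the algebraic weak factorisation system of \cite{Clarke2023,Clarke2024}), which your write-up treats as a black box supplied by Proposition~\ref{proposition:monadicity}. So: correct, and acceptable for the statement as literally phrased, but strictly less informative than the proof in the paper.
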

\begin{proof}
    We may define the left adjoint $\mathrm{Fr} \colon \Cat \to \Idx(\Cat, \SMULT)$ explicitly. 
    Given a category $\B$, the lax double functor $\mathrm{Fr}_{\B} \colon \LO(\B) \to \SMULT$ is given as follows.
    For each object $x \in \B$, we define the set $\mathrm{Fr}_{\B}(x) = \sum_{b \in \B} \B(b, x)$ and for each morphism $u \colon x \to y$ in $\B$ we define the set 
    \[
        \mathrm{Fr}_{\B}(u) = \mathrm{Fr}_{\B}(x) + \{ (b_{1} \xrightarrow{\alpha} x, x \xrightarrow{\beta} b_{2}, b_{2} \xrightarrow{\gamma} b_{3}, b_{3} \xrightarrow{\delta} y) \mid \beta \alpha = 1_{b_{1}}, \delta \gamma \beta = u, \gamma \neq 1 \}.
    \]
    To construct the split multivalued function 
    \[
        \begin{tikzcd}
            {\mathrm{Fr}_{\B}(x)} & {\mathrm{Fr}_{\B}(u)} & {\mathrm{Fr}_{\B}(y)}
            \arrow["{\sigma_{u}}", shift left=2, tail, from=1-1, to=1-2]
            \arrow["{s_{u}}", shift left=2, two heads, from=1-2, to=1-1]
            \arrow["{t_{u}}", from=1-2, to=1-3]
        \end{tikzcd}
    \]
    we define the following functions. 
    \begin{align*}
        s_{u}(\alpha \colon b \to x) &= \alpha \quad \text{ and } \quad s_{u}(\alpha, \beta, \gamma, \delta) = \alpha \\
        t_{u}(\alpha \colon b \to x) &= u \circ \alpha \quad \text { and } \quad s_{u}(\alpha, \beta, \gamma, \delta) = \delta \\
        \sigma_{u}(\alpha \colon b \to x) &= \alpha
    \end{align*}
    Defining the function $\mu_{(u, v)} \colon F(u, v) \to F(v \circ u)$ corresponding to the multiplication comparison of the lax double functor is fairly complex, and we refer the reader to \cite[Construction~3.16]{Clarke2024} where they may infer the details.
\end{proof}

\subsection{Monoidal structures on indexed split multivalued functions}

The double category $\SMULT$ admits a cartesian and a cocartesian monoidal structure, since $\SQ(\Set)$ and $\SPAN$ have products and coproducts and the double functor $K_{\ast}$ preserves them.  
These induce corresponding monoidal structures on category $\Idx(\Cat, \SMULT)$ and the fibres $[\LO(\B), \SMULT]_{\lax}$ of the forgetful functor $\pi \colon \Idx(\Cat, \SMULT) \to \Cat$. 

\begin{proposition}
    The category $\Idx(\Cat, \SMULT)$ has products and coproducts.  
\end{proposition}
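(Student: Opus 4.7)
The plan is to invoke the equivalence $\el \colon \Idx(\Cat, \SMULT) \simeq \Lens$ of Theorem~\ref{theorem:main} and prove the dual statement that $\Lens$ has binary products and binary coproducts. Since equivalences of categories preserve and reflect all limits and colimits, this suffices.

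For the product of two delta lenses $(f, \varphi) \colon \A \to \B$ and $(g, \psi) \colon \C \to \D$, I take the underlying functor $f \times g \colon \A \times \C \to \B \times \D$ and equip it with the componentwise lifting operation
\[
    (\varphi \times \psi)\bigl((a, c), (u, v)\bigr) = \bigl(\varphi(a, u),\, \psi(c, v)\bigr).
\]
Each of the axioms \ref{DL1}--\ref{DL3} is then inherited componentwise from those for $(f, \varphi)$ and $(g, \psi)$. The two projection functors constitute a cone in $\Lens$, and given any pair of morphisms $(f, \varphi) \leftarrow (h, \chi) \to (g, \psi)$ the universal property of $\A \times \C$ and $\B \times \D$ in $\Cat$ produces a unique pair of functors into $\A \times \C$ and $\B \times \D$; the resulting pair is automatically a morphism of delta lenses, since the chosen-lift preservation condition $h \chi = (\varphi \times \psi) \circ \cdots$ splits into its two components. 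For coproducts, the dual construction gives $(f + g, \varphi + \psi) \colon \A + \C \to \B + \D$ with lifting defined casewise; since every morphism of $\A + \C$ and every morphism of $\B + \D$ lies entirely in one summand, the axioms and the universal property reduce trivially to those of the summands.

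The main obstacle, such as it is, is simply the bookkeeping involved in checking that the (co)projections and induced (co)pairings satisfy the chosen-lift preservation condition; both the delta lens axioms and the delta lens morphism condition are manifestly preserved by products and coproducts in $\Cat^{\two}$, so in effect the forgetful functor $\Lens \to \Cat^{\two}$ creates both binary products and binary coproducts.

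As suggested by the preceding paragraph, an alternative route more in the spirit of the paper would proceed double-categorically: by Theorem~\ref{theorem:1d-universal-property-SMULT}, the cartesian and cocartesian monoidal structures on $\SQ(\Set)$ and $\SPAN$ lift through the globular cone to cartesian and cocartesian monoidal structures on $\SMULT$, since $K_{\ast}$ preserves both products and coproducts of functions. Combined with products and coproducts in $\Cat$ and the fact that $\LO \colon \Cat \to \Dbl$ preserves each (so that $\LO(\B \times \D) \cong \LO(\B) \times \LO(\D)$ and analogously for coproducts), one then defines the product of $(\B, F)$ and $(\D, G)$ as the composite lax double functor $\LO(\B \times \D) \cong \LO(\B) \times \LO(\D) \xrightarrow{F \times G} \SMULT \times \SMULT \to \SMULT$, and dually for coproducts. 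The technical point in this second approach is verifying that the unit and composition comparison cells assemble coherently under the combined monoidal product, but via the equivalence with $\Lens$ this work is ultimately identical to the componentwise check sketched above.
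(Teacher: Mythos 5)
Your argument is correct, but your primary route differs from the paper's. The paper works directly on the indexed side: it defines $F \times G \colon \LO(\B \times \D) \to \SMULT$ by taking componentwise products of the value sets and of the split multivalued functions $(s_u, F(u), t_u, \sigma_u)$, i.e.\ it exploits the cartesian (resp.\ cocartesian) monoidal structure on $\SMULT$ inherited from $\SQ(\Set)$ and $\SPAN$ via $K_{\ast}$ --- essentially the ``alternative route'' you sketch in your final paragraph. You instead transport the problem across the equivalence $\Idx(\Cat, \SMULT) \simeq \Lens$ of Theorem~\ref{theorem:main} and verify that $\Lens$ has binary (co)products created by the forgetful functor $\Lens \to \Cat^{\two}$, with the lifting operation defined componentwise (for products) and casewise (for coproducts, using that a morphism of $\B + \D$ lives entirely in one summand). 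Both arguments are sound and the checks are genuinely the same under the equivalence; your version is more elementary and makes the universal properties transparent, since uniqueness of the induced morphisms follows from faithfulness of $\Lens \to \Cat^{\two}$, while the paper's version keeps the construction explicit on the $\SMULT$ side, which is what the surrounding discussion of monoidal structures on $\Idx(\Cat, \SMULT)$ and on the fibres $[\LO(\B), \SMULT]_{\lax}$ actually needs. Note that both you and the paper only treat binary (co)products, which is all the proof in the paper supplies for the stated proposition.
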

\begin{proof}
    Given lax double functors $F \colon \LO(\B) \to \SMULT$ and $G \colon \LO(\D) \to \SMULT$, their product is given by the lax double functor $F \times G \colon \LO(\B \times \D) \to \SMULT$ where 
    $(F \times G)(x, x') = F(x) \times G(x')$ for each object $(x, x') \in \B \times \D$, 
    and $(F \times G)(u, u')$ is the split multivalued function
    \begin{equation*}
        \begin{tikzcd}
            {F(x)\times G(x') } & {F(u)\times G(u')} & {F(y)\ \times G(y')}
            \arrow["{\sigma_{u} \times \sigma_{u'}}", shift left=2, tail, from=1-1, to=1-2]
            \arrow["{s_{u} \times s_{u'}}", shift left=2, two heads, from=1-2, to=1-1]
            \arrow["{t_{u} \times t_{u'}}", from=1-2, to=1-3]
        \end{tikzcd}
    \end{equation*}
    for each morphism $(u \colon x \to y, u' \colon x' \to y')$ in $\B \times \D$. 
    Dually, the coproduct $F + G$ is given by the component-wise coproduct on $\LO(\B + \D)$.  
\end{proof}

\begin{proposition}
    The category $[\LO(\B), \SMULT]_{\lax}$ has products and coproducts. 
\end{proposition}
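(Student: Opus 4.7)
The plan is to construct both products and coproducts pointwise, mirroring the construction from the previous proposition while keeping the indexing category fixed. Given lax double functors $F, G \colon \LO(\B) \to \SMULT$, I would define $(F \times_{\B} G)(x) = F(x) \times G(x)$ on objects, and on each morphism $u \colon x \to y$ in $\B$ I would take the pointwise product of the loose images, namely
\[
    \begin{tikzcd}
        F(x) \times G(x) & F(u) \times G(u) & F(y) \times G(y)
        \arrow["{\sigma_{u} \times \sigma_{u}'}", shift left=2, tail, from=1-1, to=1-2]
        \arrow["{s_{u} \times s_{u}'}", shift left=2, two heads, from=1-2, to=1-1]
        \arrow["{t_{u} \times t_{u}'}", from=1-2, to=1-3]
    \end{tikzcd}
\]
with unit and composition comparison cells taken as the products of those of $F$ and $G$. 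The fibrewise coproduct $F +_{\B} G$ would be defined dually using coproducts in $\Set$.

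The key step is verifying that this data assembles into a bona fide lax double functor, which reduces to compatibility of products (respectively coproducts) in $\Set$ with the pullbacks used to define loose composition in $\SMULT$. For products this is immediate, since products commute with pullbacks in $\Set$, so the composition comparison of $F \times_{\B} G$ may be obtained as the product of the composition comparisons of $F$ and $G$ after identifying $(F(u) \times G(u)) \times_{F(y) \times G(y)} (F(v) \times G(v))$ with $F(u,v) \times G(u,v)$. For coproducts, the analogous step requires that pullbacks in $\Set$ distribute over coproducts, which follows from the extensivity of $\Set$. The universal properties in $[\LO(\B), \SMULT]_{\lax}$ then reduce componentwise to the usual universal properties of products and coproducts of sets, together with the fact that a tight transformation in the fibre is determined by its components.

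An alternative, more conceptual route is to invoke the bifibration structure on $\pi \colon \Idx(\Cat, \SMULT) \to \Cat$ from Proposition~\ref{proposition:pi-fibration} and Proposition~\ref{proposition:pi-opfibration}: one would pull back the product $F \times G$ of the previous proposition along the diagonal $\Delta \colon \B \to \B \times \B$ to obtain the fibrewise product, and dually push forward $F + G$ along the codiagonal $\nabla \colon \B + \B \to \B$ to obtain the fibrewise coproduct. The main obstacle with this route would be reconciling the somewhat indirect description of the opcartesian lift, which proceeds through the comprehensive factorisation and a pushout of diagrammatic delta lenses, with the concrete pointwise coproduct description. The pointwise approach above sidesteps this complication and is likely the cleaner presentation.
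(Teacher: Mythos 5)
Your proposal is correct and takes essentially the same approach as the paper: the product and coproduct are constructed componentwise on objects and morphisms of $\B$, exactly as in the paper's proof. Your added observations (that products commute with pullbacks and that extensivity of $\Set$ handles the coproduct case) are accurate justifications of details the paper leaves implicit.
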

\begin{proof}
    Given lax double functors $F \colon \LO(\B) \to \SMULT$ and $G \colon \LO(\B) \to \SMULT$, their product is given by the lax double functor $F \times_{\B} G \colon \LO(\B) \to \SMULT$ where 
    $(F \times_{\B} G)(x) = F(x) \times G(x)$ for each object $x \in \B$, 
    and $(F \times_{\B} G)(u)$ is the split multivalued function
    \[
    \begin{tikzcd}
        {F(x)\times G(x) } & {F(u)\times G(u)} & {F(y)\ \times G(y)}
        \arrow["{\sigma_{u} \times \sigma_{u}}", shift left=2, tail, from=1-1, to=1-2]
        \arrow["{s_{u} \times s_{u}}", shift left=2, two heads, from=1-2, to=1-1]
        \arrow["{t_{u} \times t_{u}}", from=1-2, to=1-3]
    \end{tikzcd}
    \]
    for each morphism $u \colon x \to y$ in $\B$. 
    Dually, the coproduct $F +_{\B} G$ is given by the component-wise coproduct. 
\end{proof}

The product in $[\LO(\B), \SMULT]_{\lax}$ corresponds to the fibre product of delta lenses which has been used in the study of symmetric delta lenses \cite[Proposition~3.4]{Clarke2021b}.

\subsection{Sub-double categories and classes of delta lenses}

We can capture many natural classes of delta lenses under the equivalence $\Lens \simeq \Idx(\Cat, \SMULT)$ by restricting the lax double functors $\LO(\B) \to \SMULT$ which factor through some (fully faithful) double functor $\XX \to \SMULT$.

We have already seen examples of this kind in Section~\ref{sec:Adjunctions}. 
An indexed split multivalued function $\LO(\B) \to \SMULT$ corresponds to: 
\begin{itemize}
    \item a discrete opfibration if it factors through $\langle 1, K_{\ast} \rangle \colon \SQ(\Set) \to \SMULT$ defined in Proposition~\ref{proposition:SQSET-SMULT-adjunction};
    \item a fully faithful delta lens if it factors through $\SQ(\Set) \to \SMULT$ defined in Proposition~\ref{proposition:reflective-adjunction-SQSET-SMULT};
    \item an injective-on-objects delta lens if it factors through $\SQ(2) \to \SMULT$ defined in Corollary~\ref{corollary:SQ2-SQSET-SMULT}.
\end{itemize}

Let $\mathrm{\mathbb{P}Set} \to \SMULT$ denote the full sub-double category with a single object given by the singleton set. 
Loose morphisms in $\mathrm{\mathbb{P}Set}$ are \emph{pointed sets} and their composition given by the product of pointed sets. 

Let $\SMULT_{\neq \emptyset} \to \SMULT$ denote the full sub-double category whose objects are \emph{non-empty sets}. 

Let $\mathrm{\mathbb{R}elSMult} \to \SMULT$ denote the full sub-double category with the same objects and whose loose morphisms are given by split multivalued functions whose underlying span is a \emph{relation}. 

Let $\mathrm{\mathbb{S}Mono} \to \SMULT$ denote the full sub-double category with the same objects and whose loose morphisms are given by \emph{split monomorphisms}; that is, split multivalued functions of the following form. 
\begin{equation*}
    \begin{tikzcd}
        A & B & B
        \arrow["\sigma", shift left=2, tail, from=1-1, to=1-2]
        \arrow["s", shift left=2, two heads, from=1-2, to=1-1]
        \arrow["{1_{B}}", from=1-2, to=1-3]
    \end{tikzcd}
    \qquad s \circ \sigma = 1_{A}
\end{equation*}

\begin{proposition}
    A lax double functor $\LO(\B) \to \SMULT$ corresponds to: 
    \begin{itemize}
        \item a bijective-on-objects delta lens if it factors through $\mathrm{\mathbb{P}Set} \to \SMULT$;
        \item a surjective-on-objects delta lens if it factors through $\SMULT_{\neq \emptyset} \to \SMULT$;
        \item a faithful delta lens if it factors though $\mathrm{\mathbb{R}elSMult} \to \SMULT$;
        \item a delta lens whose underlying functor is a \emph{discrete fibration} if it factors through $\mathrm{\mathbb{S}Mono} \to \SMULT$.
    \end{itemize}    
\end{proposition}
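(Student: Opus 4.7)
The plan is to work directly with the explicit description of the Grothendieck construction sketched in Subsection~\ref{sec:proof-sketch}, translating in each of the four cases the condition ``$F$ factors through the sub-double category'' into an elementary condition on the data $(F(x), F(u), s_u, t_u, \sigma_u)$, and then reading off the corresponding property of the delta lens $\pi \colon \el(\B, F) \to \B$. I would first recall the dictionary: $(x, a) \in \el(\B, F)$ means $a \in F(x)$, and morphisms $(x, a) \to (y, a')$ lying over $u \colon x \to y$ are in bijection with elements $\alpha \in F(u)$ satisfying $s_u(\alpha) = a$ and $t_u(\alpha) = a'$. With this dictionary, each equivalence reduces to an inspection.

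For the first two items, where the conditions live purely at the level of objects, the arguments are immediate: factoring through $\mathrm{\mathbb{P}Set}$ forces each $F(x)$ to be a singleton, hence the object assignment of $\pi$ to be a bijection, while factoring through $\SMULT_{\neq\emptyset}$ forces each $F(x)$ to be non-empty, hence the object assignment of $\pi$ to be surjective. For the third item, factoring through $\mathrm{\mathbb{R}elSMult}$ forces $(s_u, t_u) \colon F(u) \to F(x) \times F(y)$ to be injective for every $u$, which by the dictionary above says that there is at most one morphism in $\el(\B, F)$ over $u$ with prescribed source and target---precisely the condition that $\pi$ is faithful. For the fourth item, factoring through $\mathrm{\mathbb{S}Mono}$ forces $t_u \colon F(u) \to F(y)$ to be an identity, so that for each $(y, a')$ and each $u \colon x \to y$ there is a unique $\alpha \in F(u)$ with $t_u(\alpha) = a'$---precisely the discrete fibration condition on $\pi$.

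I expect the main obstacle to be confirming that ``factors through'' is well-defined in each case, i.e.\ that each named class of loose morphisms assembles into an honest sub-double category of $\SMULT$. Stability under the pullback-based loose composition is transparent for $\mathrm{\mathbb{P}Set}$, $\SMULT_{\neq\emptyset}$, and $\mathrm{\mathbb{S}Mono}$ (pullbacks preserve singletons, non-empty sets, and identity target legs). For $\mathrm{\mathbb{R}elSMult}$ the situation is more delicate, since the pullback of two relation-spans need not be a relation; the intended reading is that the composition comparison $\mu_{(u,v)} \colon F(u) \otimes F(v) \to F(v \circ u)$ quotients the pullback down to the relation $F(v \circ u)$, reflecting the fact that in a faithful functor distinct composable pairs $(\alpha, \beta)$ may compose to the same morphism. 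Once this subtlety is addressed, the four verifications themselves are entirely routine, each amounting to unfolding a definition and applying the dictionary above.
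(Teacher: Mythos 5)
Your proposal is correct and is essentially the intended argument: the paper omits the proof precisely because each item reduces, via the dictionary between elements of $F(u)$ and morphisms of $\el(\B, F)$ over $u$, to the routine unfolding you carry out. Your flag about $\mathrm{\mathbb{R}elSMult}$ is a genuine and worthwhile observation rather than a defect of your proof: jointly monic spans are indeed not closed under the pullback composition of $\SMULT$, so $\mathrm{\mathbb{R}elSMult}$ must be given the usual relational (image) composition, making the inclusion $\mathrm{\mathbb{R}elSMult} \to \SMULT$ a \emph{lax} double functor whose comparison is the quotient of the pullback onto the image; since lax double functors compose, ``factors through'' still makes sense, and your reading of the composition comparison $\mu_{(u,v)}$ as this quotient is exactly what happens for the fibre presentation of a faithful delta lens.
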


\section{Acknowledgements}
This work is based on Chapter~4 of the author's PhD thesis \cite{Clarke2022}. 
The author first spoke about this work at the Australian Category Seminar in September 2020, and later at the Calgary Peripatetic Seminar in March 2021 and the Applied Category Theory conference in June 2024. 
Thank you to everyone who gave feedback on this research during its development.


\begin{thebibliography}{99}

\bibitem{AbouSalehCheneyGibbonsMcKinnaStevens2018}
    Faris Abou-Saleh, James Cheney, Jeremy Gibbons, James McKinna, and Perdita Stevens.
    \href{https://doi.org/10.1007/978-3-319-79108-1_1}
    {Introduction to bidirectional transformations}.
    \textit{Lecture Notes in Computer Science},
    Vol.~9715
    (2018).
 
\bibitem{Aguiar1997}
    Marcelo Aguiar.
    \href{https://pi.math.cornell.edu/~maguiar/thesis2.pdf}
    {Internal categories and quantum groups}.
    PhD thesis, Cornell University
    (1997).

\bibitem{AhmanUustalu2016}
    Danel Ahman and Tarmo Uustalu. 
    \href{https://doi.org/10.4204/EPTCS.207.5}
    {Directed containers as categories}.
    \textit{Electronic Proceedings in Theoretical Computer Science},
    Vol.~207
    (2016).

\bibitem{AhmanUustalu2017}
    Danel Ahman and Tarmo Uustalu. 
    \href{https://ceur-ws.org/Vol-1827/#paper11}
    {Taking Updates Seriously}.
    \textit{CEUR Workshop Proceedings},
    Vol.~1827
    (2017).

\bibitem{AhrensLumsdaine2019}
    Benedikt Ahrens and Peter LeFanu Lumsdaine.
    \href{https://doi.org/10.23638/LMCS-15(1:20)2019}
    {Displayed Categories}.
    \textit{Logical Methods in Computer Science},
    Vol.~15
    (2019).

\bibitem{BourkeGarner2017}
    John Bourke and Richard Garner.
    \href{https://doi.org/10.1016/j.jpaa.2015.06.002}
    {Algebraic weak factorisation systems I: Accessible AWFS}.
    \textit{Journal of Pure and Applied Algebra},
    Vol.~220
    (2016).

\bibitem{Buckley2014}
    Mitchell Buckley.
    \href{https://doi.org/10.1016/j.jpaa.2013.11.002}
    {Fibred 2-categories and bicategories}.
    \textit{Journal of Pure and Applied Algebra},
    Vol.~218
    (2014).

\bibitem{CholletClarkeJohnsonSongaWangZardini2022}
    Emma Chollet, Bryce Clarke, Michael Johnson, Maurine Songa, Vincent Wang, and Gioele Zardini.
    \href{https://doi.org/10.4204/EPTCS.372.12}
    {Limits and colimits in a category of lenses}.
    \textit{Electronic Proceedings in Theoretical Computer Science},
    Vol.~372
    (2022).

\bibitem{Clarke2020}
    Bryce Clarke. 
    \href{https://doi.org/10.4204/EPTCS.323.13}
    {Internal lenses as functors and cofunctors}.
    \textit{Electronic Proceedings in Theoretical Computer Science},
    Vol.~323
    (2020).

\bibitem{Clarke2020b}
    Bryce Clarke. 
    \href{http://www.tac.mta.ca/tac/volumes/35/44/35-44abs.html}
    {Internal split opfibrations and cofunctors}.
    \textit{Theory and Applications of Categories},
    Vol.~35
    (2020).

\bibitem{Clarke2021b}
    Bryce Clarke. 
    \href{http://dx.doi.org/10.4204/EPTCS.333.6}
    {A diagrammatic approach to symmetric lenses}.
    \textit{Electronic Proceedings in Theoretical Computer Science},
    Vol.~333
    (2021).

\bibitem{Clarke2021}
    Bryce Clarke.
    \href{https://ceur-ws.org/Vol-2999/#bxpaper2}
    {Delta lenses as coalgebras for a comonad}.
    \textit{CEUR Workshop Proceedings},
    Vol.~2999
    (2021).

\bibitem{Clarke2022}
    Bryce Clarke.
    \href{https://doi.org/10.25949/22045073.v1}
    {The double category of lenses}.
    PhD thesis, Macquarie University
    (2022).

\bibitem{Clarke2023}
    Bryce Clarke. 
    \href{https://doi.org/10.4204/EPTCS.397.4}
    {The algebraic weak factorisation system for delta lenses}.
    \textit{Electronic Proceedings in Theoretical Computer Science},
    Vol.~397
    (2023).

\bibitem{Clarke2024}
    Bryce Clarke.
    \href{https://arxiv.org/abs/2401.17250}
    {Lifting twisted coreflections against delta lenses}.
    \textit{Theory and Applications of Categories},
    Vol.~41
    (2024). 

\bibitem{CruttwellLambertPronkSzyld2022}
    G. S. H. Cruttwell, M. J. Lambert, D. A. Pronk, and M. Szyld.
    \href{http://www.tac.mta.ca/tac/volumes/38/35/38-35abs.html}
    {Double fibrations}.
    \textit{Theory and Applications of Categories},
    Vol.~38
    (2022).

\bibitem{CruttwellShulman2010}
    G. S. H. Cruttwell and Michael A. Shulman.
    \href{http://www.tac.mta.ca/tac/volumes/24/21/24-21abs.html}
    {A unified framework for generalized multicategories}.
    \textit{Theory and Applications of Categories},
    Vol.~24
    (2010).

\bibitem{CzarneckiFosterHuLammelSchurrTerwilliger2009}
    Krzysztof Czarnecki, J. Nathan Foster, Zhenjiang Hu, Ralf Lämmel, Andy Schürr, and James F. Terwilliger.
    \href{https://doi.org/10.1007/978-3-642-02408-5_19}
    {Bidirectional transformations: A cross-discipline perspective}.
    \textit{Lecture Notes in Computer Science},
    Vol.~5563
    (2009).

\bibitem{DawsonParePronk2010}
    Robert Dawson, Robert Paré, and Dorette Pronk.
    \href{http://www.tac.mta.ca/tac/volumes/24/13/24-13abs.html}
    {The span construction}.
    \textit{Theory and Applications of Categories},
    Vol.~24,
    (2010).

\bibitem{DiMeglio2022}
    Matthew Di Meglio.
    \href{https://doi.org/10.4204/EPTCS.372.11}
    {Coequalisers under the lens}.
    \textit{Electronic Proceedings in Theoretical Computer Science},
    Vol.~372
    (2022).

\bibitem{DiMeglio2023}
    Matthew Di Meglio.
    \href{https://doi.org/10.4204/EPTCS.380.23}
    {Universal properties of lens proxy pullbacks}.
    \textit{Electronic Proceedings in Theoretical Computer Science},
    Vol.~380
    (2023).

\bibitem{DiskinXiongCzarnecki2011}
    Zinovy Diskin, Yingfei Xiong, and Krzysztof Czarnecki.
    \href{http://dx.doi.org/10.5381/jot.2011.10.1.a6}
    {From state- to delta-based bidirectional model transformations: The asymmetric case}.
    \textit{Journal of Object Technology},
    Vol.~10
    (2011).

\bibitem{Ehresmann1963}
    Charles Ehresmann.
    \href{https://eudml.org/doc/81794}
    {Catégories structurées}.
    \textit{Annales scientifiques de l'École Normale Supérieure},
    Vol.~80
    (1963).

\bibitem{EmmeneggerMesitiRosoliniStreicher2024}
    Jacopo Emmenegger, Luca Mesiti, Giuseppe Rosolini, and Thomas Streicher. 
    \href{http://www.tac.mta.ca/tac/volumes/40/13/40-13abs.html}
    {A comonad for Grothendieck fibrations}.
    \textit{Theory and Applications of Categories},
    Vol.~40
    (2024).

\bibitem{FosterGreenwaldMoorePierceSchmitt2007}
    J. Nathan Foster, Michael B. Greenwald, Jonathan T. Moore, Benjamin C. Pierce, and Alan Schmitt.
    \href{https://doi.org/10.1145/1232420.1232424}
    {Combinators for bidirectional tree transformations: A linguistic approach to the view-update problem}.
    \textit{ACM Transactions on Programming Languages and Systems},
    Vol.~29
    (2007).

\bibitem{GibbonsJohnson2012}
    Jeremy Gibbons and Michael Johnson.
    \href{https://doi.org/10.14279/tuj.eceasst.49.726}
    {Relating algebraic and coalgebraic descriptions of lenses}.
    \textit{Electronic Communications of the EASST},
    Vol.~49
    (2012).

\bibitem{Grandis2019}
    Marco Grandis. 
    \href{https://doi.org/10.1142/11406}
    {Higher Dimensional Categories: From Double to Multiple Categories}.
    \textit{World Scientific}
    (2019). 

\bibitem{GrandisPare1999}
    Marco Grandis and Robert Paré.
    \href{http://www.numdam.org/item/CTGDC_1999__40_3_162_0/}
    {Limits in double categories}.
    \textit{Cahiers de Topologie et Géométrie Différentielle Catégoriques},
    Vol.~40
    (1999).

\bibitem{GrandisPare2004}
    Marco Grandis and Robert Paré.
    \href{http://www.numdam.org/item/CTGDC_2004__45_3_193_0/}
    {Adjoint for double categories}.
    \textit{Cahiers de Topologie et Géométrie Différentielle Catégoriques},
    Vol.~45
    (2004).

\bibitem{GrandisPare2007}
    Marco Grandis and Robert Paré. 
    \href{http://www.numdam.org/item/?id=CTGDC_2007__48_3_163_0}
    {Lax Kan extensions for double categories}.
    \textit{Cahiers de Topologie et Géométrie Différentielle Catégoriques},
    Vol.~48
    (2007).

\bibitem{GrandisTholen2006}
    Marco Grandis and Walter Tholen.
    \href{http://eudml.org/doc/249802}
    {Natural weak factorization systems}.
    \textit{Archivum Mathematicum},
    Vol.~42
    (2006)

\bibitem{GrothendieckRaynaud1971}
    Alexander Grothendieck and Michèle Raynaud.
    \href{https://doi.org/10.1007/BFb0058656}
    {Revêtements Étales et Groupe Fondamental:
    Séminaire de Géométrie Algébrique du Bois Marie 1960/61 (SGA 1)}.
    \textit{Lecture Notes in Mathematics},
    Vol.~224
    (1971).

\bibitem{JohnsonRosebrugh2013}
    Michael Johnson and Robert Rosebrugh.
    \href{http://dx.doi.org/10.14279/tuj.eceasst.57.875}
    {Delta lenses and opfibrations}.
    \textit{Electronic Communications of the EASST},
    Vol.~57
    (2013).

\bibitem{JohnsonRosebrugh2015}
    Michael Johnson and Robert Rosebrugh.
    \href{https://ceur-ws.org/Vol-1396/#paper1}
    {Spans of delta lenses}.
    \textit{CEUR Workshop Proceedings},
    Vol.~1396
    (2015).

\bibitem{JohnsonRosebrugh2016}
    Michael Johnson and Robert Rosebrugh.
    \href{https://ceur-ws.org/Vol-1571/#paper1}
    {Unifying set-based, delta-based and edit-based lenses}.
    \textit{CEUR Workshop Proceedings},
    Vol.~1571
    (2016).

\bibitem{JohnsonRosebrughWood2010}
    Michael Johnson, Robert Rosebrugh, and Richard Wood.
    \href{https://doi.org/10.3217/jucs-016-05-0729}
    {Algebras and update strategies}.
    \textit{Journal of Universal Computer Science},
    Vol.~16
    (2010).

\bibitem{Lack2010}
    Stephen Lack. 
    \href{https://doi.org/10.1007/s10485-008-9136-5}
    {Icons}.
    \textit{Applied Categorical Structures},
    Vol.~18
    (2010).

\bibitem{LackShulman2012}
    Stephen Lack and Michael Shulman. 
    \href{https://doi.org/10.1016/j.aim.2011.08.014}
    {Enhanced 2-categories and limits for lax morphisms}.
    \textit{Advances in Mathematics},
    Vol.~229
    (2012).

\bibitem{Lambert2021}
    Michael Lambert.
    \href{http://www.tac.mta.ca/tac/volumes/37/22/37-22abs.html}
    {Discrete double fibrations}.
    \textit{Theory and Applications of Categories},
    Vol.~37
    (2021).

\bibitem{Manuell2022}
    Graham Manuell.
    \href{https://doi.org/10.1007/s00233-022-10294-2}
    {Monoid extensions and the Grothendieck construction}.
    \textit{Semigroup Forum},
    Vol.~105
    (2022).

\bibitem{MoellerVasilakopoulou2020}
    Joe Moeller and Christina Vasilakopoulou.
    \href{http://www.tac.mta.ca/tac/volumes/35/31/35-31abs.html}
    {Monoidal Grothendieck construction}.
    \textit{Theory and Applications of Categories},
    Vol.~35
    (2020).

\bibitem{Myers2021}
    David Jaz Myers.
    \href{https://doi.org/10.4204/EPTCS.333.11}
    {Double Categories of Open Dynamical Systems (Extended Abstract)}.
    \textit{Electronic Proceedings in Theoretical Computer Science},
    Vol.~333
    (2021).

\bibitem{Pare2011}
    Robert Paré.
    \href{http://www.tac.mta.ca/tac/volumes/25/17/25-17abs.html}
    {Yoneda theory for double categories}.
    \textit{Theory and Applications of Categories},
    Vol.~25 
    (2011).

\bibitem{Pare2024}
    Robert Paré.
    \href{http://www.tac.mta.ca/tac/volumes/40/5/40-05abs.html}
    {Retrocells}.
    \textit{Theory and Applications of Categories},
    Vol.~40
    (2024).

\bibitem{PavlovicAbramsky1997}
    D. Pavlović and S. Abramsky.
    \href{https://doi.org/10.1007/BFb0026986}
    {Specifying interaction categories}.
    \textit{Lecture Notes in Computer Science},
    Vol.~1290
    (1997).

\bibitem{Shulman2008}
    Michael Shulman.
    \href{http://www.tac.mta.ca/tac/volumes/20/18/20-18abs.html}
    {Framed bicategories and monoidal fibrations}.
    \textit{Theory and Applications of Categories},
    Vol.~20
    (2008).

\bibitem{SpivakGarnerFairbanks2025}
    David I. Spivak, Richard Garner, and  
    Aaron David Fairbanks.
    \href{https://doi.org/10.1016/j.jpaa.2025.107883}
    {Functorial aggregation}.
    \textit{Journal of Pure and Applied Algebra},
    Vol.~229
    (2025).

\bibitem{Street1974}
    Ross Street.
    \href{https://doi.org/10.1007/BFb0063102}
    {Fibrations and Yoneda’s lemma in a 2-category}.
    \textit{Lecture Notes in Mathematics},
    Vol.~420
    (1974).
        
\end{thebibliography}
\end{document}